\let\equation=\gather
\let\endequation=\endgather
\numberwithin{equation}{section}
\renewcommand*{\@fnsymbol}[1]{\ensuremath{\ifcase#1\or 1\or 2\or
   3\else\@ctrerr\fi}}
\newtheorem{theorem}{Theorem}[section]
\newtheorem{corollary}[theorem]{Corollary}
\newtheorem{lemma}[theorem]{Lemma}
\newtheorem{proposition}[theorem]{Proposition}
\theoremstyle{definition}
\newtheorem{definition}[theorem]{Definition}
\theoremstyle{remark}
\newtheorem{remark}[theorem]{Remark}
\newcounter{assum}
\newenvironment{assum}[1][]{\ifx\newenvironment#1\newenvironment\refstepcounter{assum}\fi\equation\tag{A\theassum#1}}{\endequation}
\DeclareMathOperator*{\esssup}{esssup}
\newcommand{\R}{\mathbb{R}}
\newcommand{\X}{{\mathbb{R}^d}}
\newcommand{\N}{\mathbb{N}}
\newcommand{\eps}{\varepsilon}
\newcommand{\la}{\lambda}
\newcommand{\La}{\Lambda}
\newcommand{\kl}{\varkappa_{{\ell}}}
\newcommand{\kn}{\varkappa_{{n\ell}}}
\newcommand{\kam}{\varkappa^-}
\newcommand{\kap}{\varkappa^+}
\newcommand{\x}{\mathcal{X}}
\newcommand{\1}{1\!\!1}
\newcommand{\A}{{\mathfrak{a}}}
\newcommand{\Buc}{C_{ub}(\X)} 
\newcommand{\M}{{\mathcal{M}_\theta}(\R)}
\newcommand{\Utheta}{{U_\theta}}
\newcommand{\Xinf}{\mathcal{X}_{\infty}}
\newcommand{\locun}{\xRightarrow{\,\mathrm{loc}\ }}
\newcommand{\Y}{\mathcal{U}}
\DeclareFontFamily{U}{mathx}{\hyphenchar\font45}
\DeclareFontShape{U}{mathx}{m}{n}{
      <5> <6> <7> <8> <9> <10>
      <10.95> <12> <14.4> <17.28> <20.74> <24.88>
      mathx10
      }{}
\DeclareSymbolFont{mathx}{U}{mathx}{m}{n}
\DeclareMathAccent{\widecheck}{0}{mathx}{"71} 
\title{Doubly nonlocal Fisher--KPP equation: Existence and properties of traveling waves}
\author{Dmitri Finkelshtein\thanks{Department of Mathematics,
Swansea University, Singleton Park, Swansea SA2 8PP, U.K. ({\tt d.l.finkelshtein@swansea.ac.uk}).} \and Yuri Kondratiev\thanks{Fakult\"{a}t
f\"{u}r Mathematik, Universit\"{a}t Bielefeld, Postfach 110 131, 33501 Bielefeld,
Germany ({\tt kondrat@math.uni-bielefeld.de}).} \and Pasha Tkachov\thanks{Gran Sasso Science Institute, Viale Francesco Crispi, 7, 67100 L'Aquila AQ, Italy ({\tt pasha.tkachov@gssi.it}).}}
\begin{document}
\maketitle

\begin{abstract}
We consider a reaction-diffusion equation with nonlocal anisotropic diffusion and a linear combination of local and nonlocal monostable-type reactions in a space of bounded functions on $\X$. Using the properties of the corresponding semiflow, we prove the existence of monotone traveling waves along those directions where the diffusion kernel is exponentially integrable. Among other properties, we prove continuity, strict monotonicity and exponential integrability of the traveling wave profiles.

\textbf{Keywords:} nonlocal diffusion, reaction-diffusion equation, Fisher--KPP equation,  traveling waves, nonlocal nonlinearity, anisotropic kernels, integral equation

\textbf{2010 Mathematics Subject Classification:} 35C07, 35K57, 45G10

\end{abstract}

\section{Introduction} 
\subsection{Description of equation}
We will study the following initial value problem
\begin{equation}
    \begin{cases}
      \dfrac{\partial u}{\partial t}(x,t)=\kap(a^{+}*u)(x,t)-m u(x,t)-u(x,t) (Gu)(x,t),\quad t>0,\\[3mm]
          u(x,0)=u_0(x),
    \end{cases}
\label{eq:basic}
\end{equation}
with 
\begin{equation}\label{eq:defofG}
    (Gu)(x,t):=\kl u(x,t) + \kn ( a^-*u )(x,t),
\end{equation}
which generates a semi-flow $u(\cdot,0)\longmapsto u(\cdot,t)$, $t>0$, in a class of bounded nonnegative functions on $\X$, $d\geq1$. Here 
$\kap, m>0$ and $\kl, \kn\geq0$  are constants, such that
\begin{equation}\label{eq:kaminusissum}
    \kam := \kl+\kn > 0;
\end{equation}
and the functions  $0\leq a^\pm \in L^{1}(\X)$ are probability densities, i.e.
\begin{equation}\label{normed}
  \int_{\X }a^+(y)dy=\int_{\X }a^-(y)dy=1.
\end{equation}
The symbol $*$ denotes the convolution with respect to the space variable, i.e. 
\[
(a^\pm*u)(x,t):=\int_{\X }a^\pm(x-y)u(y,t)dy.
\]

The solution $u=u(x,t)$ describes the local density of a species at the point $x\in\X$ at the moment of time $t\geq0$. The individuals of the species spread over the space $\X$ according to the dispersion kernel $a^+$ and the fecundity rate $\kap$. The individuals may die according to both constant mortality rate $m$ and density dependent competition, described by the rate $\kam$. The competition may be \emph{local}, when the density $u(x,t)$ at a point $x$ is influenced by itself only, with the rate $\kl$, or \emph{nonlocal}, when the density $u(x,t)$ is influenced by all values $u(y,t)$, $y\in\X$, averaged over $\X$ according to the competition kernel $a^-$ with the rate $\kn$.  

For the case $\beta:=\kap -m>0$, the equation \eqref{eq:basic} can be rewritten in the reaction-diffusion form 
\begin{equation}\label{eq:RDform}
\begin{aligned}
    \dfrac{\partial u}{\partial t}(x,t)&=\kap \int_\X a^+(x-y)\bigl(u(y,t)-u(x,t)\bigr)\,dy\\&\quad +u(x,t)\bigl(\beta - (Gu)(x,t)\bigr).
\end{aligned}
\end{equation}
The first summand here describes a non-local diffusion generator, see e.g. \cite{AMRT2010} (also known as the generator of a continuous time random walk in $\X$ or of a compound Poisson process on $\X$). As a result, the solution $u$ to \eqref{eq:RDform} may be interpreted as a density of a species which invades according to a nonlocal diffusion within the space $\X$ meeting a reaction $Fu:=u(\beta -Gu)$; see e.g. \cite{Fif1979,Mur2003,SK1997}.

The non-local diffusion in reaction-diffusion equations first appeared (for the case $d=1$) in the seminal paper \cite{KPP1937} by  Kolmogorov, Petrovsky and Piskunov, to~describe a dynamics where individuals move during the time between birth and reproduction meeting a local reaction $Fu=f(u)=u(1-u)^2$. Using a diffusive scaling, the equation in \cite{KPP1937} was informally transformed to
\begin{equation}\label{kpp}
 \dfrac{\partial u}{\partial t}(x,t)=\alpha \Delta u(x,t)+f\bigl(u(x,t)\bigr),
\end{equation}  
where $\Delta$ denotes the Laplace operator, $\alpha>0$. The choice of the local reaction $f(u)=u(1-u)^2$ was motivated by a discrete genetic model. The equation \eqref{kpp} was studied in \cite{KPP1937}, for a class of reactions which includes also, in particular,  
\[
  f(u)=u(1-u)
\]
that corresponds to $\kn=0$, $\kl=1$, $\beta=1$ in \eqref{eq:defofG} and \eqref{eq:RDform}. The latter reaction was early considered by Fisher in \cite{Fis1937} for another genetic model. The Fisher--KPP equation \eqref{kpp} has been actively studied and generalized since then, see e.g. \cite{AW1978,HR2016,Saa2003} and references therein. 

Later, the equation \eqref{eq:RDform} with  local $G$, i.e.~with $\kn=0$ in \eqref{eq:defofG}, was considered in \cite{Sch1980} (motivated by an analogy to Kendall's epidemic model) and has been actively studied  in the last decade, see e.g. \cite{CD2007,Yag2009,AGT2012,CDM2008a,Gar2011,LSW2010,SLW2011,CY2017} for $d=1$ and \cite{CDM2008,SZ2010} for $d\geq1$.

The equation \eqref{eq:RDform} with pure nonlocal $G$, i.e.~with $\kl=0$, $\kam =\kn$ in \eqref{eq:defofG}, first appeared, for the case $\kap  a^+=\kam  a^-$, $m=0$, in \cite{Mol1972a,Mol1972}. Next, it was derived from a lattice `crabgrass model', for the case $\kap  a^+=\kam  a^-$, $m>0$ in \cite{Dur1988} and latter considered in \cite{PS2005}. 

Note also that, in the pure nonlocal case $\kl=0$, the microscopic (individual-based) model of spatial ecology corresponding to the equation \eqref{eq:basic} was proposed by Bolker and Pacala in \cite{BP1997}. In~this case, the equation \eqref{eq:basic} can be rigorously derived in a proper scaling limit of the corresponding multi-particle evolution; see \cite{FM2004} for integrable species densities and \cite{FKK2011a,FKKozK2014} for bounded ones.

In the present paper, we consider a unified approach to both local and nonlocal competition terms in \eqref{eq:basic}. 

\subsection{Description of results}
	Clearly, $u\equiv0$ is a constant stationary solution to \eqref{eq:basic}. We will assume in the sequel that
\begin{assum}\label{as:chiplus_gr_m}
	\kap >m.
\end{assum}
Then the equation \eqref{eq:basic} has the unique positive constant stationary solution $u\equiv\theta$, where
\begin{equation}\label{theta_def}
  \theta:=\frac{\kap -m}{\kam }>0.
\end{equation}

Our primary object of investigation are monotone traveling waves, which connect $0$ and $\theta$. Let $\M$ denote the set of all decreasing and right-continuous functions $f:\R\to[0,\theta]$. By a (monotone) traveling wave solution to \eqref{eq:basic} in a direction $\xi\in S^{d-1}$ (the unit sphere in $\X$), we will understand a solution of the form 
\begin{equation}\label{eq:deftrw}
      \begin{gathered}
      u(x,t)=\psi(x\cdot\xi-ct),  \quad t\geq0, \ \mathrm{a.a.}\ x\in\X, \\
      \psi(-\infty)=\theta, \qquad \psi(+\infty)=0,
      \end{gathered}
\end{equation}
where $c\in\R$ is called the speed of the wave and the function $\psi\in\M$
is called the profile of the wave.
Here and below $x\cdot \xi$ denotes the scalar product in $\X$. Such solutions are also called in literature as traveling planes, see e.g. \cite{Eva2010}. 

Define the function
\begin{equation}\label{diffofkernels}
  J_\theta(x):=\kap a^+(x)-\kn \theta a^-(x),\quad x\in\X.
\end{equation}
For a fixed $\xi\in S^{d-1} $, we introduce the following assumptions:
\begin{assum}\label{as:aplus_gr_aminus-intro}
      \int_{\{x\cdot\xi=s\}} J_\theta(x)\,dx\geq0 \quad \text{for a.a. } s\in\R,
\end{assum}
cf. \eqref{apm1dim}, \eqref{acheckpos} below, and 
\begin{assum}\label{aplusexpint1}
  \text{there exists} \ \mu=\mu(\xi)>0 \ \text{such that} \ \int_\X a^+(x) e^{\mu \, x\cdot \xi}\,dx<\infty.
\end{assum}
Stress that assumption \eqref{as:aplus_gr_aminus-intro} is redundant for the case of the local $G$, when $\kn=0$, i.e. for the case of the local reaction $Fu=f(u)=u(\beta -\kl u)$.

We will also use the following counterpart of \eqref{as:aplus_gr_aminus-intro}: there exist $\rho,\delta>0$ (depending on $\xi$), such that
          \begin{assum}\label{as:aplus-aminus-is-pos1d}
    \int_{\{x\cdot\xi=s\}} J_\theta(x)\,dx \geq\rho  \text{ \ for a.a. } |s|\leq \delta.
\end{assum}
The following theorem is the main result of the article.
\begin{theorem}\label{thm:trwexist}
Let $\xi\in S^{d-1}  $ be fixed, and suppose that \eqref{as:chiplus_gr_m}, \eqref{as:aplus_gr_aminus-intro}, \eqref{aplusexpint1} hold.
Then there exists $c_*(\xi)\in\R$, such that for any $c<c_*(\xi)$,  a traveling wave solution to \eqref{eq:basic} of the form \eqref{eq:deftrw} with $\psi\in\M$ does not exist; whereas, for any $c\geq c_*(\xi)$, 
\begin{enumerate}[label={\arabic*})]
          \item there exists a traveling wave solution to \eqref{eq:basic} with the speed $c$ and a profile $\psi\in\M$ such that  \eqref{eq:deftrw} holds;
          \item if $c\neq0$, then the profile $\psi\in C_b^\infty(\R)$ (the class of infinitely many times differentiable functions on $\R$ with bounded derivatives); if $c=0$ (in the case $c_*(\xi)\leq 0$), then $\psi\in C(\R)$;
       		\item there exists $\mu=\mu( c, a^+, \kam, \theta)>0$ such that
              \begin{equation}\label{eq:trwexpint}
                \int_\R\psi(s)e^{\mu s}\,ds<\infty;
              \end{equation}
          \item let \eqref{as:aplus-aminus-is-pos1d} hold, then the profile $\psi$ is a strictly decreasing function on $\R$;
					\item let \eqref{as:aplus-aminus-is-pos1d} hold, then, for any $c\neq0$, there exists $\nu>0$, such that $\psi(t)e^{\nu t}$ is a strictly increasing function.
\end{enumerate}
\end{theorem}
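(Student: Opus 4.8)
\emph{Strategy.}
The plan is to recast the profile equation as an order-preserving fixed-point problem, using the comparison principle of the semiflow, and then to extract items 2)--5) from the resulting integral identity. Substituting $u(x,t)=\psi(x\cdot\xi-ct)$ into \eqref{eq:basic} and integrating out the directions orthogonal to $\xi$ replaces the $d$-dimensional convolutions by one-dimensional ones against the marginals $a^\pm_\xi(r):=\int_{\{x\cdot\xi=r\}}a^\pm(x)\,dx$, giving $-c\psi'=\kap(a^+_\xi*\psi)-m\psi-\psi\,(G_\xi\psi)$ with $(G_\xi\psi)(r)=\kl\psi(r)+\kn(a^-_\xi*\psi)(r)$; in these terms \eqref{as:aplus_gr_aminus-intro} reads $\kap a^+_\xi\ge\kn\theta\,a^-_\xi$ a.e., and \eqref{as:aplus-aminus-is-pos1d} that this difference is $\ge\rho$ on $[-\delta,\delta]$. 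Linearising at $0$ gives the characteristic function $\mathcal H(\la,c):=\kap\int_\R a^+_\xi(r)e^{\la r}\,dr-m-c\la$, which by \eqref{aplusexpint1} is finite and convex on a maximal interval $[0,\mu_0)$ with $\mathcal H(0,c)=\kap-m>0$ by \eqref{as:chiplus_gr_m}; one puts $c_*(\xi):=\inf\{c\in\R:\mathcal H(\la,c)\le0\text{ for some }\la\in(0,\mu_0)\}$, which is finite. For nonexistence: if $\psi\in\M$ were a wave of speed $c$, its bilateral Laplace transform $\widehat\psi(\la)=\int_\R\psi(s)e^{\la s}\,ds$ (finite for small $\la>0$ since $\psi\le\theta$) would satisfy $\mathcal H(\la,c)\widehat\psi(\la)=\widehat{\psi G_\xi\psi}(\la)\ge0$ on its domain, whence a Tauberian/Ikehara-type analysis of its abscissa of convergence forces that abscissa to be a zero of $\mathcal H(\cdot,c)$ in $(0,\mu_0)$, impossible when $c<c_*(\xi)$.

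For existence when $c\ge c_*(\xi)$ I would fix $\kappa$ large enough that $Bv:=\kappa v-mv-v\,(G_\xi v)+\kap(a^+_\xi*v)$ is order-preserving on $\M$ --- this is exactly where \eqref{as:aplus_gr_aminus-intro} is used, so that the nonlocal contributions $\kap(a^+_\xi*v)-\kn\theta(a^-_\xi*v)$ combine into something monotone --- and invert the transport term: for $c>0$, $\psi=\Phi\psi$ with $(\Phi v)(s):=c^{-1}\int_s^{\infty}e^{-\frac{\kappa}{c}(t-s)}(Bv)(t)\,dt$, and the mirror formula with $\int_{-\infty}^s$ for $c<0$; $\Phi$ is monotone, maps $\M$ into itself, and fixes $0$ and $\theta$. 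I would take the supersolution $\bar\psi(s)=\min\{\theta,e^{-\la s}\}$ with $\la\in(0,\mu_0)$ a root of $\mathcal H(\cdot,c)$, and a subsolution obtained by perturbing $e^{-\la s}$ downward (positive only on a half-line), which is available precisely when $c>c_*(\xi)$, since then $\mathcal H(\cdot,c)$ has a second root leaving room for the perturbation. Monotone iteration of $\Phi$ starting from $\bar\psi$ then produces a profile $\psi\in\M$ squeezed between the two, with $\psi(+\infty)=0$ and $\psi(-\infty)$ equal to a constant stationary value; the subsolution being nontrivial forces $\psi(-\infty)\in(0,\theta]$, hence $\psi(-\infty)=\theta$ by \eqref{as:chiplus_gr_m}. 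For $c=c_*(\xi)$, and for $c=0$ when $c_*(\xi)\le0$, I would take $c_n\downarrow c_*(\xi)$, normalise the waves $\psi_n$ by translation so that $\psi_n(0)=\theta/2$, extract a pointwise limit $\psi\in\M$ via Helly's selection theorem --- the normalisation keeping $\psi\ge\theta/2$ on $(-\infty,0)$ and $\le\theta/2$ on $(0,\infty)$, so that $\psi$ is neither $0$ nor $\theta$ --- and pass to the limit in the integral identity by dominated convergence; when $c=0$ one works instead with the algebraic form $\psi=\kap(a^+_\xi*\psi)/(m+G_\xi\psi)$ of the profile equation.

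The finer properties follow from the integral identity. For 2), when $c\ne0$ the kernel $e^{-\frac{\kappa}{|c|}|t-s|}$ smooths, so $\psi\in C^1_b(\R)$, and since $B$ maps each $C^k_b(\R)$ into itself (convolution with $L^1$ and products of bounded $C^k$ functions do), bootstrapping gives $\psi\in C^\infty_b(\R)$; when $c=0$ continuity is read off the quadratic relation $\kl\psi^2+(m+\kn a^-_\xi*\psi)\psi=\kap\,a^+_\xi*\psi$, whose coefficients are continuous because $L^1*L^\infty\subset C_b(\R)$. For 3), the identity $\mathcal H(\la,c)\widehat\psi(\la)=\widehat{\psi G_\xi\psi}(\la)$ with $c\ge c_*(\xi)$ shows $\widehat\psi$ has a strictly positive abscissa of convergence $\mu$ depending only on $c,a^+,\kam,\theta$, which is \eqref{eq:trwexpint}. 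For 4), under \eqref{as:aplus-aminus-is-pos1d} a flat piece of $\psi$ would, through the profile equation and the bound $\kap a^+_\xi-\kn\theta a^-_\xi\ge\rho$ on $[-\delta,\delta]$, propagate to the left and right and force $\psi$ to be constant on $\R$, contradicting the boundary conditions --- a strong-maximum-principle argument. For 5), for $c\ne0$ a sliding argument (using that $e^{\nu h}\psi(\cdot+h)$ overcompetes, hence is a supersolution of the profile equation, whenever $\nu$ does not exceed the decay rate of $\psi$ at $+\infty$) shows $e^{\nu t}\psi(t)$ is nondecreasing, and \eqref{as:aplus-aminus-is-pos1d} upgrades this to strictly increasing via the same strong-maximum-principle mechanism.

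\textbf{Main obstacle.} I expect the genuine difficulty to be the existence half at and just above $c_*(\xi)$: because the equation is doubly nonlocal there is no gain of compactness from the equation itself, so the whole construction must be pushed through the order structure and the sign condition \eqref{as:aplus_gr_aminus-intro} --- which has to simultaneously make $B$ monotone and permit a usable positive subsolution --- while the non-degeneracy of the limit profile exactly at $c=c_*(\xi)$ rests on the normalisation argument, and the borderline case $c_*(\xi)=0$ forces one to pass between the two distinct integral reformulations of the profile equation.
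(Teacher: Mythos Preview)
Your overall architecture is a genuinely different route from the paper's. The paper does \emph{not} build the wave by monotone iteration between sub- and super-solutions of the profile equation; instead it reduces to $d=1$ (your marginals $a^\pm_\xi$ are the paper's $\widecheck a^\pm$), verifies that the time-$t$ map $\widetilde Q_t$ of the one-dimensional equation satisfies the order/translation/continuity properties (Q1)--(Q6), exhibits the single super-solution $\varphi(s)=\theta\min\{e^{-\mu s},1\}$ so that $\widetilde Q_1\varphi(\cdot+c)\le\varphi$ for $c=(\kap\A_\xi(\mu)-m)/\mu$, and then invokes Yagisita's abstract theorem to get both existence for $c\ge c_*$ and the dichotomy at $c_*$. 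The advantage of that route is that no subsolution is needed and $c_*$ is produced as an abstract threshold; your approach, by contrast, \emph{identifies} $c_*$ with $\inf_{\la}\la^{-1}(\kap\A_\xi(\la)-m)$, which the paper explicitly does not prove here (it only shows the inequality $c_*\le\inf_\la(\cdots)$ and defers equality to a companion paper). Similarly, for item~2) the paper does not use the smoothing kernel $e^{-\kappa|t-s|/|c|}$: it first shows $\psi\in C^1$ by a delicate comparison of $\psi$-increments with $\partial_t\phi$ for the semiflow solution $\phi(s,t)=\psi(s-ct)$, and only then bootstraps via the profile equation; for item~3) it proceeds by direct integral estimates \`a la Coville--Dupaigne (first $\psi\in L^1(\R_+)$, then a cut-off weighted integration), not via the Laplace identity; for item~5) it argues by explicit manipulation of \eqref{eq:trw} with $w(s)=\psi(s)e^{-\mu s}$, not by sliding.

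There is, however, a genuine gap in your nonexistence argument. You write that, for a putative wave $\psi$, the Laplace transform $\widehat\psi(\la)=\int_\R\psi(s)e^{\la s}\,ds$ is finite for small $\la>0$ ``since $\psi\le\theta$''. Boundedness gives nothing on $\int_0^\infty\psi(s)e^{\la s}\,ds$; you need exponential decay of $\psi$ at $+\infty$, which is exactly item~3). So your Ikehara-type argument presupposes the very estimate \eqref{eq:trwexpint} you later list as a consequence, and that estimate in turn requires a nontrivial proof independent of the Laplace identity (the paper devotes Proposition~3.9 to it). A second, related issue is the subsolution: in the doubly nonlocal case $\kn>0$ the standard perturbation $e^{-\la s}-Me^{-(\la+\eps)s}$ is not obviously a subsolution of the profile equation, because the nonlocal competition term $-\kn\psi(a^-_\xi*\psi)$ does not have a sign that helps; the paper sidesteps this entirely via Yagisita's framework, which needs only the super-solution. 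If you pursue your route you must either supply a subsolution that genuinely handles the $a^-_\xi$-term under \eqref{as:aplus_gr_aminus-intro} alone, or replace the nonexistence argument by one that does not start from finiteness of $\widehat\psi$.
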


\begin{remark}
The last two items of Theorem~\ref{thm:trwexist} will be proven in Propositions~\ref{prop:psidecaysstrictly} and \ref{prop:trw_willbe_incr} below under assumptions weaker than \eqref{as:aplus-aminus-is-pos1d}.
\end{remark}

\begin{remark}
The results of \cite{FKT100-3, FT2017c} show that the assumption \eqref{aplusexpint1} is `almost' necessary to have traveling wave solutions in the equation \eqref{eq:basic}.
\end{remark}

By a solution to \eqref{eq:basic} on $[0,T)$, $T\leq \infty$, we will understand the so-called classical solution, that is a mapping from $[0,T)$ to a Banach space $E$ of bounded functions on $\X$ which is continuous in $t\in[0,T)$, continuously differentiable (in the sense of the norm in $E$) in $t\in(0,T)$, and satisfies \eqref{eq:basic}. The space $E$ is either the space $L^{\infty}(\X)$
of  essentially bounded (with respect to the Lebesgue measure) functions on $\X$ with $\esssup$-norm, or its Banach subspaces $C_b(\X)$ or $\Buc$ of bounded continuous or, respectively, bounded uniformly continuous functions on $\X$ with $\sup$-norm.

Consider, according to \eqref{eq:defofG}, the mapping 
\begin{equation}\label{eq:defofmapG}
    Gu:=\kl u+ \kn a^-*u, \quad u\in E.
\end{equation}
Clearly, $G$ maps $E$ to $E$ and preserves the cone $\{0\leq u\in E\}$. Here and below, all point-wise inequalities for functions from $E$ we will consider, for the case $E=L^\infty(\X)$, almost everywhere only. Moreover, the mapping $G$ is globally Lipschitz on~$E$. In particular, it satisfies the conditions of \cite[Theorem 2.2]{FT2017a} that can be read, in our case, as follows.
  \begin{theorem}[{cf.~\cite[Theorems 2.2, 3.3]{FT2017a}}]\label{thm:existuniq}
Let $0\leq a^\pm\in L^1(\X)$, $m>0$, $\kl,\kn\geq0$ be such that \eqref{eq:kaminusissum} and \eqref{normed} hold.
Then, for any $0\leq u_0\in E$ and for any $T>0$, there exists a unique classical solution $u$ to \eqref{eq:basic} on $[0,T)$. In particular, $u$ is a unique classical solution to \eqref{eq:basic} on $[0,\infty)$.
\end{theorem}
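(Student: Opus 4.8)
The plan is to realise \eqref{eq:basic} as a particular case of the abstract semilinear Cauchy problem treated in \cite[Theorem 2.2]{FT2017a}, and then to exclude blow-up by an elementary a~priori estimate that exploits the dissipativity of the competition term on the nonnegative cone.

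First I would write \eqref{eq:basic} as $u'(t)=F(u(t))$, $u(0)=u_0$, in $E$, with $F(u):=\kap\,a^+*u-mu-u\,Gu$ and $G$ as in \eqref{eq:defofmapG}. The two points to verify are that $F$ maps $E$ into $E$ and is locally Lipschitz. For the first: for $a\in L^1(\X)$, convolution with $a$ maps $L^\infty(\X)$ boundedly into $\Buc$ (boundedness is Young's inequality; uniform continuity follows from $\|a*v(\cdot+h)-a*v\|_\infty\le\|v\|_\infty\,\|a(\cdot+h)-a\|_{L^1}\to0$ as $h\to0$), hence each of $L^\infty(\X)$, $C_b(\X)$, $\Buc$ into itself; since all three are Banach algebras under pointwise multiplication, the quadratic term $u\,Gu$ again lies in $E$, so $F\colon E\to E$. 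For the second: $u\mapsto\kap a^+*u-mu$ is bounded linear, $\|G\|\le\kl+\kn=\kam$ by \eqref{normed}, and the bilinear bound $\|u\,Gu-v\,Gv\|_E\le\kam\,(\|u\|_E+\|v\|_E)\,\|u-v\|_E$ shows $F$ is Lipschitz on bounded sets. Thus \cite[Theorem 2.2]{FT2017a} applies and yields, for every $u_0\in E$, a unique maximal classical solution $u$ on some $[0,T_{\max})$ together with the continuation criterion: $T_{\max}<\infty$ forces $\|u(t)\|_E\to\infty$ as $t\uparrow T_{\max}$. (Since $F$ is everywhere defined and continuous, the identity $u(t)=u_0+\int_0^tF(u(s))\,ds$ in fact gives $u\in C^1([0,T_{\max}),E)$.)

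Next, for $0\le u_0\in E$ I would show $0\le u(t)$ on $[0,T_{\max})$. On any subinterval where $\|u\|_E\le R$, set $C:=m+\kam R$ and split $F(u)=-Cu+N(u)$ with $N(u)=\kap a^+*u+u\bigl(C-m-\kl u-\kn a^-*u\bigr)$. Since $a^\pm*v\ge0$ for $0\le v\le R$ and then the bracket is $\ge C-m-\kam R\ge0$, the map $N$ sends $\{0\le v\le R\}$ into the nonnegative cone; as $e^{-Ct}\ge0$, the Picard iterates of $u(t)=e^{-Ct}u_0+\int_0^te^{-C(t-s)}N(u(s))\,ds$ stay nonnegative, hence so does their limit $u$ (this is exactly the comparison with the stationary subsolution $0$ in \cite[Theorem 3.3]{FT2017a}), and bootstrapping over consecutive subintervals gives $u\ge0$ on $[0,T_{\max})$. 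With $u\ge0$ the terms $-\kl u^2$ and $-\kn u\,(a^-*u)$ are $\le0$ and $\kap\,a^+*u\le\kap\|u(t)\|_E$ pointwise, so $\partial_tu\le\kap\|u(t)\|_E-mu$; integrating $\tfrac{d}{dt}\bigl(e^{mt}u(t)\bigr)$ and passing to the norm in $E$ yields $e^{mt}\|u(t)\|_E\le\|u_0\|_E+\kap\int_0^te^{ms}\|u(s)\|_E\,ds$, whence by Gr\"onwall $\|u(t)\|_E\le\|u_0\|_E\,e^{(\kap-m)t}$ on $[0,T_{\max})$. This is finite on every bounded interval, so $T_{\max}=\infty$, and uniqueness on $[0,T)$ for finite $T$, and on $[0,\infty)$, follows.

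I do not expect a serious obstacle: once \eqref{eq:basic} is matched with the template of \cite[Theorem 2.2]{FT2017a}, local well-posedness is automatic and the rest is either routine or quoted. The most delicate step is the propagation of nonnegativity via the positivity-preserving splitting; the only conceptual point is that the quadratic nonlinearity, which a~priori could cause finite-time blow-up, in fact acts dissipatively on $\{u\ge0\}$, so a crude Gr\"onwall bound already rules blow-up out.
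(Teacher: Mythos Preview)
Your proposal is correct and follows the same overall route as the paper: verify that the nonlinearity fits the abstract framework of \cite[Theorem~2.2]{FT2017a} and then invoke that result. In the paper this is done in one line---the observation, just before the theorem, that the mapping $G$ in \eqref{eq:defofmapG} is globally Lipschitz on $E$---after which the statement is simply quoted from \cite{FT2017a}; you instead unpack the argument and supply the positivity plus a~priori bound directly.

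One small difference worth noting: the iteration used in \cite{FT2017a} (recalled in the paper around \eqref{eq:exist_uniq_BUC:Phi_v}--\eqref{eq:exist_uniq_BUC:B}) is the nonlinear Duhamel map
\[
(\Phi_\tau v)(x,t)=(Bv)(x,\tau,t)\,u_\tau(x)+\int_\tau^t(Bv)(x,s,t)\,\kap(a^+*v)(x,s)\,ds,
\]
with $Bv>0$, which is manifestly positivity-preserving and gives nonnegativity and the global bound in one stroke, without an auxiliary constant $C$ or a separate blow-up/Gr\"onwall step. Your linear splitting $F(u)=-Cu+N(u)$ achieves the same end by more standard means; just be careful that your phrasing ``on any subinterval where $\|u\|_E\le R$'' is not circular---the Picard iterates themselves must be shown to remain in $\{0\le v\le R\}$ for small time (choose $R>\|u_0\|_E$ and $\tau$ small), which is routine but should be said. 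With that caveat, both arguments are sound and yours has the virtue of being self-contained.
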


For any $t\geq0$ and $0\leq f\in L^\infty(\X)$, we define
\begin{equation}
  (Q_{t}f)(x):=u(x,t),\qquad \text{a.a. } x\in\X,\label{def:Q_T}
\end{equation}
where $u(x,t)$ is the solution to \eqref{eq:basic} with the initial condition $u(x,0)=f(x)$. 
From the uniqueness arguments and the proof of \cite[Theorems~2.2, 3.3]{FT2017a}, we imemdiately get that $(Q_t)_{t\geq0}$ constitutes a continuous semi-flow on the cone $\{0\leq f\in L^\infty(\X)\}$, i.e. $Q_t$ is continuous at $t=0$ and
\[
  Q_{t+s}f=Q_t(Q_s f), \quad t,s\geq0,
\]
for each $0\leq f\in L^\infty(\X)$.

It can be checked (see Proposition~\ref{prop:statsol} below) that $u\equiv 0$ is an unstable solution to \eqref{eq:basic} and that the following reinforced version of \eqref{as:aplus_gr_aminus-intro},
\begin{equation}\label{as:aplus_gr_aminus}\tag{\ref{as:aplus_gr_aminus-intro}${}'$}
  J_\theta(x)\geq 0,\quad \text{a.a.}\ x\in\X,
\end{equation}
is a sufficient condition to that $u\equiv\theta$ is a uniformly and asymptotically stable solution, in the sense of Lyapunov.

Similarly to above, the assumption \eqref{as:aplus_gr_aminus} is redundant for the case of the local $G$, when $\kn=0$, $Fu=f(u)=u(\beta -\kl u)$.

In \cite[Proposition 5.4]{FT2017a}, we considered properties of the semi-flow $Q_t$ generated by the equation \eqref{eq:basic}, cf.~\eqref{def:Q_T}, with a general $G$ which satisfies a list of conditions. We will show in Subsection~\ref{subsec:checkQ1-Q5} below, that $G$ given by \eqref{eq:defofmapG} fulfills these conditions, that will imply the items \ref{eq:QBtheta_subset_Btheta}--\ref{prop:Q_cont} of the following statement. We define the tube
\begin{equation}\label{eq:tube}
      E^+_\theta:=\{u\in E\mid 0\leq u\leq\theta\}.
\end{equation}
For the case $d=1$, we recall also that $\M$ denotes the set of all decreasing and right-continuous functions $f:\R\to[0,\theta]$, cf.~Remark~\ref{rem:inclus} below. 
\begin{theorem}[{{cf. \cite[Proposition 5.4]{FT2017a}}}]\label{thm:Qholds}
Let  \eqref{as:chiplus_gr_m} and \eqref{as:aplus_gr_aminus} hold. Let  $E=L^\infty(\X)$ and $(Q_t)_{t\geq0}$ be the semi-flow  \eqref{def:Q_T} on the cone $\{0\leq f\in L^\infty(\X)\}$. Then, for each $t>0$, $Q=Q_t$ satisfies the following properties:
\begin{enumerate}[label=\textnormal{(Q\arabic*)}]
    \item $Q$ maps each of sets $E^+_\theta$, $E^+_\theta\cap C_b(\X)$, $E^+_\theta\cap\Buc$ into itself; \label{eq:QBtheta_subset_Btheta}
    \item let $T_y$, $y\in\X$, be a translation operator, given by  \label{prop:QTy=TyQ}
      \begin{equation}\label{shiftoper}
        (T_y f)(x)=f(x-y), \quad x\in\X,
      \end{equation}
      then
      \begin{equation}
        (Q T_{y}f)(x)=(T_{y}Qf)(x), \quad x,y\in\X,\ f\in E^+_\theta;\label{eq:QTy=TyQ}
      \end{equation}
    \item $Q0=0$, $Q\theta=\theta$, and $Q r>r$, for any constant $r\in(0,\theta)$; \label{prop:Ql_gr_l}
    \item if $f,g\in E^+_\theta$, $f \leq g $, then $Qf \leq Qg$; \label{prop:Q_preserves_order}
    \item if $f_n,f\in E^+_\theta$, $f_{n}\locun f$, then $(Qf_{n})(x)\to (Qf)(x)$ for (a.a.) $x\in\X$; \label{prop:Q_cont}
    \item if $d=1$, then $Q:\M\to\M$.\label{prop:Q_Mtheta}
  \end{enumerate}
\end{theorem}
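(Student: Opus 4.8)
The plan is to verify that the mapping $Gu=\kl u+\kn\, a^-*u$ from \eqref{eq:defofmapG} satisfies the list of structural hypotheses on the competition operator imposed in \cite[Proposition~5.4]{FT2017a}, which then yields (Q1)--(Q5) directly, and to deduce (Q6) separately by a monotonicity-and-limit argument. First I would extract from \cite{FT2017a} the precise requirements on $G$; informally these amount to: $G$ is a bounded linear operator on $E$ that preserves the cone $\{u\ge0\}$, commutes with every translation $T_y$, sends each constant $r$ to the constant $\kam r$, is continuous with respect to the convergence $\locun$ on order-bounded sets, together with $\kap>m$ and a compatibility inequality between $\kap a^+$ and the kernel of $G$ which is exactly \eqref{as:aplus_gr_aminus}. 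The first several are immediate: linearity, boundedness (with norm $\kam$) and cone preservation hold because $0\le a^-\in L^1(\X)$; commutation with translations is the translation invariance of convolution, $a^-*(T_yu)=T_y(a^-*u)$; and $Gr=\kl r+\kn r=\kam r$ by the normalization \eqref{normed}. For the continuity property, if $0\le f_n\le\theta$ and $f_n\locun f$ then $(a^-*f_n)(x)\to(a^-*f)(x)$ for every $x$ by dominated convergence with dominating function $\theta\, a^-$, while $\kl f_n\to\kl f$ in the same sense; hence $Gf_n\locun Gf$ on $E^+_\theta$.

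The heart of the matter, and what I expect to be the main obstacle, is the compatibility condition: one must check that \eqref{as:aplus_gr_aminus}, i.e.\ $J_\theta=\kap a^+-\kn\theta a^-\ge0$, is precisely what renders the semi-flow order-preserving on the tube \eqref{eq:tube} and makes $u\equiv\theta$ an attracting upper equilibrium. Writing the right-hand side of \eqref{eq:basic} as $\kap\bigl(a^+*u-u\bigr)+u(\beta-Gu)$ shows that $0$ and $\theta$ are stationary (using $\kam\theta=\beta$), so invariance of the tube will follow from a comparison principle once quasi-monotonicity is in place. For $u_1\le u_2$ in $E^+_\theta$, the identity $-u_2Gu_2+u_1Gu_1=-u_2\,G(u_2-u_1)-(u_2-u_1)Gu_1$ together with $u_2\le\theta$ and $Gu_1\le\kam\theta=\beta$ gives, after collecting terms, $\mathrm{RHS}(u_2)-\mathrm{RHS}(u_1)\ge J_\theta*(u_2-u_1)-C(u_2-u_1)\ge -C(u_2-u_1)$ with $C=\kap+\kl\theta$, thanks to $J_\theta\ge0$; this is the one-sided (quasi-monotone) estimate required in \cite{FT2017a}. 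Property (Q3) uses in addition that any constant $r\in(0,\theta)$ is a strict subsolution, $r(\beta-\kam r)>0$, so the strict-comparison part of that proposition yields $Q_tr>r$, while $Q_t0=0$ and $Q_t\theta=\theta$ follow from uniqueness. Having checked all the hypotheses, \cite[Proposition~5.4]{FT2017a} delivers (Q1)--(Q5).

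Finally, for (Q6) (the case $d=1$) I would argue from (Q1), (Q2) and (Q4). Let $f\in\M$. For any $y>0$ one has $T_yf\ge f$ since $f$ is decreasing, whence $T_y(Qf)=Q(T_yf)\ge Qf$ by \eqref{eq:QTy=TyQ} and (Q4); thus $Qf$ coincides a.e.\ with a decreasing function, which is moreover $[0,\theta]$-valued by (Q1). Passing to the right-continuous modification of that function (which differs from it on an at most countable set, hence a.e.) yields the required representative in $\M$; alternatively, applying (Q5) to $T_{-y_n}f\locun f$ as $y_n\downarrow0$ — the translates $(T_{-y_n}f)(z)=f(z+y_n)$ increasing pointwise to $f(z^+)=f(z)$ and being uniformly bounded by $\theta$ — shows directly that $Qf$ is right-continuous at almost every point, hence, being monotone, at every point. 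Therefore $Q:\M\to\M$.
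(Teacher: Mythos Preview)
Your treatment of (Q1)--(Q5) is essentially identical to the paper's: both verify the hypotheses of \cite[Proposition~5.4]{FT2017a} by checking that $G$ commutes with translations, maps constants appropriately, is continuous under $\locun$, and---crucially---by deriving the quasi-monotone estimate $Hv-Hu\ge J_\theta*(v-u)-(\kap+\kl\theta)(v-u)$ from \eqref{as:aplus_gr_aminus}. Your algebra and constants match the paper's \eqref{eq:HisQuasiMon} exactly.

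For (Q6) the routes diverge. The paper does \emph{not} deduce it from (Q1)--(Q5); instead (see Proposition~\ref{prop:Qtilde}) it goes back to the Picard scheme \eqref{eq:exist_uniq_BUC:Phi_v}--\eqref{eq:exist_uniq_BUC:B} and observes that the maps $B$ and $\Phi_\tau$ send $\M$ into itself, so the limit $\widetilde Q_t\psi$ lies in $\M$ as well. Your argument is different and more conceptual: from (Q2)+(Q4) you get that $Qf$ agrees a.e.\ with a decreasing $[0,\theta]$-valued function, and then you pass to its right-continuous modification, invoking Remark~\ref{rem:inclus}. This is correct and arguably cleaner, since it avoids reopening the existence machinery. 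One caveat: your \emph{alternative} justification of right-continuity via (Q5), using $T_{-y_n}f\locun f$, is not quite sound---if $f\in\M$ has a jump, then $f(\cdot+y_n)\to f$ pointwise but not locally uniformly, so (Q5) does not apply. Fortunately your primary argument via the right-continuous modification already does the job, so this gap is harmless.
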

  Here and below $\locun$ denotes the locally uniform convergence of functions on $\X$ (in other words, $f_n\1_\La$ converge to $f\1_\La$ in $E$, for each compact $\La\subset\X$).  

The property \ref{eq:QBtheta_subset_Btheta} states that the solution $u(\cdot,t)$ remains in the tube $E_\theta^+$ for all $t>0$ if only $u(\cdot,0)$ is in this tube. In Remark~\ref{rem:necineq} below, we will show that, under \eqref{as:chiplus_gr_m}, the assumption \eqref{as:aplus_gr_aminus} is necessary to the fact that the set $E_\theta^+$ is invariant for $Q_t$, $t>0$.

The property \ref{prop:Q_preserves_order} means that the comparison principle holds for the solutions to \eqref{eq:basic}. Namely, if $u_1,u_2$ are classical solutions to \eqref{eq:basic} on $\R_+$ and $0\leq u_{1}(x,0)\leq u_{2}(x,0)\leq \theta$, $x\in\X$, then, for all $t\in\R_+$, (a.a.) $x\in\X$,
  \begin{equation}\label{eq:comparineq}
    0\leq u_{1}(x,t)\leq u_{2}(x,t) \leq \theta.
  \end{equation}
  See also Proposition~\ref{prop:fullcomp} below.

Our proof for the first part of Theorem~\ref{thm:trwexist} is based on an abstract result, for the case $d=1$, by Yagisita \cite{Yag2009} for a continuous semi-flow which satisfies \ref{prop:QTy=TyQ}--\ref{prop:Q_Mtheta} on $\M$ and has an appropriate super-solution (see Proposition~\ref{prop:trwexists} below for details). As an application, Yagisita considered a generalization of the equation \eqref{eq:basic} with a local $G$ in \eqref{eq:defofG}, i.e. with $\kn=0$ (and for $d=1$).

Early, in \cite{CDM2008}, it was shown how to reduce the study of traveling waves of the form \eqref{eq:deftrw} for the case $d>1$ to the study of the case $d=1$, cf. Proposition~\ref{prop:monot_sol} below; and,  for a continuous anisotropic kernel $a^+$ and for also a generalization of a local $G$ in \eqref{eq:defofG}, the traveling waves for \eqref{eq:basic} were studied using the technique of sub- and super-solutions; see also \cite{SLW2011}. For  generalizations in the case of local reaction depending on space variable (i.e. $\kn=0$ and  $\kl, m$ depend on $x$), see e.g. \cite{LZ2016,S-S2016}

The case of a nonlocal $G$ in \eqref{eq:basic}--\eqref{eq:defofG} appeared more difficult for analysis. The only known results for the case $\kn\neq 0$ in \eqref{eq:defofG} were obtained in \cite{YY2013,WZ2006} for the case of a symmetric quickly decaying kernel $a^+$, the latter mean that the integral in \eqref{aplusexpint1} is finite \emph{for all} $\mu>0$.

In the present paper, we will find an upper estimate for $c_*(\xi)$, see \eqref{cstarestimate} and \eqref{aplusexpla} below. 
Note that the present and forthcoming papers \cite{FKT100-2,FKT100-3} are based on our unpublished preprint \cite{FKT2015} and thesis \cite{Tka2017}. In particular, in \cite{FKT100-2}, we will prove that the estimate \eqref{cstarestimate} is, as a matter of fact, equality, namely,
\[
  c_*(\xi)=\min_{\la>0} \frac{1}{\la}\biggl(\kap \int_\X a^+(x) e^{\la x\cdot \xi}\,dx -m \biggr).
\]
(that coincides with the result in \cite{CDM2008} for $\kn=0$). We will find also in \cite{FKT100-2} the exact asymptotic of the profile $\psi$ at $\infty$, that implies, in particular, \eqref{eq:trwexpint}. Note that, the quite technical result \eqref{eq:trwexpint} is crucial for the analysis of traveling waves used in \cite{FKT100-2} which is based on the usage of the Laplace transform. 

It is worth noting also  that, in \cite{Wei1982a}, Weinberger considered spreading speeds of a discrete-time dynamical system $u_{n}=Qu_{n-1}$ constructed by a mapping $Q$ on $E=C_b(\X)$ which satisfies the properties \ref{eq:QBtheta_subset_Btheta}--\ref{prop:Q_cont}. He has also obtained results about a traveling wave solution (in discrete time), however, under an additional assumption that $Q$ is a compact mapping on $E=C_b(\X)$ in the topology of the locally uniform convergence. The traveling wave appeared the limit of a subsequence of appropriately chosen sequence $(u_n)_{n\in\N}$.  However, for the equation $\eqref{eq:basic}$, it is unclear how to check whether the operator $Q=Q_t$, given by \eqref{def:Q_T}, is compact on $E=C_b(\X)$ even for the local $G$ in \eqref{eq:defofG} ($\kn=0$); and hence we can't apply Weinberger's results. On the other hand, Yagisita in \cite{Yag2009} has pointed out that, considering traveling waves \eqref{eq:deftrw} with monotone profiles $\psi$, the existence of the limit above follows from Helly's theorem, which implies that $Q$ is compact on $\M$ in the topology of the locally uniform convergence. Note also that a modification of Weinberger's results about spreading speeds for continuous time for the equation \eqref{eq:basic} with an arbitrary $u_0\in E_\theta^+$ will be considered in \cite{FKT100-3}.

The paper is organized as follows. In Section~\ref{sec:semiflow}, we check properties \ref{eq:QBtheta_subset_Btheta}--\ref{prop:Q_cont} of Theorem~\ref{thm:Qholds}, and prove the strong maximum principle for the case $E=\Buc$ (see Theorem~\ref{thm:strongmaxprinciple}, cf. e.g. \cite{CDM2008} for $\kn=0$). 
In Section~\ref{sec:tr-waves}, we prove \ref{prop:Q_Mtheta} (see Proposition~\ref{prop:Qtilde}) and Theorem~\ref{thm:trwexist}. 

\section{Properties of semi-flow}\label{sec:semiflow}

\subsection{Verification of properties \ref{eq:QBtheta_subset_Btheta}--\ref{prop:Q_cont}}\label{subsec:checkQ1-Q5}
To prove \ref{eq:QBtheta_subset_Btheta}--\ref{prop:Q_cont}, we are going to use \cite[Proposition 5.4]{FT2017a}. To this end, we will check the assumptions of the latter statement. Let the mapping $G$ be given by \eqref{eq:defofmapG}. Then, under \eqref{as:chiplus_gr_m}, we have, by \eqref{theta_def},
\begin{equation}\label{eq:oldA2}
    0=G0\leq Gv \leq G\theta=\kap -m, \quad v\in E_\theta^+,
\end{equation}
cf. \eqref{eq:tube}. Moreover, it is easy to see that
\begin{equation}\label{eq:oldA8}
    Gr<\kap -m, \quad r\in(0,\theta).
\end{equation}
Note also, that, for $T_y$, $t\in\X$ given by \eqref{shiftoper}, we evidently have
\begin{equation}\label{eq:oldA7}
    (T_yGv)(x)=(GT_yv)(x), \quad x\in\X, \ v\in E_\theta^+.
\end{equation}

We denote also by
\begin{equation}\label{eq:defH}
    Hu:=\kap a^+*u-mu-u Gu
\end{equation}
the right hand side of the equation \eqref{eq:basic}. 

Let \eqref{as:aplus_gr_aminus} hold.
Then, for $u,v\in E_\theta^+$ with $u\leq v$, we have, by \eqref{theta_def}, \eqref{eq:defofmapG}, that $0\leq Gv\leq \kap -m$ and $Gv-Gu=\kl(v-u)+\kn a^-*(v-u)$, and hence
\begin{align*}
Hv-Hu&=\kap a^+*(v-u)-m(v-u)-(v -u)Gv-u(Gv-Gu)
\\&\geq J_\theta*(v-u)-(\kap +\theta \kl)(v -u). 
\end{align*}
Therefore, there exists $p=\kap +\theta \kl>0$, such that the operator $H$ is quasi-monotone on $E^+_\theta$, namely, 
\begin{equation}\label{eq:HisQuasiMon}
    Hu+pu\leq Hv+pv, \qquad u,v\in E_\theta^+, \ u\leq v.
\end{equation}

We will use also the following simple lemmas in the sequel.

\begin{lemma}\label{le:simple}
  Let $a\in L^1(\X)$, $f\in E$. Then $a*f\in\Buc$. Moreover, if $v\in C_b(I\to E)$, $I\subset\R_+$, then 
  $a*v\in C_b(I\to \Buc)$.
\end{lemma}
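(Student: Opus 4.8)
The plan is to treat the two assertions separately, both resting on standard properties of convolutions against $L^1$ kernels.

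For the first claim, I would show $a*f \in \Buc$ directly from the definition $(a*f)(x) = \int_\X a(x-y)f(y)\,dy$. Boundedness is immediate: $\lvert(a*f)(x)\rvert \le \lVert f\rVert_{L^\infty}\lVert a\rVert_{L^1}$. For uniform continuity, the key tool is continuity of translation in $L^1$: the map $z \mapsto T_z a$ is continuous from $\X$ into $L^1(\X)$. Then for $x, x' \in \X$,
\[
\lvert (a*f)(x) - (a*f)(x')\rvert = \left\lvert \int_\X \bigl(a(x-y) - a(x'-y)\bigr) f(y)\,dy \right\rvert \le \lVert f\rVert_{L^\infty}\, \lVert T_x a - T_{x'} a\rVert_{L^1},
\]
and the right-hand side tends to $0$ as $\lvert x - x'\rvert \to 0$, uniformly in $x, x'$ since $\lVert T_x a - T_{x'} a\rVert_{L^1} = \lVert T_{x-x'} a - a\rVert_{L^1}$ depends only on the difference. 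This gives $a*f \in \Buc$.

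For the second claim, with $v \in C_b(I \to E)$, I would first note that $(a*v)(t) := a*(v(t)) \in \Buc$ for each $t \in I$ by the first part, and $\sup_{t\in I}\lVert (a*v)(t)\rVert_{\sup} \le \lVert a\rVert_{L^1}\sup_{t\in I}\lVert v(t)\rVert_E < \infty$, so $a*v$ is a bounded $\Buc$-valued function. Continuity in $t$ follows from linearity and boundedness of the convolution operator $g \mapsto a*g$ from $E$ to $\Buc$ (operator norm $\le \lVert a\rVert_{L^1}$): for $t, t' \in I$,
\[
\lVert (a*v)(t) - (a*v)(t')\rVert_{\sup} = \lVert a*(v(t) - v(t'))\rVert_{\sup} \le \lVert a\rVert_{L^1}\,\lVert v(t) - v(t')\rVert_E \to 0
\]
as $t' \to t$, by continuity of $v$. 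Hence $a*v \in C_b(I \to \Buc)$.

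I do not anticipate a genuine obstacle here; the only point requiring a little care is the use of continuity of translations in $L^1$ (a density argument: approximate $a$ by continuous compactly supported functions, for which the statement is clear, then pass to the limit using $\lVert T_z a - a\rVert_{L^1} \le 2\lVert a - a_\eps\rVert_{L^1} + \lVert T_z a_\eps - a_\eps\rVert_{L^1}$). If one prefers to avoid even that, one can instead invoke that $C_c(\X)$ is dense in $L^1(\X)$ and that the estimate $\lVert a*f\rVert_{\sup} \le \lVert a\rVert_{L^1}\lVert f\rVert_{L^\infty}$ lets us approximate $a*f$ uniformly by $a_\eps * f$ with $a_\eps \in C_c(\X)$, each of which is manifestly in $\Buc$; a uniform limit of $\Buc$ functions is again in $\Buc$.
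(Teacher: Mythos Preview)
Your proof is correct. The paper's argument for the first part is precisely your alternative route: it picks $a_n\in C_0(\X)$ with $\lVert a-a_n\rVert_{L^1(\X)}\to 0$, notes that $a_n*f\in\Buc$ is straightforward, and concludes via $\lVert a*f-a_n*f\rVert_E\le \lVert f\rVert_E\,\lVert a-a_n\rVert_{L^1(\X)}$ that $a*f$ is a uniform limit of $\Buc$ functions. Your primary approach through continuity of translation in $L^1$ is an equally standard and slightly more direct way to reach the same conclusion, and (as you observe) it rests on the very same density argument. For the second claim, the paper simply says it follows from the first statement together with the bound $\lVert a*f\rVert_E\le \lVert f\rVert_E\,\lVert a\rVert_{L^1(\X)}$, which is exactly the bounded-operator argument you wrote out.
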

\begin{proof}
  The convolution is a bounded function, as
  \begin{equation}\label{convbdd}
    |(a*f)(x)|\leq \|f\|_E \,\|a\|_{L^1(\X)}, \qquad a\in L^1(\X), f\in E.
  \end{equation}
  Next, let $a_n\in C_0(\X)$, $n\in\N$, be such that $\|a-a_n\|_{L^1(\X)}\to0$, $n\to\infty$. For any $n\geq1$, the proof of that $a_n*f\in\Buc$ is straightforward. Next, by \eqref{convbdd}, $\|a*f-a_n*f\|_E\to0$, $n\to\infty$. Hence $a*u$ is a uniform limit of uniformly continuous functions that fulfills the proof of the first statement. The second statement is followed from the first one and the inequality \eqref{convbdd}.
\end{proof}

 \begin{lemma}\label{convluc}
Let $a\in L^1(\X)$, $\{f_n,f\}\subset L^\infty(\X)$, $\|f_n\|\leq C$, for some $C>0$, and $f_n\locun f$. 
Then $a*f_n \locun a*f$.
\end{lemma}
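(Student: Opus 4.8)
The plan is to reduce the locally uniform convergence of $a*f_n$ to $a*f$ to a truncation argument that splits the convolution integral into a near part and a far part. First I would fix a compact set $\La\subset\X$ and $\eps>0$, and observe that it suffices to bound $\sup_{x\in\La}|(a*f_n)(x)-(a*f)(x)|$ for large $n$. Since $a\in L^1(\X)$, I would choose $R>0$ so large that $\int_{\{|y|\ge R\}}|a(y)|\,dy<\eps$; the contribution of this far region to $|(a*(f_n-f))(x)|$ is then at most $(\|f_n\|_\infty+\|f\|_\infty)\eps\le 2C\eps$ uniformly in $x$ (using $\|f\|_\infty\le C$, which follows from $f_n\to f$ pointwise a.e. together with the uniform bound, or may simply be assumed).

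Next I would handle the near part: for $x\in\La$ we have $(a*(f_n-f))(x)=\int_{\{|y|\le R\}}a(x-y)(f_n(y)-f(y))\,dy$, and for $x\in\La$, $|y|\le R$ the point $x-y$ ranges over the compact set $\La'=\La+\overline{B_R}$. The idea is to exploit that $f_n\to f$ locally uniformly, hence in $L^1_{loc}$: indeed $\1_{\La''}f_n\to\1_{\La''}f$ in $E=L^\infty$ for any compact $\La''$, so in particular $\|\1_{B_R}(f_n-f)\|_{L^1(\X)}\to 0$. Rewriting the near part as a convolution and using that $a\in L^1$ acts boundedly, I would estimate $\sup_{x\in\La}\bigl|\int_{\{|y|\le R\}}a(x-y)(f_n-f)(y)\,dy\bigr|$. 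The cleanest route is: approximate $a$ in $L^1$ by a compactly supported continuous kernel $a_k$ (as in the proof of Lemma \ref{le:simple}); the error from $a-a_k$ is again $\le \|a-a_k\|_{L^1}\cdot 2C$, uniformly in $x$; and for the fixed continuous compactly supported $a_k$, the map $x\mapsto \int a_k(x-y)(f_n-f)(y)\,dy$ is controlled by $\|a_k\|_\infty\,\|\1_{\La'''}(f_n-f)\|_{L^1}$ for a suitable compact $\La'''$ depending on $\La$ and $\operatorname{supp}a_k$, which tends to $0$ as $n\to\infty$.

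Combining the three error terms — far part $\le 2C\eps$, kernel-approximation part $\le 2C\|a-a_k\|_{L^1}$, and the main near part which vanishes as $n\to\infty$ — gives $\limsup_n\sup_{x\in\La}|(a*f_n)(x)-(a*f)(x)|\le C'\eps$ for an absolute constant, and letting $\eps\to0$ finishes the proof. The only mild subtlety, and the step I would be most careful about, is making sure that $f_n\locun f$ genuinely yields $L^1$ convergence on compacts (not just pointwise), so that the main near term really goes to zero; this is immediate from the definition of $\locun$ recalled after Theorem \ref{thm:Qholds}, namely that $f_n\1_\La\to f\1_\La$ in $E=L^\infty(\X)$ for every compact $\La$, which dominates $L^1$ convergence on $\La$ since $\La$ has finite measure. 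Everything else is routine bookkeeping of the convolution integral.
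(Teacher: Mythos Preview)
Your approach is correct and shares the paper's core idea: approximate $a$ in $L^1$ by a compactly supported $a_k$, so that on a compact $\La$ the convolution $a_k*(f_n-f)$ only samples $f_n-f$ on a fixed compact set, where the locally uniform convergence applies. Two minor remarks. First, your preliminary tail truncation (choosing $R$ with $\int_{|y|\ge R}|a|<\eps$) becomes redundant once you pass to $a_k\in C_0(\X)$; the paper uses only the two-term splitting $a=a_m+(a-a_m)$. There is also a slip in how you write the near part: the formula $\int_{\{|y|\le R\}}a(x-y)(f_n-f)(y)\,dy$ does not match the tail estimate you quote, which corresponds to the form $\int a(y)(f_n-f)(x-y)\,dy$; the compact set $\La+\overline{B_R}$ you identify is the range of the argument of $f_n-f$ in the latter form, not of $a$. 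Second, your final bound $\|a_k\|_\infty\,\|\1_{\La'''}(f_n-f)\|_{L^1}$ works but detours through $L^1_{\mathrm{loc}}$ convergence; the paper instead bounds by $\|a_m\|_{L^1(\X)}\,\|\1_{\La_m}(f_n-f)\|$, using the $L^\infty$ locally uniform convergence directly, which is slightly cleaner and avoids invoking the finite measure of $\La'''$.
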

\begin{proof}
  Let $\{a_m\}\subset C_0(\X)$ be such that $\|a_m-a\|_{L^1(\X)}\to0$, $m\to\infty$, and denote $A_m:=\mathrm{supp}\, a_m$. Note that, there exists $D>0$, such that $\|a_m\|_{L^1(\X)}\leq D$, $m\in\N$. Next, for any compact $\La\subset\X$,
  \begin{align}
  |\1_\La (x) (a_m*(f_n-f))(x)|&\leq \int_\X \1_{A_m}(y) \1_\La (x) |a_m(y)| |f_n(x-y)-f(x-y)|\,dy\notag\\
  &\leq \|a_m\|_{L^1(\X)} \|\1_{\La_m}(f_n-f)\|\to0, n\to\infty, \label{eq:justappeared}
  \end{align}
  for some compact $\La_m\subset\X$. Next,
  \begin{align*}
  \|\1_\La (a*(f_n-f))\|&\leq\|\1_\La (a_m*(f_n-f))\|+\|\1_\La ((a-a_m)*(f_n-f))\|
  \\&\leq D\|\1_{\La_m}(f_n-f)\|+(C+\|f\|)\|a-a_m\|_{L^1(\X)},
  \end{align*}
  and the second term may be arbitrary small by a choice of $m$.
\end{proof}

\begin{remark}\label{rem:contofconvaa}
By the first inequality in \eqref{eq:justappeared} and the dominated convergence theorem, we can conclude that $f_n(x)\to f(x)$ a.e. implies that $(a*f_n)(x)\to (a*f)(x)$ a.e.
\end{remark}

By Lemma~\ref{convluc}, both operators $Av=\kap  a^+*v $ and $Gv=\kl v+\kn a^-*v$ are continuous in the topology of the locally uniformly convergence. 

Because of \eqref{eq:oldA2}, \eqref{eq:oldA8}, \eqref{eq:oldA7},  \eqref{eq:HisQuasiMon}, and the continuity of $G$ in both uniform and locally uniform convergences inside the tube $E_\theta^+$, one can apply \cite[Proposition 5.4]{FT2017a} to get the properties \ref{eq:QBtheta_subset_Btheta}--\ref{prop:Q_cont} of Theorem~\ref{thm:Qholds}. 

\begin{remark}
We assumed in \cite{FT2017a} also that the condition \eqref{as:a+nodeg} below holds, however, it is straightforward to check that this was not used to prove \cite[Proposition 5.4]{FT2017a}. 
\end{remark} 

\subsection{Around the comparison principle}
For each $0\leq T_1<T_2<\infty$, let $\x_{T_1,T_2}$ denote the Banach space of all continuous mappings from $[T_1,T_2]$ to $E$ with the norm 
\[
  \|u\|_{T_1,T_2}:=\sup_{t\in[T_1,T_2]}\|u(\cdot,t)\|_E.
\]
For any $T>0$, we set also $\x_T:=\x_{0,T}$ and consider the subset $\Y_T\subset\x_T$ of all mappings which are continuously differentiable on $(0,T]$. Here and below, we consider the left derivative at $t=T$ only. We consider also the vector space $\x_\infty$ of all continuous mappings from $\R_+$ to $E$.

Note that, by \eqref{eq:HisQuasiMon}, we can apply \cite[Theorem~2.3]{FT2017a} to get the following statement, which is nothing but the combination of \ref{eq:QBtheta_subset_Btheta} and \ref{prop:Q_preserves_order}.
\begin{proposition}\label{prop:compar}
Let \eqref{as:chiplus_gr_m} and \eqref{as:aplus_gr_aminus} hold. Let functions $u_1,u_2$ be classical solutions to \eqref{eq:basic} on $\R_+$ with the corresponding initial conditions which satisfy $0\leq u_{1}(x,0)\leq u_{2}(x,0)\leq \theta$  for (a.a.) $x\in\X$. Then \eqref{eq:comparineq} holds. In particular, $0\leq u(\cdot,0)\leq \theta$ for (a.a.) $x\in\X$ implies that $0\leq u(x,t)\leq\theta$ for $t>0$ and (a.a.) $x\in\X$. 
\end{proposition}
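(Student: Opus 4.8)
The plan is to obtain the statement as a direct consequence of the quasi-monotonicity \eqref{eq:HisQuasiMon} of the right-hand side $H$ of \eqref{eq:basic} together with the abstract comparison principle \cite[Theorem~2.3]{FT2017a}. First I would record that both constant functions $u\equiv0$ and $u\equiv\theta$ are classical solutions of \eqref{eq:basic}: the former trivially, and the latter because, by \eqref{normed} and \eqref{theta_def}, one has $\kap a^+*\theta-m\theta-\theta\,G\theta=\kap\theta-m\theta-\theta(\kap-m)=0$. Thus $0$ and $\theta$ constitute a pair of constant sub- and super-solutions, and the order interval they bound is exactly the tube $E^+_\theta$ of \eqref{eq:tube}.

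Next, by \eqref{eq:HisQuasiMon} the operator $H$ from \eqref{eq:defH} is quasi-monotone on $E^+_\theta$ with the explicit constant $p=\kap+\theta\kl>0$; this is the only place where \eqref{as:aplus_gr_aminus} is used, through the inequality $J_\theta*(v-u)\geq0$ valid whenever $0\leq u\leq v$. Given the ordered barriers $0\leq\theta$, the pointwise ordering of the initial data $0\leq u_1(\cdot,0)\leq u_2(\cdot,0)\leq\theta$, and quasi-monotonicity of $H$ on the order interval $E^+_\theta$, \cite[Theorem~2.3]{FT2017a} applies and yields that both $u_1(\cdot,t)$ and $u_2(\cdot,t)$ remain in $E^+_\theta$ and stay ordered for all $t>0$, which is precisely \eqref{eq:comparineq}. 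An equivalent route I would point out is to use the already-verified properties of Theorem~\ref{thm:Qholds}: by uniqueness (Theorem~\ref{thm:existuniq}) every classical solution equals the semi-flow acting on its initial datum, $u_i(\cdot,t)=Q_t u_i(\cdot,0)$, so \ref{eq:QBtheta_subset_Btheta} gives $u_i(\cdot,t)\in E^+_\theta$ and \ref{prop:Q_preserves_order} gives $u_1(\cdot,t)\leq u_2(\cdot,t)$. The concluding ``in particular'' clause then follows by specialization, applying the inequality with $u_1\equiv0$ or with $u_2\equiv\theta$ in place of one of the two solutions.

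I do not anticipate a substantive obstacle, since the real work has been front-loaded into establishing \eqref{eq:HisQuasiMon} and into the cited abstract theorem. The only point needing mild care is avoiding circularity: quasi-monotonicity of $H$ is available only on $E^+_\theta$, so one must know a priori that the solutions do not leave that region. This is handled automatically either by running the comparison with $0$ and $\theta$ themselves as barriers in \cite[Theorem~2.3]{FT2017a}, or by invoking the invariance \ref{eq:QBtheta_subset_Btheta} established in Subsection~\ref{subsec:checkQ1-Q5}. A secondary bookkeeping remark is that, since $C_b(\X)$ and $\Buc$ embed into $L^\infty(\X)$ and classical solutions are unique, it suffices to argue in $E=L^\infty(\X)$ regardless of which of the admissible spaces the given $u_1,u_2$ belong to.
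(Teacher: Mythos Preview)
Your proposal is correct and mirrors the paper's own argument: the paper simply observes that, by the quasi-monotonicity \eqref{eq:HisQuasiMon}, one may invoke \cite[Theorem~2.3]{FT2017a}, and that the resulting statement is precisely the combination of \ref{eq:QBtheta_subset_Btheta} and \ref{prop:Q_preserves_order}. You supply a bit more detail (the constant barriers $0$ and $\theta$, the alternative via the semi-flow properties), but the substance and route are the same.
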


\begin{remark}\label{rem:necineq}
The condition \eqref{as:aplus_gr_aminus} is a necessary one for Proposition~\ref{prop:compar}. Indeed, let the condition \eqref{as:aplus_gr_aminus} fail in a ball $B_{r}(y_0)$ only, ${r}>0$, $y_0\in\X$, i.e. $J_\theta(x)<0$, for a.a. $x\in B_{r}(y_0)$, where $J_\theta$ is given by \eqref{diffofkernels}. Take any $y\in B_{r}(y_0)$ with $\frac{{r}}{4}<|y-y_0|<\frac{3{r}}{4}$, then $y_0\notin B_{\frac{{r}}{4}}(y)$ whereas $B_{\frac{{r}}{4}}(y)\subset B_{r}(y_0)$.
Take $u_0\in\Buc$ such that $u_0(x)=\theta$, $x\in \X\setminus B_{\frac{{r}}{4}}(y_0-y)$, and $u_0(x)< \theta$, $x\in B_{\frac{{r}}{4}}(y_0-y)$. Since $\int_\X J_\theta(x)\,dx=\kap -\kn\theta=m+\kl\theta$, one has
\begin{align*}
\frac{\partial u}{\partial t}(y_0,0)&=-(m+\kl\theta)\theta+\kap (a^+*u)(y_0,0)-\kn\theta (a^-*u)(y_0,0)\notag\\&=(J_\theta*u)(y_0,0)-(\kap -\kn\theta)\theta=(J_\theta*(u_0-\theta))(y_0)
\\&
=\int_{B_{\frac{{r}}{4}}(y)} J_\theta(x)(u_0(y_0-x)-\theta)\,dx>0,\notag
\end{align*}
Therefore, $u(y_0,t)>u(y_0,0)=\theta$, for small enough $t>0$, and hence, the statement of Proposition~\ref{prop:compar} does not hold in this case. 
\end{remark}

As a simple corollary of \ref{eq:QBtheta_subset_Btheta}--\ref{prop:Q_cont}, we will show that the semi-flow $(Q_t)_{t\geq0}$ preserves functions which are monotone along a given direction. More precisely, 
a function $f\in L^\infty(\X)$ is said to be increasing (decreasing, constant) along the vector $\xi\in S^{d-1} $ (recall that $ S^{d-1} $ denotes a unit sphere in $\X$ centered at the origin) if, for a.a. $x\in\X $, the function 
$f(x+s\xi)=(T_{-s\xi}f)(x)$ is increasing (decreasing, constant) in $s\in\R$, respectively.

\begin{proposition}\label{prop:monot_along_vector_sol}
Let \eqref{as:chiplus_gr_m} and \eqref{as:aplus_gr_aminus} hold. Let $u_0\in E^+_\theta$ be the initial condition for the equation \eqref{eq:basic} which is 
increasing (decreasing, constant) along a vector $\xi\in S^{d-1}  $; and $u(\cdot,t)\in E^+_\theta$, $t\geq0$, be the corresponding solution (cf. Proposition~\ref{prop:compar}). Then, for any $t>0$, $u(\cdot,t)$ is increasing (decreasing, constant, respectively) along the $\xi$.
\end{proposition}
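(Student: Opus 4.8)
The plan is to deduce the statement from the translation-invariance property \ref{prop:QTy=TyQ} together with the order-preservation property \ref{prop:Q_preserves_order}. The key observation is that a function $f$ is, say, decreasing along $\xi$ if and only if $T_{s\xi}f\leq f$ for all $s\geq 0$, where $T_{s\xi}$ is the translation operator \eqref{shiftoper}; similarly $f$ is increasing along $\xi$ iff $T_{s\xi}f\geq f$ for all $s\geq 0$, and constant along $\xi$ iff $T_{s\xi}f=f$ for all $s\in\R$. I would first record this elementary reformulation, being slightly careful about the ``a.a.'' quantifiers in the definition: monotonicity along $\xi$ of $f\in L^\infty(\X)$ as stated means that for a.a. $x$ the map $s\mapsto f(x+s\xi)$ is monotone, and by Fubini this is equivalent to the pointwise (a.e.) inequality $T_{s\xi}f\le f$ holding for every fixed $s\ge 0$ (a countable dense set of $s$ suffices, then use monotonicity/right-continuity or just density to fill in).

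Granting that reformulation, the argument is short. Suppose $u_0\in E^+_\theta$ is decreasing along $\xi$, so $T_{s\xi}u_0\le u_0$ for all $s\ge 0$. Fix $t>0$ and $s\ge0$. Both $u_0$ and $T_{s\xi}u_0$ lie in $E^+_\theta$ (translation preserves the tube), so by \ref{prop:Q_preserves_order}, $Q_t(T_{s\xi}u_0)\le Q_t u_0$. By \ref{prop:QTy=TyQ} with $y=s\xi$, $Q_t(T_{s\xi}u_0)=T_{s\xi}(Q_t u_0)$, hence $T_{s\xi}(Q_t u_0)\le Q_t u_0$ for all $s\ge0$, i.e. $u(\cdot,t)=Q_t u_0$ is decreasing along $\xi$. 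The increasing case is identical with the inequality reversed, and the constant case follows by combining both (or by applying the increasing/decreasing cases to $u_0$ and to $u_0$ viewed along $-\xi$, or simply noting $T_{s\xi}u_0=u_0$ for all $s$ gives $T_{s\xi}Q_tu_0=Q_tu_0$ for all $s$).

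I do not expect a serious obstacle here; the only mildly delicate point is the bookkeeping of the almost-everywhere qualifiers, i.e. checking that the a.e.-pointwise inequality $T_{s\xi}f\le f$ for each fixed $s$ in a countable dense subset of $[0,\infty)$ is genuinely equivalent to the ``for a.a. $x$, $s\mapsto f(x+s\xi)$ is decreasing'' formulation in the definition. This is a routine Fubini-plus-density argument (monotone functions on $\R$ are determined by their values on a dense set up to their at-most-countably-many jump points), and once it is in place the rest is a two-line application of \ref{prop:QTy=TyQ} and \ref{prop:Q_preserves_order}. I would state the reformulation as a small observation (or fold it into the proof) and then give the three-line deduction above.
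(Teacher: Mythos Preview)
Your approach is essentially the same as the paper's: both deduce the result directly from translation-invariance \ref{prop:QTy=TyQ} and order-preservation \ref{prop:Q_preserves_order}, by comparing $Q_t$ applied to $u_0$ and to a shift of $u_0$ along $\xi$. One small sign slip to fix: with the paper's convention $(T_y f)(x)=f(x-y)$, a function $f$ decreasing along $\xi$ satisfies $T_{s\xi}f\geq f$ for $s\geq 0$ (since $(T_{s\xi}f)(x)=f(x-s\xi)\geq f(x)$), not $\leq$; your inequalities are consistently reversed, so the conclusion survives, but you should correct the direction (the paper avoids this by working with $T_{-s\xi}$).
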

\begin{proof}
Let $u_0$ be decreasing along a $\xi\in S^{d-1}  $. Take any $s_1\leq s_2$ and consider
two initial conditions to \eqref{eq:basic}: $u_0^i(x)=u_0(x+s_i\xi)=(T_{-s_i\xi}u_0)(x)$, $i=1,2$. Since $u_0$ is decreasing, $u_0^1(x)\geq u_0^2(x)$, $x\in\X$. Then, by Theorem~\ref{thm:Qholds},
\[
T_{-s_1\xi}Q_tu_0=Q_tT_{-s_1\xi}u_0=Q_t u_0^1\geq Q_tu_0^2=Q_tT_{-s_2\xi}u_0=
T_{-s_2\xi}Q_tu_0,
\]
that proves the statement. The cases of a decreasing $u_0$ can be considered in the same way. The constant function along a vector is decreasing and decreasing simultaneously.
\end{proof}

For each $T>0$ and $u\in\Y_T$, one can define
\begin{equation}\label{Foper}
  (\mathcal{F}u)(x,t):=\dfrac{\partial u}{\partial t}(x,t)-\kap (a^{+}*u)(x,t)+mu(x,t)+u(x,t) \bigl( Gu \bigl)(x,t)
\end{equation}
for all $t\in(0,T]$ and $x\in\X$ (a.a. $x\in\X$ in the case $E=L^\infty(\X)$).

By \cite[Theorem~2.3]{FT2017a}, we will also get the following counterpart of Proposition~\ref{prop:compar}.
  \begin{proposition}\label{prop:fullcomp}
  Let \eqref{as:chiplus_gr_m} and \eqref{as:aplus_gr_aminus} hold. Let $T>0$ be fixed and $u_1,u_2\in\Y_T$ be such that, for all $t\in(0,T]$, $x\in\X$,
    \begin{gather}
    (\mathcal{F}u_1)(x,t)\leq (\mathcal{F}u_2)(x,t),\label{eq:max_pr_BUC:ineq}\\
    0 \leq u_1(x,t)\leq\theta, \qquad 0 \leq u_2(x,t)\leq \theta,\notag\\
      0\leq u_{1}(x,0)\leq u_{2}(x,0)\leq \theta.\notag
    \end{gather}
    Then \eqref{eq:comparineq} holds for all $t\in[0,T]$, $x\in\X$.
  \end{proposition}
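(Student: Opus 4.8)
The plan is to obtain Proposition~\ref{prop:fullcomp} as a direct instance of the abstract comparison principle \cite[Theorem~2.3]{FT2017a}, in exactly the same manner as Proposition~\ref{prop:compar} was derived. Writing $Hu$ for the right-hand side \eqref{eq:defH} of the equation, formula \eqref{Foper} reads $\mathcal{F}u=\partial_t u-Hu$, so the hypothesis \eqref{eq:max_pr_BUC:ineq} together with the tube bounds says precisely that $u_1$ is a sub-solution and $u_2$ a super-solution of $\partial_t u=Hu$ on $(0,T]$, each valued in the tube $E^+_\theta$ of \eqref{eq:tube}, with $u_1(\cdot,0)\le u_2(\cdot,0)$. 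The only structural ingredient needed is the quasi-monotonicity \eqref{eq:HisQuasiMon}, which provides $p=\kap+\theta\kl>0$ with $Hu+pu\le Hv+pv$ whenever $u\le v$ in $E^+_\theta$. These are exactly the assumptions of \cite[Theorem~2.3]{FT2017a}, whose conclusion is \eqref{eq:comparineq} for $t\in(0,T]$; the endpoint $t=0$ and the statement on the closed interval $[0,T]$ then follow from continuity of $u_1,u_2$ in $\x_T$. The point to double-check is that the cited theorem genuinely covers sub/super-solutions confined to $E^+_\theta$ rather than only genuine solutions: this holds because \eqref{eq:HisQuasiMon} is a statement about \emph{arbitrary} ordered pairs in $E^+_\theta$, and the hypotheses keep $u_1,u_2$ in $E^+_\theta$ for all $t\in[0,T]$.

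For completeness, the mechanism behind \cite[Theorem~2.3]{FT2017a} is the following. The operator $\widetilde H:=H+p\,\mathrm{Id}$ is order-preserving on $E^+_\theta$: for $u\le v$ in $E^+_\theta$ one has $\widetilde H u-\widetilde H v=(Hu-Hv)+p(u-v)\le -J_\theta*(v-u)\le 0$ by \eqref{eq:HisQuasiMon} and \eqref{as:aplus_gr_aminus}, so here the assumption \eqref{as:aplus_gr_aminus} enters only through the positivity of $J_\theta$, which makes $J_\theta*\,\cdot$ preserve the cone. Rewriting $\partial_t u=\widetilde H u-pu+\mathcal{F}u$ in Duhamel form, $u(t)=e^{-pt}u(0)+\int_0^t e^{-p(t-s)}\bigl(\widetilde H u(s)+\mathcal{F}u(s)\bigr)\,ds$, and subtracting the relations for $u_2$ and $u_1$, one gets that $w:=u_2-u_1$ satisfies $w(t)=e^{-pt}w(0)+\int_0^t e^{-p(t-s)}\bigl(\widetilde H u_2(s)-\widetilde H u_1(s)+\mathcal{F}u_2(s)-\mathcal{F}u_1(s)\bigr)\,ds$; as long as $w(s)\ge 0$ for all $s<t$, the three contributions on the right are nonnegative (by \eqref{eq:max_pr_BUC:ineq}, the ordering of the initial data, and monotonicity of $\widetilde H$), whence $w(t)\ge 0$. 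Starting from $w(\cdot,0)\ge 0$, a continuity/bootstrap argument then propagates nonnegativity to all of $[0,T]$.

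The main obstacle — and the reason to invoke \cite[Theorem~2.3]{FT2017a} instead of re-proving it here — is implementing the ``first time $w$ becomes negative'' step in the space $E=L^\infty(\X)$, where $\essinf_x w(x,t)$ need not be attained and where $\partial_t u_i$ is controlled only on $(0,T]$; this is handled by the standard device of replacing $w$ by $w+\eps(1+t)$, proving strict positivity, and letting $\eps\to0$ (together with a limit $\tau\downarrow0$ for the Duhamel integral). All of this is already packaged in the cited theorem, so the proof of Proposition~\ref{prop:fullcomp} reduces to the identifications made in the first paragraph.
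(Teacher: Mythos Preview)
Your proposal is correct and matches the paper's own approach: the paper derives Proposition~\ref{prop:fullcomp} by a one-line appeal to \cite[Theorem~2.3]{FT2017a}, relying on the quasi-monotonicity \eqref{eq:HisQuasiMon} (which in turn uses \eqref{as:aplus_gr_aminus}) already established in Subsection~\ref{subsec:checkQ1-Q5}. Your supplementary sketch of the Duhamel/monotone-operator mechanism is reasonable, though note that the paper (see the proof of Proposition~\ref{compprabscont}) indicates the actual device in \cite{FT2017a} uses the auxiliary function $e^{Kt}(u_2-u_1)$ rather than an additive $\eps(1+t)$ perturbation; this does not affect the validity of your argument, since you explicitly defer to the cited theorem.
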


Below, for technical reasons, we will need to extend the result of Proposition~\ref{prop:fullcomp} for a wider class of functions in the case $E=\Buc$. Namely, the expression \eqref{Foper} is well-defined for a.a. $t$ if the function $u$ is absolutely continuous in $t$ only. In view of this, for any $T\in(0,\infty]$, we define the set $\mathscr{D}_T$ of all functions $u:\X\times\R_+\to\R$, such that, for all $t\in[0,T)$, $u(\cdot,t)\in \Buc$, and, for all $x\in\X$, the function $f(x,t)$ is absolutely continuous in $t$ on $[0,T)$. Then, for any $u\in\mathscr{D}_T$, one can define the function \eqref{Foper}, for all $x\in\X$ and a.a. $t\in[0,T)$.  

\begin{proposition}\label{compprabscont}
The statement of Proposition~\ref{prop:fullcomp} remains true, if we assume that $u_1,u_2\in\mathscr{D}_T$ and, for any $x\in\X$, the inequality \eqref{eq:max_pr_BUC:ineq} holds for a.a.~$t\in(0,T)$~only.
\end{proposition}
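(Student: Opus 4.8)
The plan is to reduce Proposition~\ref{compprabscont} to Proposition~\ref{prop:fullcomp} by a standard mollification-in-time argument, exploiting the fact that the only role of the differentiability assumption in Proposition~\ref{prop:fullcomp} is to make $\mathcal{F}u_i$ pointwise defined, whereas the comparison conclusion \eqref{eq:comparineq} is a statement about the values of $u_i$ themselves. First I would fix $T>0$ and $u_1,u_2\in\mathscr{D}_T$ satisfying the hypotheses, and for small $h>0$ define the time-averaged functions
\[
  u_i^h(x,t):=\frac{1}{h}\int_t^{t+h} u_i(x,s)\,ds,\qquad x\in\X,\ t\in[0,T-h).
\]
Since $u_i(x,\cdot)$ is absolutely continuous, $u_i^h(x,\cdot)$ is continuously differentiable in $t$ with $\partial_t u_i^h(x,t)=\frac{1}{h}\bigl(u_i(x,t+h)-u_i(x,t)\bigr)$; and since $u_i(\cdot,s)\in\Buc$ uniformly on compact time intervals (with a uniform modulus of continuity, as $\|u_i(\cdot,s)\|\le\theta$), one checks that $u_i^h(\cdot,t)\in\Buc$ and that $t\mapsto u_i^h(\cdot,t)$ is continuous, so $u_i^h\in\Y_{T-h}$ (the continuous differentiability on $(0,T-h]$ being clear). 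Also $0\le u_i^h\le\theta$ is preserved by averaging, and $u_1^h(x,0)\le u_2^h(x,0)$ follows from averaging the pointwise inequality on initial data together with $u_1(x,s)\le ?$—here one must be slightly careful, since we only know $u_1(x,0)\le u_2(x,0)$, not $u_1(x,s)\le u_2(x,s)$ for $s>0$; so instead I would compare $u_1^h$ and $u_2^h$ on $[0,T-h]$ directly via $\mathcal F$.

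The key computation is to show $\mathcal F u_1^h\le \mathcal F u_2^h$ a.e., or rather everywhere after averaging. The linear part of $\mathcal F$ (namely $\partial_t-\kap a^+*(\cdot)+m(\cdot)$) commutes with time-averaging, so its contribution to $\mathcal F u_i^h$ is exactly the time-average of the linear part of $\mathcal F u_i$. The nonlinear term $u\,Gu$ does not commute with averaging; here I would use the quasi-monotonicity structure \eqref{eq:HisQuasiMon}: writing $\widetilde{\mathcal F}u:=\mathcal F u + p u$ with $p=\kap+\theta\kl$, one has that $H(u)-pu$ is nonincreasing under the ordering on $E_\theta^+$ in the relevant sense, and more to the point $\widetilde{\mathcal F}$ applied to the average can be controlled by the average of $\widetilde{\mathcal F}$ plus an error. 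Actually the cleanest route is: apply Proposition~\ref{prop:fullcomp} not to $u_i^h$ but observe that the desired conclusion is local in time, and a Gronwall/monotone-iteration argument using \eqref{eq:HisQuasiMon} already underlies \cite[Theorem~2.3]{FT2017a}; so I would instead re-run that theorem's proof, noting that at every place where $\frac{\partial u_i}{\partial t}$ appears one integrates in $t$ anyway (the theorem is proved by writing the equation in mild/integral form $u_i(t)=S_t u_i(0)+\int_0^t S_{t-s}(\dots)\,ds$, where $S_t$ is the semigroup generated by $\kap a^+*(\cdot)-(m+p)(\cdot)$), and an $\mathscr D_T$ function is precisely one for which this integrated identity, with $\mathcal F u_i$ in place of $0$, holds with the integral taken in the Bochner sense. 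Then the inequality $\mathcal F u_1\le\mathcal F u_2$ a.e.\ in $t$ passes directly into the integral form, and the monotone-iteration comparison goes through verbatim.

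I expect the main obstacle to be the bookkeeping needed to confirm that \cite[Theorem~2.3]{FT2017a} really only uses the differentiability of $u_i$ through its integrated (Duhamel) consequence, so that absolute continuity in $t$ with the differential inequality holding a.e.\ suffices; this requires recalling how that theorem is proved (via the variation-of-constants formula and the positivity/quasi-monotonicity of $H+pI$) and checking that no pointwise-in-$t$ evaluation of the derivative is essential. A secondary technical point is verifying $u_i^h\in\Y_{T-h}$ rigorously—in particular the $E$-valued continuity of $t\mapsto u_i^h(\cdot,t)$ and of its derivative—but this is routine given $\|u_i(\cdot,t)\|\le\theta$ for all $t$ and the absolute continuity in $t$ for each fixed $x$, together with Lemma~\ref{le:simple} to handle the convolution terms; one then lets $h\downarrow 0$ and uses that $u_i^h(x,t)\to u_i(x,t)$ (by the Lebesgue differentiation theorem, for a.a.\ $t$, for each $x$) to transfer \eqref{eq:comparineq} from the $u_i^h$ back to the $u_i$ on $[0,T]$, invoking right-continuity/continuity in $t$ to cover all $t$.
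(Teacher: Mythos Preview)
Your proposal contains the right idea but takes a circuitous route to it. The mollification attempt in your first paragraph runs into exactly the difficulties you note (the nonlinear term $u\,Gu$ does not commute with time-averaging, and you cannot compare $u_1^h(x,0)$ with $u_2^h(x,0)$ without already knowing $u_1\le u_2$ for $s>0$), and you rightly abandon it. The second paragraph, where you propose to ``re-run that theorem's proof'' using the integrated form and observing that the a.e.\ differential inequality passes to an integral inequality, is the correct approach and is essentially what the paper does.

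The paper's argument is simply this: following \cite[Theorem~4.2]{FT2017a}, one sets $v(x,t):=e^{Kt}(u_2(x,t)-u_1(x,t))$ for a large constant $K$; under the hypotheses, $v$ satisfies a differential relation $\frac{d}{dt}v(x,t)=\Theta(t,v(x,t))$ for a.a.\ $t$, and since $v$ is absolutely continuous in $t$ this is equivalent to the integral identity $v(x,t)=v(x,0)+\int_0^t\Theta(s,v(x,s))\,ds$ for \emph{all} $t\in[0,T]$. The remainder of the original proof uses only this integral form. So there is no need for mollification, Duhamel representation with a semigroup $S_t$, or a limit $h\downarrow 0$; the exponential weight replaces the semigroup, and absolute continuity alone converts the a.e.\ differential inequality into the needed integral inequality. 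Your final paragraph's return to $u_i^h$ is therefore unnecessary and should be dropped.
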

\begin{proof}
  One can literally follow the proof of \cite[Theorem~4.2]{FT2017a}: the auxiliary function $v(x,t):=e^{Kt}(u_{2}(x,t)-u_{1}(x,t))$ with large enough $K>0$ will satisfy a proper differential equation $\frac{d}{dt}v(x,t)=\Theta(t,v(x,t))$, see \cite[(4.12)]{FT2017a}, for a.a. $t\in[0,T]$. However, the corresponding integral equation $v(x,t)=v(x,0)+\int_0^t \Theta(s,v(x,s))\,ds$ holds still for all $t\in[0,T]$, since $v$ is continuous in $t$. Hence, the rest of the proof remains the same.
\end{proof}
We are going to show now that any solution to \eqref{eq:basic} is bounded from below by a solution to the corresponding equation with `truncated' kernels~$a^\pm$.
Namely, suppose that the conditions \eqref{as:chiplus_gr_m}, \eqref{as:aplus_gr_aminus} hold. Consider a family of Borel sets $\{\Delta_R\mid R>0\}$, such that $\Delta_R\nearrow\X$, $R\to\infty$. Define, for any $R>0$, the following kernels:
\begin{equation}\label{trkern}
  a_R^{\pm}(x)=\1_{\Delta_R}(x)a^{\pm}(x),\quad x\in\X,
\end{equation}
and the corresponding `truncated' equation, cf. \eqref{eq:basic},
\begin{equation}
  \begin{cases}
    \begin{aligned}
      \dfrac{\partial w}{\partial t}(x,t)&= \kap (a_R^+*w)(x,t)-mw(x,t) - \kl w^2(x,t) \\ 
                                          &\quad -\kn w(x,t)(a_R^-*w)(x,t), \qquad x\in\X,\  t>0,
    \end{aligned}\\
    w(x,0)=w_{0}(x), \qquad \qquad \qquad \qquad \qquad \quad \, x\in\X.
  \end{cases}\label{eq:basic_R}
\end{equation}
We set
\begin{equation}\label{ARdef}
  A_R^\pm:=\int_{\Delta_R}a^\pm(x)\,dx \nearrow 1, \quad R\to\infty,
\end{equation}
by \eqref{normed}. Then the non-zero constant solution to \eqref{eq:basic_R} is equal to
\begin{equation}\label{defofthetaR}
  \theta_R=\dfrac{\kap A_R^+-m}{\kn A_R^- + \kl}\to \theta, \quad R\to\infty,
\end{equation}
however, the convergence $\theta_R$ to $\theta$ is, in general, not monotonic. Clearly, by~\eqref{as:chiplus_gr_m}, $\theta_R>0$ if only
\begin{equation}\label{bigR}
  A_R^+>\frac{m}{\kap }\in(0,1).
\end{equation}

\begin{proposition}\label{lowestsuppCub}
Let \eqref{as:chiplus_gr_m} and \eqref{as:aplus_gr_aminus} hold, and let $R>0$ be such that \eqref{bigR} holds, cf.~\eqref{ARdef}. Let $w_0\in E$ be such that $0\leq w_0\leq \theta_R,\ x\in\X$. Then there exists the unique solution $w\in\Xinf$ to \eqref{eq:basic_R}, such that
\begin{equation}\label{wlessthetaR}
0\leq w(x,t)\leq\theta_R, \quad x\in\X,\ t>0.
\end{equation}
Let $u_0\in E_\theta^+$ and $u\in\Xinf$ be the corresponding solution to \eqref{eq:basic}. If $w_0(x)\leq u_0(x), x\in \X$, then
\begin{equation}\label{ineqtrunc}
w(x,t)\leq u(x,t),\quad x\in\X, \ t>0.
\end{equation}
\end{proposition}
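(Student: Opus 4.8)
The plan is to split the argument into three parts: (i) existence and uniqueness of a solution $w$ to the truncated equation \eqref{eq:basic_R}; (ii) the a priori bound \eqref{wlessthetaR}; and (iii) the comparison inequality \eqref{ineqtrunc}. For (i), I would observe that \eqref{eq:basic_R} has exactly the same structure as \eqref{eq:basic}, with $a^\pm$ replaced by $a_R^\pm$ and $m$, $\kap$, $\kl$, $\kn$ unchanged; in particular $0\le a_R^\pm\in L^1(\X)$, $m>0$, $\kl,\kn\ge0$, $\kl+\kn>0$, and the competition mapping $G_R u:=\kl u+\kn a_R^-*u$ is again globally Lipschitz on $E$ and preserves the nonnegative cone. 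Hence Theorem~\ref{thm:existuniq} applies verbatim to \eqref{eq:basic_R} and yields, for any $0\le w_0\in E$, a unique classical solution $w\in\Xinf$.

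For (ii), I would apply the invariance/comparison machinery (Proposition~\ref{prop:compar}, or rather Theorem~\ref{thm:Qholds}\ref{eq:QBtheta_subset_Btheta}) to the truncated equation. The point is that $u\equiv\theta_R$ is the positive constant stationary solution of \eqref{eq:basic_R}, by the very definition \eqref{defofthetaR}, and the condition \eqref{bigR} guarantees $\theta_R>0$. I would then need the analogue of \eqref{as:aplus_gr_aminus} for the truncated kernels, namely that $J_{\theta_R,R}(x):=\kap a_R^+(x)-\kn\theta_R a_R^-(x)\ge0$ a.e.; this follows because on $\Delta_R$ we have $a_R^\pm=a^\pm$ and, assuming for the moment $\theta_R\le\theta$, $J_{\theta_R,R}=\kap a^+-\kn\theta_R a^-\ge \kap a^+-\kn\theta a^-=J_\theta\ge0$ by \eqref{as:aplus_gr_aminus}, while off $\Delta_R$ both kernels vanish. (If $\theta_R>\theta$ one must instead argue directly from the quasi-monotonicity computation, replacing $J_\theta$ by $J_{\theta_R,R}$ and $p$ by $p_R:=\kap+\theta_R\kl$; since this is the only place the sign of $\theta_R-\theta$ enters, I would simply redo the short computation leading to \eqref{eq:HisQuasiMon} with the truncated data, which gives quasi-monotonicity on $E^+_{\theta_R}$ unconditionally once $J_{\theta_R,R}\ge0$, and the latter holds on $\Delta_R$ precisely because $a^-$ may only be shrunk — indeed $\kap a_R^+-\kn\theta_R a_R^- = \1_{\Delta_R}(\kap a^+-\kn\theta_R a^-)$ and we can check positivity of $\kap a^+-\kn\theta_R a^-$ on $\Delta_R$ using $\theta_R\le \frac{\kap}{\kn}\cdot\frac{A_R^+}{A_R^-}$ together with $A_R^+\le 1$.) Granting quasi-monotonicity of the truncated right-hand side on $E^+_{\theta_R}$, the constant $\theta_R$ and the constant $0$ are respectively a super- and a sub-solution, so Proposition~\ref{prop:fullcomp} (applied to the truncated equation) sandwiches $w$ and gives \eqref{wlessthetaR}.

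For (iii), the key observation is that $w$, as a solution of \eqref{eq:basic_R} with $0\le w\le\theta_R\le$ (something $\le\theta$, or at worst controllable), is a \emph{sub-solution} of the original equation \eqref{eq:basic} in the sense of $\mathcal{F}$: writing out $(\mathcal{F}w)(x,t)$ from \eqref{Foper} and using the equation \eqref{eq:basic_R} for $\partial_t w$, the difference reduces to
\[
(\mathcal{F}w)(x,t) = \kap\bigl((a_R^+-a^+)*w\bigr)(x,t) - \kn\, w(x,t)\bigl((a_R^- - a^-)*w\bigr)(x,t).
\]
Now $a_R^+-a^+ = -\1_{\X\setminus\Delta_R}a^+\le 0$ and $a_R^--a^- = -\1_{\X\setminus\Delta_R}a^-\le 0$, while $w\ge0$; hence the first term is $\le 0$ and the second term is $-\kn w\cdot(\text{nonpositive})\ge 0$ — so the two contributions have opposite signs and this is \emph{not} immediately $\le 0$. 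The fix is to bound the bad (second) term: since $a^-$ is a probability density and $0\le w\le\theta_R$, one has $0\le\bigl((a^--a_R^-)*w\bigr)(x,t)\le\theta_R(1-A_R^-)$, so the second term is at most $\kn\theta_R^2(1-A_R^-)$, which tends to $0$ as $R\to\infty$ but is not zero. This means $w$ is only an \emph{approximate} sub-solution. I would handle this exactly as in the proof of Proposition~\ref{compprabscont}: pass to $v(x,t):=e^{Kt}(u(x,t)-w(x,t))$ for large $K$, derive the scalar differential inequality, and absorb the error term; alternatively, and more cleanly, I expect the intended argument is that the inequality $a_R^\pm\le a^\pm$ forces $(\mathcal{F}w)\le 0$ directly once one also uses $Gw\le G_R w$ is \emph{false} — so the honest route is: $\mathcal F w\le 0$ fails termwise, hence one must instead note $w$ solves \eqref{eq:basic_R} and compare \eqref{eq:basic_R} with \eqref{eq:basic} via the quasi-monotone comparison theorem \cite[Theorem~2.3]{FT2017a} applied to the \emph{pair of different equations}, using that the nonlinearity of \eqref{eq:basic} dominates that of \eqref{eq:basic_R} pointwise on $E^+_\theta$ (because replacing $a^\pm$ by $a_R^\pm\le a^\pm$ decreases the positive diffusion term $\kap a^+*w$ and decreases the magnitude of the negative competition term $-\kn w\,a^-*w$, and these pull in opposite directions — so again this needs the error estimate).

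The main obstacle is therefore step (iii): the truncation lowers both the (helpful) dispersal term and the (harmful) nonlocal-competition term, so $w$ is not an exact sub-solution of \eqref{eq:basic}, and one genuinely needs to control the mismatch $\kn w\bigl((a^--a_R^-)*w\bigr)\le\kn\theta_R^2(1-A_R^-)$ and feed it through a Gronwall-type argument as in Proposition~\ref{compprabscont}, rather than invoking a bare comparison principle. Everything else — existence from Theorem~\ref{thm:existuniq}, and the bound \eqref{wlessthetaR} from the quasi-monotone comparison applied to the truncated equation with super-solution $\theta_R$ — is a routine transcription of the already-established results to the truncated data.
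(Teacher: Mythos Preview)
Your steps (i) and (ii) are essentially right, but there is a real gap in (iii), and the missing ingredient also streamlines (ii). Under \eqref{as:aplus_gr_aminus} one always has $\theta_R\le\theta$: a direct computation gives
\[
\theta-\theta_R=\frac{1}{\kam(\kn A_R^-+\kl)}\int_{\Delta_R^c}\bigl(\kap a^+(x)-\kn\theta a^-(x)\bigr)\,dx=\frac{1}{\kam(\kn A_R^-+\kl)}\int_{\Delta_R^c}J_\theta(x)\,dx\ge0.
\]
So the case ``$\theta_R>\theta$'' in your parenthetical never arises, and $J_{\theta_R,R}=\1_{\Delta_R}(\kap a^+-\kn\theta_R a^-)\ge\1_{\Delta_R}J_\theta\ge0$ follows immediately; this is how the paper handles (ii).

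For (iii), the same inequality $w\le\theta_R\le\theta$ is the key you are missing. Your expression for $(\mathcal{F}w)$ is correct, but instead of bounding the two terms separately you should bound the prefactor $w(x,t)$ of the competition term by $\theta$ and then \emph{combine} the integrals:
\[
(\mathcal{F}w)(x,t)\le -\kap\int_{\Delta_R^c}a^+(y)\,w(x-y,t)\,dy+\kn\theta\int_{\Delta_R^c}a^-(y)\,w(x-y,t)\,dy=-\int_{\Delta_R^c}J_\theta(y)\,w(x-y,t)\,dy\le0,
\]
again by \eqref{as:aplus_gr_aminus}. Hence $(\mathcal{F}w)\le0=(\mathcal{F}u)$ exactly, and Proposition~\ref{prop:fullcomp} yields \eqref{ineqtrunc} directly. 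This is what the paper does.

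Your proposed Gronwall workaround would not close: the residual $\kn\theta_R^2(1-A_R^-)$ is a fixed positive constant for each fixed $R$, and feeding a constant forcing term through the argument of Proposition~\ref{compprabscont} produces at best $u-w\ge -Ct$ (or an exponentially growing lower bound), which says nothing about $w\le u$. The comparison must hold at each fixed $R$, not only asymptotically as $R\to\infty$; the whole purpose of hypothesis \eqref{as:aplus_gr_aminus} here is precisely to collapse the two truncation errors into the single nonpositive $J_\theta$-integral above.
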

\begin{proof}
Denote $\Delta_R^c:=\X\setminus \Delta_R$. We have
\begin{align*}
  \theta-\theta_R &= \frac{\kn \theta A^-_R + \kl \theta - \kap A^+_R+m}{\kam (\kn A^-_R+\kl)}
  =\frac{\kap (1-A^+_R)-\kn \theta (1-A^-_R)}{\kam  (\kn A^-_R+\kl)}\\
  &=\frac{1}{\kam  (\kn A^-_R + \kl)}\int_{\Delta_R^c}\bigl( \kap  a^+(x)-\kn \theta a^-(x)\bigr)\,dx\geq0,
\end{align*}
by \eqref{as:aplus_gr_aminus}. Therefore,
\begin{equation}\label{thetaRlesstheta}
  0<\theta_R\leq\theta.
\end{equation}
Clearly, \eqref{as:aplus_gr_aminus} and \eqref{thetaRlesstheta} yield
\begin{equation}\label{as:aplus_geq_aminus_R}
\kap a_R^+(x)\geq \theta_R\kam a_R^-(x),\quad x\in\X.
\end{equation}
Thus one can apply Proposition~\ref{prop:compar} to the equation~\eqref{eq:basic_R} using trivial equalities $a^\pm_R(x)=A_R^\pm \tilde{a}^\pm_R(x)$, where the kernels $\tilde{a}^\pm_R(x)=(A_R^\pm)^{-1}a^\pm_R(x)$ are normalized, cf. \eqref{normed}; and the inequality \eqref{as:aplus_geq_aminus_R} is the corresponding analog of \eqref{as:aplus_gr_aminus}, according to \eqref{defofthetaR}. This proves the existence and uniqueness of the solution to \eqref{eq:basic_R} and the bound \eqref{wlessthetaR}.

Next, for $\mathcal{F}$ given by \eqref{Foper}, one gets from \eqref{trkern} and \eqref{eq:basic_R}, that the solution $w$ to \eqref{eq:basic_R} satisfies the following equality
\begin{align}
  (\mathcal{F}w)(x,t)=&-\kap \int_{\Delta_R^c} a^+(y) w(x-y,t)\,dy\notag
\\&+\kn w(x,t)\int_{\Delta_R^c} a^-(y)w(x-y,t)\,dy.\label{dopR}
\end{align}
By \eqref{wlessthetaR}, \eqref{thetaRlesstheta}, \eqref{as:aplus_gr_aminus}, one gets from \eqref{dopR} that
\begin{align*}
(\mathcal{F}w)(x,t)
&\leq-\kap \int_{\Delta_R^c} a^+(y) w(x-y,t)\,dy + \kn\theta\int_{\Delta_R^c} a^-(y)w(x-y,t)\,dy \\
&\leq0=(\mathcal{F}u)(x,t),
\end{align*}
 where $u$ is the solution to \eqref{eq:basic}. Therefore, we may apply Proposition~\ref{prop:fullcomp} to get the statement.
\end{proof}

In the following two propositions we consider results about stability of stationary solutions to \eqref{eq:basic}.

According to the proof of \cite[Theorems 2.2, 3.4]{FT2017a}, which implies Theorem~\ref{thm:existuniq}, the solution $u(x,t)$ to \eqref{eq:basic} may be obtained on an arbitrary time interval $[0,T]$ as follows. There exist $m\in\N$ and $0=:\tau_0<\tau_1<\ldots<\tau_m$ with $\tau_m\geq T$, such that for each $[\tau, \widehat{\tau}]:=[\tau_{k-1},\tau_k]$, $1\leq k\leq m$, there exists $r_k>0$, such that, for any $v\in \x_{\tau,\widehat{\tau}}$ with $0\leq v\leq r_k$,  $u=\lim\limits_{n\to\infty}\Phi_\tau ^n v$  in $\x_{\tau,\widehat{\tau}}$, where  
\begin{align}
&\begin{aligned}
(\Phi_\tau v)(x,t)&:=(Bv)(x,\tau,t)u_\tau(x)\\ &\quad +\int_\tau^t(Bv)(x,s,t)\kap (a^{+}*v)(x,s)\,ds,
\label{eq:exist_uniq_BUC:Phi_v}
\end{aligned}\\
&(Bv)(x,s,t):=\exp\biggl(-\int _{s}^t\bigl(m+(Gv)(x,p)\bigr)\,dp\biggr),\label{eq:exist_uniq_BUC:B}
\end{align}
for $x\in\X$, $t,s\in[\tau,T]$, and $G$ is given by \eqref{eq:defofG}.
By the uniqueness arguments, we will immediately get the following proposition.
\begin{proposition}\label{prop:startwithconst}
  Let $t_0\geq0$ be such that the solution $u$ to \eqref{eq:basic} is a constant in space at the moment of time $t_0$, namely, $u(x,t_0)\equiv u(t_0)\geq0$, $x\in\X$. Then
  this solution will be a constant in space for all further moments of time. In particular, if \eqref{as:chiplus_gr_m} holds (and hence $\beta=\kap -m>0$), then
\begin{equation}\label{homogensol}
  u(x,t)= u(t)=\frac{\theta u(t_0)}{u(t_0) (1-e^{-\beta t}) +\theta e^{-\beta t}}\geq0, \qquad x\in\X, \ t\geq t_0,
\end{equation}
and $u(t)\to \theta$, $t\to\infty$.
\end{proposition}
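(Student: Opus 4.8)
The plan is to exploit the uniqueness of the classical solution together with the explicit construction of the solution via the fixed-point iteration \eqref{eq:exist_uniq_BUC:Phi_v}--\eqref{eq:exist_uniq_BUC:B}. First I would observe that the constant-in-space functions form an invariant set under the iteration map $\Phi_\tau$: if $v(x,t)\equiv v(t)$ does not depend on $x$, then $(a^{\pm}*v)(x,s)=v(s)$ by \eqref{normed}, and hence both $(Gv)(x,p)=\kam v(p)$ and $(\Phi_\tau v)(x,t)$ are independent of $x$. Consequently, if we start the iteration (on each subinterval $[\tau_{k-1},\tau_k]$, beginning with $\tau_0=t_0$) from a spatially constant $v$ with $0\le v\le r_k$, every $\Phi_\tau^n v$ is spatially constant, and so is the limit $u$ in $\x_{\tau,\widehat\tau}$. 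Since the solution is unique (Theorem~\ref{thm:existuniq}) and agrees with this limit, $u(\cdot,t)$ must be spatially constant for all $t\ge t_0$, which gives the first assertion. (Alternatively, one can invoke Proposition~\ref{prop:monot_along_vector_sol} with $u_0$ constant along every direction $\xi\in S^{d-1}$, concluding that $u(\cdot,t)$ is constant along every $\xi$, hence constant in space; but the uniqueness argument is cleaner and does not need \eqref{as:aplus_gr_aminus}.)

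Next, once we know $u(x,t)=u(t)$ depends on $t$ only, all three convolution terms in \eqref{eq:basic} collapse: $(a^{+}*u)(x,t)=u(t)$ and $(Gu)(x,t)=\kl u(t)+\kn u(t)=\kam u(t)$, using \eqref{normed} and \eqref{eq:kaminusissum}. Therefore $u(t)$ solves the scalar logistic ODE
\begin{equation*}
  \dot u(t)=\kap u(t)-m u(t)-\kam u(t)^2 = \beta u(t)\Bigl(1-\frac{u(t)}{\theta}\Bigr),\qquad t\ge t_0,
\end{equation*}
with $u(t_0)$ the prescribed nonnegative initial value, where $\beta=\kap-m>0$ and $\theta=\beta/\kam$ by \eqref{theta_def}. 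This is a Bernoulli equation; substituting $w=1/u$ linearizes it to $\dot w=-\beta w+\beta/\theta$, whose solution with $w(t_0)=1/u(t_0)$ is $w(t)=\tfrac1\theta+\bigl(\tfrac1{u(t_0)}-\tfrac1\theta\bigr)e^{-\beta(t-t_0)}$. Inverting gives precisely \eqref{homogensol} (after the harmless shift so that $t_0$ plays the role of the origin, which matches the stated formula), and the expression is manifestly nonnegative since $u(t_0)\ge0$, $\theta>0$. Finally, letting $t\to\infty$ the exponential $e^{-\beta t}\to0$ (as $\beta>0$) yields $u(t)\to\theta$; one only has to note the degenerate case $u(t_0)=0$, where $u\equiv0$ and the limit statement is trivially adapted.

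The only genuine point requiring care — the main obstacle — is the very first step: justifying that the spatially constant subspace is preserved not just formally but within the framework in which uniqueness holds. One must check that a spatially constant $v$ indeed lies in the relevant space $\x_{\tau,\widehat\tau}$ (it does, constants are in $E$ for every admissible $E$, in particular in $L^\infty(\X)$), that the bound $0\le v\le r_k$ can be met by constants, and that the iteration is carried out consistently across the finite chain of subintervals $\tau_0<\tau_1<\dots<\tau_m$ with the endpoint value on one subinterval serving as the spatially constant initial datum on the next — which is exactly how the solution is built in the proof of \cite[Theorems~2.2, 3.4]{FT2017a}. Everything after that is the routine ODE computation above, so once this bookkeeping is in place the proposition follows.
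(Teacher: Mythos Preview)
Your proof is correct and follows essentially the same route as the paper: the paper's argument is just the single line ``By the uniqueness arguments, we will immediately get the following proposition,'' relying on the iteration \eqref{eq:exist_uniq_BUC:Phi_v}--\eqref{eq:exist_uniq_BUC:B} described right before the statement, and you have spelled out precisely why that iteration preserves spatial constants and then carried out the standard logistic computation. Your observation about the time shift in \eqref{homogensol} (the formula as printed matches the ODE solution with $t-t_0$ in the exponent, i.e.\ effectively $t_0=0$) is also well taken.
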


\begin{remark}
  Note that \eqref{homogensol} solves the classical logistic equation 
  \begin{equation}\label{eq:homogen}
    \frac{d}{dt} u(t)=\kam  u(t) (\theta - u(t)), \quad t>t_0,\quad u(t_0)\geq0.
  \end{equation}
\end{remark}

We are going to study stability of constant stationary solutions  to \eqref{eq:basic}.

\begin{proposition}\label{prop:statsol}
Let \eqref{as:chiplus_gr_m} and \eqref{as:aplus_gr_aminus} hold. Then $u^*\equiv \theta$ is a uniformly and asymptoticaly stable solution to \eqref{eq:basic}, whereas $u_*\equiv 0$ is an unstable solution to~\eqref{eq:basic}.
\end{proposition}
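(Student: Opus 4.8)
The plan is to establish the two assertions separately, using the comparison principle (Proposition~\ref{prop:compar}) together with the explicit spatially-homogeneous solution \eqref{homogensol} of the logistic equation \eqref{eq:homogen}.

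First I would treat the stability of $u^*\equiv\theta$. Fix a small $\eps\in(0,\theta)$ and suppose an initial condition $u_0\in E$ satisfies $\|u_0-\theta\|_E<\eps$, so that $\theta-\eps\leq u_0\leq\theta$ (after first noting that we may restrict to $u_0\le\theta$, since the tube $E_\theta^+$ is invariant by \ref{eq:QBtheta_subset_Btheta}; for $u_0$ slightly above $\theta$ one compares with the constant $\theta$ from above and uses Proposition~\ref{prop:startwithconst} to push it back into the tube instantly). Then I compare $u(\cdot,t)$ from below with the solution starting at the constant $\theta-\eps$ and from above with the solution starting at the constant $\theta$; by Proposition~\ref{prop:compar} these sandwich $u$, and by Proposition~\ref{prop:startwithconst} the lower one is the explicit function \eqref{homogensol} with $u(t_0)=\theta-\eps$, which is increasing in $t$ and converges to $\theta$, while the upper one is identically $\theta$. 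Hence $\theta-\eps\leq u(x,t)\leq\theta$ for all $t\geq0$, giving Lyapunov stability with the same $\eps$; and since the lower bound tends to $\theta$ uniformly in $x$ as $t\to\infty$, we get asymptotic stability. (For the general neighbourhood $\|u_0-\theta\|_E<\delta$ one takes $\delta=\eps$.)

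For the instability of $u_*\equiv 0$, I would exhibit initial data arbitrarily close to $0$ whose solution leaves a fixed neighbourhood of $0$. Take $u_0\equiv r$ for a small constant $r\in(0,\theta)$; then $\|u_0-0\|_E=r$ can be made as small as we like, but by Proposition~\ref{prop:startwithconst} the solution is the explicit homogeneous profile \eqref{homogensol} with $u(t_0)=r$, which increases monotonically and satisfies $u(t)\to\theta$ as $t\to\infty$. Thus for, say, the neighbourhood of radius $\theta/2$ the solution eventually exits it regardless of how small $r$ is, which is exactly instability in the sense of Lyapunov. (Equivalently one invokes \ref{prop:Ql_gr_l}, $Q_t r>r$ for $r\in(0,\theta)$, to see the orbit cannot stay near $0$.)

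I do not expect a serious obstacle here; the only point requiring a little care is the precise handling of initial data that are not already in $E_\theta^+$ in the stability part — one must either argue that the definition of stability only needs to be checked for $u_0$ in the invariant tube (since that is the natural phase space of the semi-flow), or, as indicated above, combine a comparison from above with the constant $\theta$ and the instantaneous relaxation of spatially-constant data given by \eqref{homogensol}. Everything else is a direct application of the comparison principle and of the elementary analysis of the logistic ODE already recorded in \eqref{homogensol}.
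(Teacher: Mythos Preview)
Your approach differs from the paper's. You argue via the comparison principle together with the explicit logistic solution \eqref{homogensol}; the paper instead linearizes, computing $H'(\theta)v=J_\theta*v-(\kap+\kl\theta)v$ and $H'(0)v=\kap(a^+*v)-mv$, and then invokes the principle of linearized stability from \cite{DK1974}: the spectrum of $H'(\theta)$ lies in the disk $\{|z+\kap+\kl\theta|\leq\kap-\kn\theta\}$ strictly inside the left half-plane, while $H'(0)$ has the positive eigenvalue $\kap-m$ on constants. Your route is more elementary and even yields an explicit attraction rate; the paper's is shorter once the abstract theorem is cited and handles two-sided perturbations of $\theta$ at once.

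Your instability argument for $u_*\equiv0$ is correct, and your stability argument for $u^*\equiv\theta$ is correct for perturbations lying in $E_\theta^+$. The gap is the handling of $u_0$ with values above $\theta$: you cannot ``compare with the constant $\theta$ from above'' when $u_0\not\leq\theta$, and nothing pushes the solution into the tube instantly. The obvious repair---comparing from above with the constant $\theta+\eps$---needs a comparison principle on $[0,\theta+\eps]$, but the quasi-monotonicity \eqref{eq:HisQuasiMon} is established only on $E_\theta^+$ and relies on $J_\theta\geq0$; extending it would require $\kap a^+\geq(\theta+\eps)\kn a^-$, strictly stronger than \eqref{as:aplus_gr_aminus} and possibly false wherever $J_\theta$ vanishes. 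So in the nonlocal case $\kn>0$ your argument, as written, does not cover full neighbourhoods of $\theta$ in $E$. Restricting the phase space to $E_\theta^+$, as you also suggest, yields a correct but weaker conclusion than the one the paper proves.
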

\begin{proof}
Let $H$ and $J_\theta$ be given by \eqref{eq:defH} and \eqref{diffofkernels}, correspondingly. 
Find the linear operator $H'(u)$ on $E$: namely, for $v\in E$,
\begin{equation}\label{derofG}
  H'(u)v=\kap (a^+*v)-mv-\kn v(a^-*u)-\kn u(a^-*v)-2\kl uv.
\end{equation}
Therefore, by \eqref{diffofkernels}, 
\[
  H'(\theta)v =J_\theta*v-(\kap +\kl\theta) v.
\]
By \eqref{diffofkernels}, $\int_\X J_\theta(x)\,dx = \kap{-}\kn\theta$, thus, the spectrum $\sigma(A)$ of the operator $Av:=J_\theta *v$ on $\Buc$ is a subset of $\{|z|\leq \kap{-}\kn\theta\}\subset \mathbb{C}$. Therefore,
\[
  \sigma(H'(\theta)) \subset \bigl\{z\in\mathbb{C}\bigm\vert |z+\kap +\kl\theta|\leq \kap{-}\kn\theta\bigr\}.
\]
Therefore, $\sigma(H'(\theta)) \subset \{z\in\mathbb{C}\mid \mathrm{Re}\, z<0\}$. Hence, by e.g. \cite[Chapter VII]{DK1974}, $u^*\equiv\theta$ is uniformly and asymptotically stable solution in the sense of Lyapunov. 

Next, by \eqref{derofG},
$H'(0)v=\kap (a^+*v)-mv$. If \eqref{as:chiplus_gr_m} holds, then the operator $H'(0)$ has an eigenvalue $\kap -m>0$ whose corresponding eigenfunctions will be constants on $\X$. Therefore $\sigma(H'(0))$ has points in the right half-plane and since $H''(0)$ exists, one has, again by \cite[Chapter VII]{DK1974}, that $u_*\equiv0$ is unstable.
\end{proof}

\subsection{Strong maximum principle}
Now we are going to study the maximum principle for solutions to \eqref{eq:basic} in the space $E=\Buc$. 
For this case, we denote $\Utheta:=E_\theta^+$.

We introduce also the following assumption: 
there exist $\rho,\delta>0$ such that, cf. \eqref{diffofkernels},
\begin{equation}\label{as:aplus-aminus-is-pos}\tag{\ref{as:aplus-aminus-is-pos1d}${}'$}
    J_\theta(x)=\kap a^{+}(x)-\kn \theta a^{-}(x)\geq\rho \text{ \ for a.a. }  |x|\leq\delta.
\end{equation}
Clearly, \eqref{as:aplus-aminus-is-pos} implies \eqref{as:aplus-aminus-is-pos1d} and implies also that the following condition holds: there exist $\rho,\delta>0$, such that
\begin{equation}\label{as:a+nodeg}\tag{\ref{as:a+nodeg1d}${}'$}
a^{+}(x)\geq\rho \text{ \ for a.a. } |x|\leq \delta.
\end{equation}

It is straightforward to check that, under assumptions \eqref{as:chiplus_gr_m}, \eqref{as:aplus_gr_aminus}, \eqref{as:a+nodeg}, one can apply \cite[Proposition 5.2]{FT2017a}, that yields the following statement about strict positivity of solutions to \eqref{eq:basic}.
\begin{proposition}\label{prop:u_gr_0}
Let $E=\Buc$ and \eqref{as:chiplus_gr_m}, \eqref{as:aplus_gr_aminus}, \eqref{as:a+nodeg}
hold. Let $u_0\in \Utheta$, $u_0\not\equiv0$, $u_0\not\equiv\theta$, be~the~initial condition to \eqref{eq:basic}, and $u\in\Xinf$ be the corresponding solution. Then
\[
u(x,t)>\inf_{\substack{y\in\X\\ s>0}}u(y,s)\geq0, \qquad x\in\X, t>0.
\]
\end{proposition}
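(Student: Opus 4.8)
The plan is to establish the estimate by a Duhamel-type comparison combined with a step-by-step propagation of strict positivity along $a^+$; this is also the content of \cite[Proposition~5.2]{FT2017a}, whose hypotheses one could instead verify directly. First, if $u_0$ is constant, say $u_0\equiv r$, then $r\in(0,\theta)$ by assumption, and Proposition~\ref{prop:startwithconst} gives $u(x,t)=u(t)$ with $u$ strictly increasing by \eqref{homogensol}, so $\inf_{y\in\X,\,s>0}u(y,s)=r$ and the claim is immediate. Assume from now on that $u_0$ is non-constant. By Proposition~\ref{prop:compar}, $0\le u\le\theta$ on $\X\times\R_+$; set $\iota:=\inf_{y\in\X,\,s>0}u(y,s)$, so $0\le\iota\le\theta$, and, by continuity of $t\mapsto u(\cdot,t)$ at $t=0$, $\iota\le\inf_{x\in\X}u_0(x)$. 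Hence $w_0:=u_0-\iota\in\Buc$ is nonnegative and, $u_0$ being non-constant, $w_0\not\equiv0$.

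Put $w:=u-\iota\ge0$. Using $a^+*\iota=\iota$ (by \eqref{normed}), $G\iota=\kam\iota$ (by \eqref{eq:kaminusissum}) and $\kap-m=\kam\theta$ (by \eqref{theta_def}), equation \eqref{eq:basic} rewrites as
\[
  \frac{\partial w}{\partial t}=\kap(a^+*w)-mw-wGw-\kam\iota\, w+\iota\bigl(\kam(\theta-\iota)-Gw\bigr).
\]
Since $0\le w\le\theta-\iota$ one has $Gw\le\kam(\theta-\iota)$, so the last term is nonnegative; dropping it together with $-wGw$ and estimating the remaining zero-order terms crudely, we get, with $C:=m+\kam\theta$,
\[
  \frac{\partial w}{\partial t}\ge\kap(a^+*w)-Cw,\qquad x\in\X,\ t>0.
\]
Therefore $\tilde w:=e^{Ct}w\ge0$ satisfies $\partial_t\tilde w\ge\kap(a^+*\tilde w)\ge0$; integrating in $t$ and letting the lower limit tend to $0$ (by continuity of $u$ at $t=0$) yields the integral inequality
\[
  \tilde w(x,t)\ge w_0(x)+\kap\int_0^t(a^+*\tilde w)(x,s)\,ds,\qquad x\in\X,\ t>0.
\]

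Finally, since $w_0\not\equiv0$ is nonnegative and continuous, there are $x_*\in\X$, $r\in(0,\delta]$ and $\eps>0$ with $w_0\ge\eps$ on the ball $B_r(x_*)$. By \eqref{as:a+nodeg}, $a^+\ge\rho$ a.e.\ on $\{|x|\le\delta\}$, and elementary geometry provides a constant $v_0=v_0(d,\delta)>0$ such that for every $R\ge r$ and every $x$ with $|x-x_*|\le R+\tfrac{\delta}{2}$ the set $\{y:|x-y|\le\delta\}\cap B_R(x_*)$ has Lebesgue measure at least $v_0$. Feeding the trivial bound $\tilde w\ge w_0\ge\eps$ on $B_r(x_*)$ into the integral inequality above and iterating, one proves by induction on $n\ge0$ that
\[
  \tilde w(x,t)\ge\eps\,\frac{(\kap\rho v_0\, t)^n}{n!}\qquad\text{for all }x\in B_{r+n\delta/2}(x_*),\ t>0 .
\]
Since $\bigcup_{n\ge0}B_{r+n\delta/2}(x_*)=\X$ and the right-hand side is strictly positive for $t>0$, it follows that $\tilde w(x,t)>0$, i.e.\ $u(x,t)=\iota+e^{-Ct}\tilde w(x,t)>\iota$, for all $x\in\X$ and $t>0$; and $\iota\ge0$ by Proposition~\ref{prop:compar}. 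The points requiring care are the favourable sign of the zero-order perturbation produced by the shift $u\mapsto u-\iota$ (so that $w$ obeys a clean differential inequality) and the uniform-in-$n$ lower bound $v_0$ on the overlap volumes in the induction; both are elementary, and the whole scheme is precisely what is abstracted in \cite[Proposition~5.2]{FT2017a}.
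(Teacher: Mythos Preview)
Your argument is correct and is essentially the specialisation of \cite[Proposition~5.2]{FT2017a} that the paper invokes without details; the Duhamel-type lower bound for $\tilde w=e^{Ct}(u-\iota)$ and the ball-by-ball propagation via \eqref{as:a+nodeg} are exactly the ingredients behind that cited result. Two small expository slips do not affect validity: first, one cannot literally ``drop'' the term $-wGw$ (it is nonpositive), but the desired inequality with $C=m+\kam\theta$ follows because $Gw+\kam\iota=Gu\le\kam\theta$, so $-mw-wGw-\kam\iota w\ge -(m+\kam\theta)w$; second, the overlap volume $v_0$ must be allowed to depend on $r$ as well as on $d,\delta$ (for $R=r$ small the intersection has measure at most $|B_r|$), which is harmless since $r$ is fixed by $w_0$.
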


In contrast to the case of the infimum, the solution to \eqref{eq:basic} may attain its supremum but not the value $\theta$. As a matter of fact, under \eqref{as:aplus-aminus-is-pos}, a much stronger statement than unattainability of $\theta$ does hold.

\begin{theorem}\label{thm:strongmaxprinciple}
Let  $E=\Buc$ and \eqref{as:chiplus_gr_m}, \eqref{as:aplus_gr_aminus}, \eqref{as:aplus-aminus-is-pos}
hold. Let $u_1,u_2\in\Xinf$ be two solutions to \eqref{eq:basic}, such that $0\leq u_1(x,t)\leq u_2(x,t)\leq\theta$, $x\in\X$, $t\geq0$. Then either $u_1(x,t)= u_2(x,t)$, $x\in\X$, $t\geq0$ or
$u_1(x,t)< u_2(x,t)$, $x\in\X$, $t>0$.
\end{theorem}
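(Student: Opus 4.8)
The plan is to set $v := u_2 - u_1 \ge 0$ and show a dichotomy: either $v \equiv 0$ for all $t \ge 0$, or $v(x,t) > 0$ everywhere for $t > 0$. First I would derive the differential inequality satisfied by $v$. Subtracting the two copies of \eqref{eq:basic} and using the quasi-monotonicity computation already performed before \eqref{eq:HisQuasiMon}, one gets, pointwise in $x$ and $t$,
\begin{equation*}
\frac{\partial v}{\partial t}(x,t) \ge (J_\theta * v)(x,t) - p\,v(x,t), \qquad p = \kap + \theta\kl,
\end{equation*}
where we have discarded the nonnegative term $\kn u_1 (a^- * v) + \kl u_1 v \ge 0$ coming from $-u_1(Gu_2 - Gu_1)$ and bounded $(\kap + \theta\kl)$ from above as in the derivation of \eqref{eq:HisQuasiMon}. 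Writing $w(x,t) := e^{pt} v(x,t) \ge 0$, this becomes $\frac{\partial w}{\partial t} \ge e^{pt}(J_\theta * (e^{-pt}w)) = (J_\theta * w)(x,t)$ is not quite right since $J_\theta$ may be signed; instead I keep the inequality in the form $\frac{\partial w}{\partial t}(x,t) \ge (\kap a^+ * w)(x,t) - (m + p)w(x,t) + e^{pt}\bigl(\text{nonneg.}\bigr)$, and since $\kap a^+ * w \ge \kap a^+ * w \ge 0$ we certainly have $\frac{\partial w}{\partial t} \ge -(m+p) w$, hence $w(x,t) \ge 0$ is preserved — but that only re-proves $v \ge 0$. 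The sharper route is to retain the kernel term: from $\frac{\partial w}{\partial t}(x,t) \ge (\kap a^+ * w)(x,t) - (m+p) w(x,t)$ and Duhamel/Gronwall one obtains, for $0 \le s < t$,
\begin{equation*}
w(x,t) \ge e^{-(m+p)(t-s)} w(x,s) + \kap \int_s^t e^{-(m+p)(t-r)} (a^+ * w)(x,r)\,dr.
\end{equation*}

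Next I would run the standard propagation-of-positivity argument off this integral inequality, exactly as in the cited \cite[Proposition 5.2]{FT2017a} / the proof of Proposition~\ref{prop:u_gr_0}. Suppose $v \not\equiv 0$; then there is $t_0 \ge 0$ and $x_0 \in \X$ with $v(x_0,t_0) > 0$, and by continuity $w(y,t_0) \ge \eta > 0$ for $y$ in a neighbourhood $U$ of $x_0$. Feeding this into the Duhamel inequality at times $t > t_0$ and using assumption \eqref{as:a+nodeg} (which follows from \eqref{as:aplus-aminus-is-pos}, as noted after it), namely $a^+(z) \ge \rho$ for $|z| \le \delta$, gives $w(x,t) > 0$ for all $x$ within distance $\delta$ of $U$ and all $t \in (t_0, t_0 + \text{anything}]$; iterating the $\delta$-step covers all of $\X$ (since $\X = \bigcup_n B_{n\delta}(x_0)$) after finitely many steps for each fixed $x$, and a diagonal/monotonicity argument in $t$ shows $w(x,t) > 0$ for every $x \in \X$ and every $t > t_0$. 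It remains to push the conclusion down to all $t > 0$: if $t_0 > 0$, one uses backward uniqueness — more precisely, if $v(x_1, t_1) = 0$ for some $t_1 \in (0, t_0)$ and some $x_1$, then by the same positivity spreading applied from $t_1$ onward (if $v \not\equiv 0$ near $t_1$) or, if $v \equiv 0$ on a time-slab, by the uniqueness in Theorem~\ref{thm:existuniq} run forward from that slab, one forces $v \equiv 0$ identically, contradicting $v(x_0,t_0) > 0$. Hence either $v \equiv 0$ on $\X \times \R_+$ or $v(x,t) > 0$ for all $x$, all $t > 0$.

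The main obstacle I anticipate is the careful handling of the dichotomy near $t = 0$ and the possibility that the set $\{v = 0\}$ is nonempty at some positive time: the forward spreading argument only shows positivity strictly \emph{after} the first time $v$ is somewhere positive, so one genuinely needs the contrapositive — the zero set, if it meets some slab $\X \times (t', t'')$, must be all of $\X \times \R_+$ — and this is where one invokes uniqueness (Theorem~\ref{thm:existuniq}, or rather Proposition~\ref{compprabscont}) together with the observation that $v \equiv 0$ on a slab means $u_1$ and $u_2$ agree there and hence agree for all later times. A secondary technical point is justifying the Duhamel representation for the \emph{inequality} satisfied by $w$; this is routine via the integrating-factor argument used throughout \cite{FT2017a} (cf.\ \eqref{eq:exist_uniq_BUC:Phi_v}--\eqref{eq:exist_uniq_BUC:B} and Proposition~\ref{compprabscont}), comparing $w$ with the solution of the corresponding linear equation with the same initial data. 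Everything else — the quasi-monotone differential inequality, the use of \eqref{as:a+nodeg}, the covering of $\X$ by $\delta$-balls — is standard.
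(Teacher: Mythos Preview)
Your plan has the right shape but one concrete misstep. You derive $\partial_t v \ge J_\theta*v - pv$ correctly from quasi-monotonicity, then worry that ``$J_\theta$ may be signed'' and switch to an inequality with $\kap a^+$ in place of $J_\theta$. Two problems: first, $J_\theta\ge 0$ pointwise by the assumed~\eqref{as:aplus_gr_aminus}, so the worry is groundless; second, the replacement inequality $\partial_t w \ge \kap a^+*w-(m+p)w+(\text{nonneg.})$ is actually \emph{false}. From the exact identity $\partial_t v = \kap a^+*v - mv - v\,Gu_2 - u_1\,Gv$, the term $-\kn u_1\,(a^-*v)$ is nonlocal and $\le 0$; it cannot be absorbed into a multiple of $v$, so you cannot isolate $\kap a^+*v$ on the right of a lower bound. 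Keep $J_\theta$: comparison with the linear problem $\partial_t\underline v = J_\theta*\underline v - p\underline v$, together with~\eqref{as:aplus-aminus-is-pos} (a lower bound on $J_\theta$ on $B_\delta(0)$, not on $a^+$), runs the spreading directly. Your ``push down to $t>0$'' is also overcomplicated: if $v\not\equiv 0$ then necessarily $v(\cdot,0)\not\equiv 0$ (else forward uniqueness for~\eqref{eq:basic} forces $u_1\equiv u_2$), and you simply spread from $t=0$; no backward-uniqueness casework is needed.

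The paper takes the dual route. It keeps the exact linear \emph{equation} satisfied by $w=u_2-u_1$ (not just your inequality) and argues the contrapositive: if $w(x_0,t_0)=0$ for some $t_0>0$, then $\partial_t w(x_0,t_0)=0$ at this minimum, and the equation at $(x_0,t_0)$ reduces to $0=(J_\theta*w)(x_0,t_0)+\kn(\theta-u_1)(a^-*w)(x_0,t_0)$. Both summands are nonnegative, so $(J_\theta*w)(x_0,t_0)=0$, and~\eqref{as:aplus-aminus-is-pos} forces $w(\cdot,t_0)= 0$ on $B_\delta(x_0)$; iterating gives $w(\cdot,t_0)\equiv 0$ on $\X$. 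Forward uniqueness handles $t\ge t_0$, and --- this is what retaining the equation buys --- time-reversal of the \emph{linear} equation handles $t<t_0$. Your (corrected) route avoids the time-reversal by anchoring at $t=0$; the paper's avoids the Duhamel/spreading machinery by a pointwise minimum argument.
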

\begin{proof}
Let $u_1(x,t)\leq u_2(x,t)$, $x\in\X$, $t\geq0$, and suppose that there exist $t_0>0$, $x_0\in\X $,
such that $u_1(x_0,t_0)=u_2(x_0,t_0)$. Define $w:=u_2-u_1\in\Xinf$. Then $w(x,t)\geq0$ and $w(x_0,t_0)=0$, hence $\frac{\partial}{\partial t}w(x_0,t_0)=0$. Since both $u_1$ and $u_2$ solve \eqref{eq:basic}, one easily gets that $w$ satisfies the following linear equation
\begin{multline}\label{lineareqw}
  \frac{\partial}{\partial t}w(x,t) = (J_\theta* w)(x,t) + \kn(\theta-u_1(x,t))(a^-*w)(x,t)\\
	                                     -w(x,t)\bigl(\kl \bigl(u_2(x,t)+u_1(x,t)\bigl) + \kn(a^-*u_2)(x,t) +m \bigr);
\end{multline}
or, at the point $(x_0,t_0)$, we will have
\begin{equation}\label{atx0t0}
  0 = (J_\theta* w)(x_0,t_0) + \kn(\theta-u_1(x_0,t_0))(a^-*w)(x_0,t_0).
\end{equation}
Since the both summands in \eqref{atx0t0} are nonnegative, one has $(J_\theta* w)(x_0,t_0)=0$. Then, by \eqref{as:aplus-aminus-is-pos}, we have that $w(x,t_0)=0$, for all $x\in B_\delta (x_0)$. Using the same arguments as in the proof of \cite[Proposition 5.2]{FT2017a},, one gets that $w(x,t_0)=0$, $x\in\X$. Then, by Proposition~\ref{prop:startwithconst}, $w(x,t)=0$, $x\in\X$, $t\geq t_0$. Finally, one can reverse the time in the linear equation \eqref{lineareqw} (cf.~the proof of \cite[Proposition 5.2]{FT2017a}), and the uniqueness arguments imply that $w\equiv 0$, i.e. $u_1(x,t)= u_2(x,t)$, $x\in\X$, $t\geq0$. The statement is proved.
\end{proof}

By choosing $u_2\equiv\theta$ in Theorem~\ref{thm:strongmaxprinciple}, we immediately get the following
\begin{corollary}\label{cor:lesstheta}
Let  $E=\Buc$ and \eqref{as:chiplus_gr_m}, \eqref{as:aplus_gr_aminus}, \eqref{as:aplus-aminus-is-pos}
hold. Let $u_0\in \Utheta$, $u_0\not\equiv\theta$, be~the~initial condition to \eqref{eq:basic}, and $u\in\Xinf$ be the corresponding solution. Then
$u(x,t)<\theta$, $x\in\X$, $t>0$.
\end{corollary}

\section{Traveling waves}\label{sec:tr-waves}

Through this section, $E=L^\infty(\X)$. Similarly to the above, we denote by $\Y_\infty$ the subset of $\x_\infty$ of all continuously differentiable mappings from $(0,\infty)$ to $E$.

Recall that $\M$ denotes the set of all decreasing and right-continuous functions $f:\R\to[0,\theta]$.

\begin{remark}\label{rem:inclus}
  There is a natural embedding of $\M$ into $L^\infty(\R)$. According to this, for a function $f\in L^\infty(\R)$, the inclusion $f\in\M$ means that there exists $g\in\M$, such that $f=g$ a.s. on $\R$.
\end{remark}

Recall also the definition of a traveling wave solution.
\begin{definition}\label{def:trw}
A function $u\in \Y_\infty$ is said to be a traveling
wave solution to \eqref{eq:basic} with
a speed $c\in\R$ and in a direction $\xi\in S^{d-1}  $ if  there
exists a profile $\psi\in\M$, such that \eqref{eq:deftrw} holds. 
\end{definition}

We will use some ideas and results from \cite{Yag2009}.

To study traveling wave solutions to \eqref{eq:basic}, it is natural to consider
the corresponding initial conditions of the form
 \begin{equation}\label{trwvincond}
u_0(x)=\psi(x\cdot\xi),
\end{equation}
for some $\xi\in S^{d-1}  $, $\psi\in\M$. Then the solutions will have a special form as well, namely, the following proposition holds.
\begin{proposition}\label{prop:monot_sol}
Let $\xi\in S^{d-1}  $, $\psi\in\M$, and an initial condition to \eqref{eq:basic} be given by
$u_0(x)=\psi(x\cdot\xi)$, a.a.\;$x\in\X$; let also $u\in\x_\infty$ be the corresponding solution. Then there exist a function $\phi:\R\times\R_+\to[0,\theta]$, such that $\phi(\cdot,t)\in\M$, for any $t\geq0$, and
\begin{equation}\label{repres}
  u(x,t)=\phi(x\cdot\xi,t),\quad t\geq0,\ \mathrm{a.a.}\ x\in\X.
\end{equation}

Moreover, there exist functions $\widecheck{a}^\pm$ (depending on $\xi$) on $\R$ with
$0\leq \widecheck{a}^\pm\in L^1(\R)$, $\int_\R \widecheck{a}^\pm(s)\,ds=1$, such that $\phi$ is a solution to the following one-dimensional version of \eqref{eq:basic}:
\begin{equation}
  \begin{cases}
  	\begin{aligned}
			\dfrac{\partial \phi}{\partial t}(s,t)&=\kap (\widecheck{a}^{+}*\phi)(s,t)-m\phi(s,t) -\kl \phi^2(s,t) \\&\quad
				-\kn\phi(s,t)(\widecheck{a}^{-}*\phi)(s,t), \qquad t>0, \ \mathrm{a.a.}\ s\in\R,
		\end{aligned}\\
		\phi(s,0)=\psi(s),\qquad \mathrm{a.a.}\ s\in\R.
	\end{cases}\label{eq:basic_one_dim}
\end{equation}
\end{proposition}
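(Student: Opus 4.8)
The plan is to show that the solution $u$ inherits from $u_0$ the property of depending on $x$ only through the scalar $x\cdot\xi$, to identify $\widecheck{a}^\pm$ as the marginals of $a^\pm$ in the direction $\xi$, and then to substitute the resulting ansatz into \eqref{eq:basic}.

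First I would reduce to one space variable. For any unit vector $\eta$ with $\eta\cdot\xi=0$ we have $u_0(x+s\eta)=\psi\bigl((x+s\eta)\cdot\xi\bigr)=\psi(x\cdot\xi)$, so $u_0$ is constant along $\eta$; by the translation invariance of \eqref{eq:basic}, cf.~\eqref{eq:oldA7}, together with the uniqueness part of Theorem~\ref{thm:existuniq} (equivalently, by Proposition~\ref{prop:monot_along_vector_sol}), $u(\cdot,t)$ is then constant along $\eta$ for every $t\geq0$. Since $\eta$ ranges over all unit vectors orthogonal to $\xi$, for each $t$ the function $u(\cdot,t)$ is a.e.\ equal to one that is constant on every hyperplane $\{x\cdot\xi=s\}$; this yields a function $\phi(\cdot,t):\R\to\R$ with $u(x,t)=\phi(x\cdot\xi,t)$ for a.a.\ $x$, i.e.\ \eqref{repres}. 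Because $\psi$ is decreasing, $u_0$ is decreasing along $\xi$, and $0\leq u_0\leq\theta$; hence, by Proposition~\ref{prop:monot_along_vector_sol} and Proposition~\ref{prop:compar}, $u(\cdot,t)$ is decreasing along $\xi$ with values in $[0,\theta]$, so that — after passing to the right-continuous modification of $\phi(\cdot,t)$ and invoking Remark~\ref{rem:inclus} — $\phi(\cdot,t)\in\M$. Finally, $\|u(\cdot,t)\|_{L^\infty(\X)}=\|\phi(\cdot,t)\|_{L^\infty(\R)}$ and the time difference quotients transfer verbatim, so $\phi$, viewed as a map $\R_+\to L^\infty(\R)$, inherits from $u$ the continuity on $\R_+$ and the $C^1$-regularity on $(0,\infty)$.

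Next I would define the one-dimensional kernels as the marginals
\begin{equation}\label{eq:sketch_kern}
  \widecheck{a}^\pm(s):=\int_{\{x\cdot\xi=s\}}a^\pm(x)\,dx,\qquad \text{a.a. } s\in\R,
\end{equation}
using the orthogonal splitting $\X=\R\xi\oplus\xi^\perp$; Fubini's theorem shows these are finite for a.a.\ $s$, that $0\leq\widecheck{a}^\pm\in L^1(\R)$, and that $\int_\R\widecheck{a}^\pm(s)\,ds=\int_\X a^\pm(x)\,dx=1$ by \eqref{normed}. The same change of variables gives, for any $g\in L^\infty(\R)$ and $f(x):=g(x\cdot\xi)$, the identity $(a^\pm*f)(x)=(\widecheck{a}^\pm*g)(x\cdot\xi)$ for a.a.\ $x$. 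Applying this to $u(\cdot,t)$, inserting \eqref{repres} into \eqref{eq:basic} and \eqref{eq:defofG}, using $u\,Gu=\kl\phi^2+\kn\phi\,(\widecheck{a}^-*\phi)$, and noting that $x\mapsto x\cdot\xi$ is surjective onto $\R$, I would obtain exactly the evolution equation in \eqref{eq:basic_one_dim}; the initial condition $\phi(\cdot,0)=\psi$ is \eqref{trwvincond}.

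The only delicate points are measure-theoretic bookkeeping: the slices in \eqref{eq:sketch_kern} being defined for a.a.\ $s$, the $\widecheck{a}^\pm$ being honest probability densities, and the convolution identity $(a^\pm*f)(x)=(\widecheck{a}^\pm*g)(x\cdot\xi)$ — all consequences of Fubini — together with the (routine) transfer of the time-regularity and of the monotone, right-continuous structure from $u$ to $\phi$. No analytic ingredient beyond those already established in Section~\ref{sec:semiflow} is needed, so I expect the reduction itself, rather than any single estimate, to be the bulk of the work.
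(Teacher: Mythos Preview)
Your proposal is correct and follows essentially the same route as the paper: constancy of $u(\cdot,t)$ along directions orthogonal to $\xi$ via Proposition~\ref{prop:monot_along_vector_sol}, monotonicity along $\xi$ by the same proposition, the definition of $\widecheck{a}^\pm$ as the marginals of $a^\pm$ in the direction $\xi$ (your \eqref{eq:sketch_kern} is exactly \eqref{apm1dim}), and the Fubini-based convolution identity $(a^\pm*f)(x)=(\widecheck{a}^\pm*g)(x\cdot\xi)$ leading to \eqref{eq:basic_one_dim}. If anything, you are slightly more careful than the paper in making the right-continuous modification and the transfer of time regularity explicit.
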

\begin{proof}
Choose any $\eta\in S^{d-1}  $ which is orthogonal to the $\xi$. Then the initial condition $u_0$ is constant along $\eta$, indeed, for any $s\in\R$,
\[
	u_0(x+s\eta)=\psi((x+s\eta)\cdot\xi)=\psi(x\cdot\xi)=u_0(x),\quad \mathrm{a.a.}\ x\in\X.
\]
Then, by Proposition~\ref{prop:monot_along_vector_sol}, for any fixed $t>0$, the solution $u(\cdot,t)$ is constant along $\eta$ as well. Next, for any $\tau\in\R$, there exists $x\in\X$ such that $x\cdot\xi=\tau$; and, clearly, if $y\cdot\xi=\tau$ then $y=x+s\eta$, for some $s\in\R$ and some $\eta$ as above. Therefore, if we just set, for a.a. $x\in\X$, $\phi(\tau,t):=u(x,t)$, $t\geq0$, this definition will be correct a.e.\! in $\tau\in\R$; and it will give \eqref{repres}. Next, for a.a. fixed $x\in\X$, $u_0(x+s\xi)=\psi(x\cdot\xi+s)$ is decreasing in $s$, therefore, $u_0$ is decreasing along the $\xi$, and by Proposition~\ref{prop:monot_along_vector_sol},
$u(\cdot,t)$, $t\geq0$, will be decreasing along the $\xi$ as well. The latter means that, for any $s_1\leq s_2$, we have, by \eqref{repres},
\[
	\phi(x\cdot\xi+s_1,t)=u(x+s_1\xi,t)\geq u(x+s_2\xi,t)=\phi(x\cdot\xi+s_2,t),
\]
and one can choose in the previous any $x$ which is orthogonal to $\xi$ to prove that $\phi$ is decreasing in the first coordinate.

To prove the second statement, for $d\geq2$, choose any $\{\eta_{1},\ \eta_{2},\ ...,\ \eta_{d-1}\}\subset S^{d-1}  $ which form a complement of $\xi\in S^{d-1}  $ to an orthonormal basis in $\X $.
Then, for a.a.\,\,$x\in\X$, with $x=\sum_{j=1}^{d-1}\tau_j\eta_j+s\xi$, $\tau_1,\ldots,\tau_{d-1},s\in\R$, we have (using an analogous expansion of $y$ inside the integral below an taking into account that any linear transformation of orthonormal bases preserves volumes)
\begin{align}
&\quad (a^\pm*u)(x,t)=\int_\X a^\pm(y)u(x-y,t)dy\notag\\
&=\int_\X a^\pm\biggl(\sum_{j=1}^{d-1}\tau_j'\eta_j+s'\xi\biggr)\,
    u\biggl(\sum_{j=1}^{d-1}(\tau_j-\tau_j')\eta_j+(s-s')\xi,t\biggr)\,d\tau_{1}'\ldots d\tau_{d-1}'ds'\notag\\
&=\int_\R\Biggl(\int_{\R^{d-1}}a^\pm\biggl(\sum_{j=1}^{d-1}\tau_j'\eta_j+s'\xi\biggr)\,
d\tau_1'\ldots d\tau_{d-1}'\Biggr)u\bigl((s-s')\xi,t\bigr)\,ds',\label{reducingto1dim}
\end{align}
where we used again Proposition~\ref{prop:monot_along_vector_sol} to show that $u$ is constant along the vector $\eta=\sum_{j=1}^{d-1}(\tau_j-\tau_j')\eta_j$ which is orthogonal to the $\xi$.

Therefore, one can set
\begin{equation}\label{apm1dim}
\widecheck{a}^\pm(s):=\begin{cases}
\displaystyle \int_{\R^{d-1}} a^\pm(\tau_1\eta_1+\ldots+\tau_{d-1}\eta_{d-1}+s\xi)\,d\tau_1\ldots d\tau_{d-1}, &d\geq2,\\[3mm]
a^\pm(s\xi), &d=1.
\end{cases}
\end{equation}
It is easily seen that $\widecheck{a}^\pm=\widecheck{a}^\pm_\xi$ does not depend on the choice of $\eta_1,\ldots,\eta_{d-1}$, which constitute a basis in the space $H_\xi:=\{x\in\X\mid x\cdot\xi=0\}=\{\xi\}^\bot$.
Note that, clearly,
\begin{equation}\label{cleareq}
\int_\R \widecheck{a}^\pm(s)\,ds=\int_\X a^\pm(y)\,dy=1.
\end{equation}
Next, by \eqref{repres}, $u\bigl((s-s')\xi,t\bigr)=\phi(s-s',t)$, therefore, \eqref{reducingto1dim} may be rewritten as
\[
(a^\pm*u)(x,t)=\int_\R \widecheck{a}^\pm(s')\phi(s-s',t\bigr)\,ds'=:(\widecheck{a}^\pm*\phi)(s,t),
\]
where $s=x\cdot\xi$. The rest of the proof is obvious now.
\end{proof}
\begin{remark}\label{rem:multi-one}
Let $\xi\in S^{d-1} $ be fixed and $\widecheck{a}^\pm$ be defined by \eqref{apm1dim}. Let $\phi$ be a traveling wave solution to the equation \eqref{eq:basic_one_dim} (in the sense of Definition~\ref{def:trw}, for $d=1$) in the direction $1\in S^0=\{-1,1\}$, with a profile $\psi\in\M$ and a speed $c\in\R$. Then the function $u$ given by
\begin{equation}\label{tw1d}
u(x,t)=\psi(x\cdot\xi-ct)=\psi(s-ct)=\phi(s,t),
\end{equation}
for $x\in\X$, $t\geq0$, $s=x\cdot\xi\in\R$,
is a traveling wave solution to \eqref{eq:basic} in the direction $\xi$, with the profile $\psi$ and the speed $c$.
\end{remark}

\begin{remark}\label{incrinsteadofdecr}
  One can realize all previous considerations for increasing traveling wave, increasing solution along a vector $\xi$ etc. Indeed, it is easily seen that the function $\tilde{u}(x,t)=u(-x,t)$ with the initial condition $\tilde u_0(x)=u_0(-x)$ is a solution to the equation \eqref{eq:basic} with $a^\pm$ replaced by $\tilde{a}^\pm(x)=a^\pm(-x)$; note that $(a^\pm*u)(-x,t)=(\tilde{a}^\pm*\tilde{u})(x,t)$.
\end{remark}

\begin{remark}\label{shiftoftrw}
  It is a straightforward application of \eqref{eq:QTy=TyQ}, that if $\psi\in\M$, $c\in\R$ gets \eqref{eq:deftrw} then, for any $s\in\R$, $\psi(\cdot+s)$ is a traveling wave to \eqref{eq:basic} with the same $c$.
\end{remark}

We can prove now the following simple statement, which implies, in particular, the property \ref{prop:Q_Mtheta} in Theorem~\ref{thm:Qholds}. Consider one-dimensional equation \eqref{eq:basic_one_dim}, where $\widecheck{a}^\pm$ are given by \eqref{apm1dim}. The latter equality together with \eqref{as:aplus_gr_aminus} imply \eqref{as:aplus_gr_aminus-intro} that is equivalent to 
\begin{equation}\label{acheckpos}
  \kap \widecheck{a}^+(s)\geq \kn \theta \widecheck{a}^-(s), \quad \text{a.a.} \ s\in\R.
\end{equation} 
\begin{proposition}\label{prop:Qtilde}
Let \eqref{as:chiplus_gr_m} and  \eqref{as:aplus_gr_aminus-intro} hold, and let $\xi\in S^{d-1}  $ be fixed.
  Define, for an arbitrary $t>0$, the mapping $\widetilde{Q}_{t}:L^{\infty}(\R)\to L^{\infty}(\R)$ as follows: $\widetilde{Q}_t\psi(s)=\phi(s,t)$, $s\in\R$, where $\phi:\R\times\R_+\to[0,\theta]$ solves \eqref{eq:basic_one_dim} with $0\leq\psi\in L^{\infty}_{+}(\R)$. Then such a $\widetilde{Q}_t$ is well-defined and satisfies all properties of Theorem~\ref{thm:Qholds} (with $d=1$). 
\end{proposition}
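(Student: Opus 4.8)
The plan is to recognise that the one-dimensional equation \eqref{eq:basic_one_dim} is itself an instance of \eqref{eq:basic}, in spatial dimension $d=1$, with the probability densities $a^\pm$ replaced by $\widecheck{a}^\pm$ and the constants $\kap,m,\kl,\kn$ left untouched; everything then follows by quoting Theorems~\ref{thm:existuniq} and~\ref{thm:Qholds} for that equation. As a preliminary I would record that the $\widecheck{a}^\pm$ defined by \eqref{apm1dim} are genuine probability densities on $\R$: they are nonnegative, and, because $a^\pm\in L^1(\X)$ and the reparametrisation $x=\sum_{j=1}^{d-1}\tau_j\eta_j+s\xi$ is volume preserving, Tonelli's theorem together with \eqref{cleareq} gives $\widecheck{a}^\pm\in L^1(\R)$ with $\int_\R\widecheck{a}^\pm(s)\,ds=1$. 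Since the constants are unchanged, \eqref{eq:kaminusissum} still holds and the positive equilibrium of \eqref{eq:basic_one_dim} is again $\theta$ from \eqref{theta_def}.

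Well-definedness of $\widetilde{Q}_t$ is then immediate. Theorem~\ref{thm:existuniq} is valid for every $d\geq1$, so applying it to \eqref{eq:basic_one_dim} with $E=L^\infty(\R)$ produces, for each $0\leq\psi\in L^\infty(\R)$, a unique classical solution $\phi$ on $[0,\infty)$; hence $\widetilde{Q}_t\psi:=\phi(\cdot,t)$ is a well-defined element of $L^\infty(\R)$ on the cone $\{0\leq\psi\in L^\infty(\R)\}$, and, exactly as for $Q_t$, the family $(\widetilde{Q}_t)_{t\geq0}$ is a continuous semi-flow there. (When in addition $\psi\leq\theta$, property \ref{eq:QBtheta_subset_Btheta} below shows that $\phi(\cdot,t)$ stays in $[0,\theta]$.)

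It remains to check the hypotheses of Theorem~\ref{thm:Qholds} for \eqref{eq:basic_one_dim}. Assumption \eqref{as:chiplus_gr_m} involves the constants only and is unchanged. The one-dimensional analogue of the reinforced assumption \eqref{as:aplus_gr_aminus} is precisely the inequality \eqref{acheckpos}, $\kap\widecheck{a}^+(s)\geq\kn\theta\widecheck{a}^-(s)$ for a.a.\ $s\in\R$; and this is nothing but \eqref{as:aplus_gr_aminus-intro}, since by \eqref{apm1dim} and Fubini's theorem the hyperplane integral $\int_{\{x\cdot\xi=s\}}J_\theta(x)\,dx$ equals $\kap\widecheck{a}^+(s)-\kn\theta\widecheck{a}^-(s)$, the reparametrisation being measure preserving on each slice $\{x\cdot\xi=s\}$. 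Therefore Theorem~\ref{thm:Qholds}, read verbatim for the equation \eqref{eq:basic_one_dim} with $d=1$, yields all of \ref{eq:QBtheta_subset_Btheta}--\ref{prop:Q_Mtheta} for $\widetilde{Q}_t$; in particular \ref{prop:Q_Mtheta} for $d=1$, i.e.\ that $\widetilde{Q}_t$ maps $\M$ into $\M$.

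I do not expect a real obstacle: the statement reduces, by bookkeeping, to the already-established Theorems~\ref{thm:existuniq} and~\ref{thm:Qholds}. The only place calling for a line of care is the identification of the slice-integrated assumption \eqref{as:aplus_gr_aminus-intro} with the pointwise one-dimensional inequality \eqref{acheckpos}; here one uses the explicit form \eqref{apm1dim} of $\widecheck{a}^\pm$ and the fact that an orthonormal change of coordinates preserves $(d-1)$-dimensional volume on every hyperplane $\{x\cdot\xi=s\}$.
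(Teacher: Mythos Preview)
Your reduction works cleanly for the well-definedness of $\widetilde{Q}_t$ and for properties \ref{eq:QBtheta_subset_Btheta}--\ref{prop:Q_cont}: the equation \eqref{eq:basic_one_dim} is indeed an instance of \eqref{eq:basic} in dimension one, \eqref{as:aplus_gr_aminus-intro} is exactly the one-dimensional version of \eqref{as:aplus_gr_aminus} for the kernels $\widecheck{a}^\pm$, and so Theorem~\ref{thm:existuniq} and the already-proved part of Theorem~\ref{thm:Qholds} transfer directly. This is precisely what the paper does for those five properties.

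There is, however, a genuine circularity in your treatment of \ref{prop:Q_Mtheta}. The paper explicitly says (end of the Introduction) that \ref{prop:Q_Mtheta} of Theorem~\ref{thm:Qholds} is \emph{proved} by Proposition~\ref{prop:Qtilde}; Subsection~\ref{subsec:checkQ1-Q5} establishes only \ref{eq:QBtheta_subset_Btheta}--\ref{prop:Q_cont}. So when you write ``Theorem~\ref{thm:Qholds}, read verbatim for the equation \eqref{eq:basic_one_dim} with $d=1$, yields all of \ref{eq:QBtheta_subset_Btheta}--\ref{prop:Q_Mtheta}'', you are invoking \ref{prop:Q_Mtheta} to prove the very statement that is supposed to furnish its proof. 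You need an independent argument that $\widetilde{Q}_t$ preserves $\M$. The paper supplies one: it returns to the Picard-type iteration \eqref{eq:exist_uniq_BUC:Phi_v}--\eqref{eq:exist_uniq_BUC:B} from which the solution is constructed and observes that both $B$ and $\Phi_\tau$ map $\M$ into itself (convolution with a nonnegative kernel preserves monotonicity, products and exponentials of monotone functions are monotone, etc.), so the limit $\phi(\cdot,t)$ of $\Phi_\tau^n v$ lies in $\M$ whenever the initial datum does. That step is short, but it is not a consequence of anything you have quoted, and without it \ref{prop:Q_Mtheta} is unproved.
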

\begin{proof}
Note that all previous results (e.g. Theorem~\ref{thm:existuniq}) hold true for the solution to \eqref{eq:basic_one_dim} as well. In particular, properties \ref{eq:QBtheta_subset_Btheta}--\ref{prop:Q_cont} of Theorem~\ref{thm:Qholds} hold true, for $Q=\widetilde{Q}_t$, $d=1$. Moreover (see the proof of \cite[Theorems 2.2, 3.4]{FT2017a} for $E=L^\infty(\X)$, which implies Theorem~\ref{thm:existuniq}), the mappings $B$ and $\Phi_\tau$, cf. \eqref{eq:exist_uniq_BUC:B}, \eqref{eq:exist_uniq_BUC:Phi_v}, map the set $\M$ into itself; as a result, we have that $\widetilde{Q}_t$ has this property as well, cf.~Remark~\ref{rem:inclus}.
\end{proof}

Now we are going to prove the existence of the traveling wave solution to \eqref{eq:basic}. Denote, for any $\la>0$, $\xi\in S^{d-1}  $,
\begin{equation}\label{aplusexpla}
  \A_\xi(\la):=\int_\X a^+(x) e^{\la x\cdot \xi}\,dx\in[0,\infty].
\end{equation}
Therefore, for a $\xi\in S^{d-1} $, the assumption \eqref{aplusexpint1} means that $\A_{\xi}(\mu)<\infty$ for some $\mu=\mu(\xi)>0$.

We will prove now the first statement of Theorem~\ref{thm:trwexist}. 

\begin{proposition}\label{prop:trwexists}
Let $\xi\in S^{d-1}  $ and the assumptions \eqref{as:chiplus_gr_m},  \eqref{as:aplus_gr_aminus-intro}, \eqref{aplusexpint1} hold.
Then there exists $c_*(\xi)\in\R$ such that
\begin{enumerate}[label={\arabic*})]
    \item for any $c\geq c_*(\xi)$, there exists a traveling wave solution, in the sense of Definition~\ref{def:trw}, with a profile $\psi\in\M$ and the speed $c$,
    \item for any $c<c_*(\xi)$, such a traveling wave does not exist.
\end{enumerate}
\end{proposition}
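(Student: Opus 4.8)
The plan is to reduce everything to the one-dimensional equation \eqref{eq:basic_one_dim} via Remark~\ref{rem:multi-one}: a traveling wave for \eqref{eq:basic} in direction $\xi$ exists with speed $c$ if and only if one exists for the one-dimensional equation with kernels $\widecheck a^\pm$ from \eqref{apm1dim}. By Proposition~\ref{prop:Qtilde}, the semi-flow $\widetilde Q_t$ associated to \eqref{eq:basic_one_dim} satisfies all properties \ref{eq:QBtheta_subset_Btheta}--\ref{prop:Q_Mtheta} of Theorem~\ref{thm:Qholds}, in particular it maps $\M$ into itself, it is monotone \ref{prop:Q_preserves_order}, translation-equivariant \ref{prop:QTy=TyQ}, continuous under locally uniform convergence \ref{prop:Q_cont}, and fixes $0$ and $\theta$ with $\widetilde Q_tr>r$ for $r\in(0,\theta)$ \ref{prop:Ql_gr_l}. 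This is exactly the setting of Yagisita's abstract theorem (\cite{Yag2009}), which produces, for a discrete-time semi-flow $Q=\widetilde Q_{t_0}$ (fix any $t_0>0$) on $\M$ possessing these properties \emph{together with} the existence of a suitable super-solution, a critical speed $c_*$ such that monotone traveling waves exist for all $c\ge c_*$ and fail to exist for $c<c_*$. So the two things I must supply are: (i) a clean statement of the abstract existence mechanism in the form we need, and (ii) construction of the required super-solution, which is where the hypothesis \eqref{aplusexpint1} enters.

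First I would record the abstract input precisely. One runs Yagisita's construction for the time-$t_0$ map and then checks that the resulting profile is in fact a traveling wave for the continuous-time semi-flow: because $\widetilde Q_{t}\widetilde Q_{s}=\widetilde Q_{s}\widetilde Q_{t}$ and the profile is obtained as a locally uniform (Helly) limit of translates, standard semigroup arguments upgrade the discrete-time wave to a genuine solution $\phi(s,t)=\psi(s-ct)$ of \eqref{eq:basic_one_dim}; the monotonicity of $\psi$ and the boundary values $\psi(-\infty)=\theta$, $\psi(+\infty)=0$ come with the construction since $\widetilde Q_t\theta=\theta$, $\widetilde Q_t0=0$, and $\widetilde Q_t r>r$ for $r\in(0,\theta)$ forces the limit profile to be nontrivial. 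The nonexistence half for $c<c_*$ is also part of the abstract scheme: if a monotone wave existed with too small a speed, iterating $\widetilde Q_{t_0}$ on it and comparing with the spreading of compactly supported data (using \ref{prop:Ql_gr_l} and monotonicity) would contradict a lower bound on the spreading speed.

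The genuinely substantive step, and the one I expect to be the main obstacle, is the construction of the super-solution needed to get a \emph{finite} $c_*$ — i.e. to show $c_*<+\infty$. The natural candidate is an exponential profile: for a decay rate $\la>0$ in the range where $\A_\xi(\la)=\int_\X a^+(x)e^{\la x\cdot\xi}\,dx<\infty$ (nonempty by \eqref{aplusexpint1}; one also checks $\widecheck a^+$ inherits exponential integrability from $a^+$), take $\overline\phi(s)=\min\{\theta,\,e^{-\la(s-ct)}\}$ and verify it is a super-solution of \eqref{eq:basic_one_dim} provided $c\ge \frac1\la(\kap\A_\xi(\la)-m)$; dropping the nonlinear competition term and the $\kn\widecheck a^-$ contribution (both nonnegative, so they only help the super-solution inequality) reduces this to the scalar linear dispersion relation $c\la\ge \kap\A_\xi(\la)-m$, and the truncation at $\theta$ is handled as usual by noting that the inequality is preserved under the minimum. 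Optimizing over admissible $\la$ yields the estimate $c_*(\xi)\le \min_{\la>0}\frac1\la(\kap\A_\xi(\la)-m)$, which also gives the bound \eqref{cstarestimate} referenced later. One must be slightly careful that assumption \eqref{as:aplus_gr_aminus-intro} (equivalently \eqref{acheckpos}) is what guarantees the one-dimensional problem still has $\theta$ as a stable state and that the comparison/monotonicity machinery of Theorem~\ref{thm:Qholds} applies to $\widetilde Q_t$, so I would invoke Proposition~\ref{prop:Qtilde} explicitly at that point. Once the super-solution is in hand, feeding it into the abstract mechanism closes the proof; the remaining items 2)--5) of Theorem~\ref{thm:trwexist} are treated separately in the later propositions.
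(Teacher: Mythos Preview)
Your approach is essentially the paper's: reduce to $d=1$, invoke Proposition~\ref{prop:Qtilde} to get \ref{eq:QBtheta_subset_Btheta}--\ref{prop:Q_Mtheta} for $\widetilde Q_t$, build the exponential super-solution $\varphi(s)=\theta\min\{e^{-\mu s},1\}$, and feed it into Yagisita's abstract result. Two points, however, need correction or sharpening.

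First, your handling of the flat region is where the argument is actually delicate, and the phrase ``the inequality is preserved under the minimum'' papers over the real issue. For nonlocal equations the minimum of two super-solutions is \emph{not} automatically a super-solution in the differential sense, and the candidate $\theta e^{-\la(s-ct)}$ leaves the tube $[0,\theta]$ for $s<ct$, so you cannot appeal to semi-flow monotonicity either. Concretely, on $\{s<0\}$ one has $\varphi\equiv\theta$, $\varphi'=0$, and the inequality $\mathcal J_c(s)\le 0$ reads
\[
\kap(\widecheck a^+*\varphi)(s)-\kn\theta(\widecheck a^-*\varphi)(s)\le m\theta+\kl\theta^2.
\]
Dropping the $\kn$-term here goes the wrong way (it would only give $\kap\theta-m\theta>0$). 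What closes this is precisely \eqref{as:aplus_gr_aminus-intro} in the form \eqref{acheckpos}: since $\kap\widecheck a^+-\kn\theta\widecheck a^-\ge 0$ and $\varphi\le\theta$, one has $(\kap\widecheck a^+-\kn\theta\widecheck a^-)*\varphi\le(\kap-\kn\theta)\theta=m\theta+\kl\theta^2$. So \eqref{as:aplus_gr_aminus-intro} is used \emph{inside} the super-solution check, not merely to enable comparison for $\widetilde Q_t$.

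Second, two smaller items. Because $\varphi$ has a kink at $0$, the function $\bar\phi(s,t)=\varphi(s-ct)$ is only absolutely continuous in $t$, not $C^1$; the paper invokes Proposition~\ref{compprabscont} (the comparison principle for absolutely continuous sub/super-solutions) rather than Proposition~\ref{prop:fullcomp} to pass from the differential inequality to $\widetilde Q_1\varphi(\cdot+c)\le\varphi$. And your discrete-to-continuous upgrade is unnecessary: Yagisita's Theorem~5 in \cite{Yag2009} is stated for a continuous semi-flow and already returns $(\widetilde Q_t\psi)(s+ct)=\psi(s)$ for all $t\ge 0$; Theorem~6 then supplies the threshold $c_*(\xi)$ and the nonexistence for $c<c_*(\xi)$, together with the bound \eqref{cstarestimate}.
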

\begin{proof}
Let $\mu>0$ be such that \eqref{aplusexpint1} holds. Then, by \eqref{apm1dim},
\begin{align}\notag
\int_\R \widecheck{a}^+(s) e^{\mu s}ds&=\int_\R \int_{\R^{d-1}} a^\pm(\tau_1\eta_1+\ldots+\tau_{d-1}\eta_{d-1}+s\xi)e^{\mu s}\,d\tau_1\ldots d\tau_{d-1} ds\\&=\A_\xi(\mu)<\infty.\label{expintla1}
\end{align}
Clearly, the integral equality in \eqref{expintla1} holds true for any $\la\in\R$ as well, with $\A_\xi(\la)\in[0,\infty]$.

Let $\mu>0$ be such that \eqref{aplusexpint1} holds. Define a function from $\M$ by
\begin{equation}\label{defvarphi}
\varphi(s):=\theta\min\{e^{-\mu s},1\}.
\end{equation}
Let us prove that there exists $c\in\R$ such that $\bar{\phi}(s,t):=\varphi(s-ct)$ is a super-solution to \eqref{eq:basic_one_dim}, i.e.
\begin{equation}\label{supersol}
\mathcal{F}\bar{\phi}(s,t)\geq0,\quad s\in\R, t\geq0,
\end{equation}
where $\mathcal{F}$ is given by \eqref{Foper} (in the case $d=1$).
We have
\begin{align*}
  (\mathcal{F}\bar{\phi})(s,t) & =-c\varphi'(s-ct)- \kap (\widecheck{a}^+*\varphi)(s-ct)+m\varphi(s-ct)\\&\quad +\kn\varphi(s-ct) (\widecheck{a}^-*\varphi)(s-ct) +\kl\varphi^2(s-ct),
\end{align*}
hence, to prove \eqref{supersol}, it is enough to show that, for all $s\in\R$,
\begin{align}\notag
  \mathcal{J}_c(s):&=c\varphi'(s)+\kap (\widecheck{a}^+*\varphi)(s)-m\varphi(s) \\&\quad- \kn\varphi(s)(\widecheck{a}^-*\varphi)(s) - \kl \varphi^2(s) \leq 0.\label{suffcond}
\end{align}

By \eqref{defvarphi}, \eqref{acheckpos}, for $s<0$, we have
\begin{align*}
  \mathcal{J}_c(s)&=\kap (\widecheck{a}^+*\varphi)(s)-m\theta-\kn\theta(\widecheck{a}^-*\varphi)(s) - \kl \theta^2 \\
  &\leq \bigl((\kap \widecheck{a}-\kn\theta\widecheck{a}^-)*\theta\bigr)(s) -m\theta -\kl\theta^2=0.
\end{align*}
Next, by \eqref{defvarphi},
\[
(\widecheck{a}^+*\varphi)(s)\leq\theta \int_\R \widecheck{a}^+(\tau)e^{-\mu(s-\tau)}\,d\tau=\theta e^{-\mu s} \A_\xi(\mu),
\]
therefore, for $s\geq0$, we have
\[
    \mathcal{J}_c(s)\leq -\mu c\theta e^{-\mu s}
    +\kap \theta  e^{-\mu s} \A_\xi(\mu) -m\theta e^{-\mu s};
\]
and to get \eqref{suffcond} it is enough to demand that $\kap  \A_\xi(\mu)-m- \mu c\leq0$, in particular,
\begin{equation}\label{demand}
c=\frac{\kap  \A_\xi(\mu)-m}{\mu}.
\end{equation}
As a result, for $\bar\phi(s,t)=\varphi(s-ct)$ with $c$ given by \eqref{demand}, we have
\begin{equation}\label{supersol2}
\mathcal{F}\bar\phi\geq0=\mathcal{F}(\widetilde{Q}_t\varphi),
\end{equation}
as $\widetilde{Q}_t\varphi$ is a solution to \eqref{eq:basic_one_dim}. Then, by \eqref{as:aplus_gr_aminus} and the inequality $\bar\phi\leq\theta$, one can apply Proposition~\ref{compprabscont} and get that
\[
\widetilde{Q}_t\varphi(s')\leq \bar\phi(t,s')=\varphi(s'-ct), \quad \text{a.a.}\ s'\in\R,
\]
where $c$ is given by \eqref{demand}; note that, by \eqref{defvarphi}, for any $s\in\R$, the function $\bar{\phi}(s,t)$ is absolutely continuous in $t$. In particular, for $t=1$, $s'=s+c$, we get
\begin{equation}\label{mayYag}
\widetilde{Q}_1\varphi(s+c)\leq \varphi(s), \quad \text{a.a.}\ s\in\R.
\end{equation}
And now one can apply \cite[Theorem 5]{Yag2009} which states that, if there exists a flow of abstract mappings $\widetilde{Q}_t$, each of them maps $\M$ into itself and has properties \ref{eq:QBtheta_subset_Btheta}--\ref{prop:Q_cont} of Theorem~\ref{thm:Qholds}, and if, for some $t$ (e.g. $t=1$), for some $c\in\R$, and for some $\varphi\in\M$, the inequality \eqref{mayYag} holds, then there exists $\psi\in\M$ such that, for any $t\geq0$,
\begin{equation}\label{getbyYag}
(\widetilde{Q}_t \psi)(s+ct)=\psi(s), \quad \text{a.a.}\ s\in\R,
\end{equation}
that yields the solution to \eqref{eq:basic_one_dim} in the form \eqref{tw1d}, and hence, by Remark~\ref{rem:multi-one}, we will get the existence of a solution to \eqref{eq:basic} in the form \eqref{eq:deftrw}. It is worth noting that, in \cite{Yag2009}, the results were obtained for increasing functions. By~Remark~\ref{incrinsteadofdecr}, the same results do hold for decreasing functions needed for our settings.

Next, by \cite[Theorem 6]{Yag2009}, there exists $c_*=c_*(\xi)\in(-\infty,\infty]$ such that, for any $c\geq c_*$, there exists $\psi=\psi_c\in\M$ such that \eqref{getbyYag} holds, and for any $c<c_*$ such a $\psi$ does not exist. Since for $c$ given by \eqref{demand} such a $\psi$ exists, we have that $c_*\leq c<\infty$, moreover, one can take any $\mu$ in \eqref{demand} for that \eqref{aplusexpint1} holds. Therefore,
\begin{equation}\label{cstarestimate}
c_*\leq \inf_{\la>0}\frac{\kap  \A_\xi(\la)-m}{\la}.
\end{equation}
The statement is proved.
\end{proof}

\begin{remark}
  It can be seen from the proof above that we didn't use the special form \eqref{defvarphi} of the function $\varphi$ after the inequality \eqref{supersol2}. Therefore, if a function $\varphi_1\in\M$ is such that the function $\bar\phi(s,t):=\varphi_1(s-ct)$, $s\in\R$, $t\geq0$, is a super-solution to \eqref{eq:basic_one_dim}, for some $c\in\R$, i.e. if \eqref{supersol} holds, then there exists a traveling wave solution to \eqref{eq:basic_one_dim}, and hence to \eqref{eq:basic}, with some profile $\psi\in\M$ and the same speed $c$.
\end{remark}

We are going to prove now the second item of Theorem~\ref{thm:trwexist}. We start with the following
\begin{proposition}\label{prop:reg_trw}
Let $\psi\in\M$ and $c\in\R$ be such that there exists a solution $u\in\Y_\infty$ to the equation \eqref{eq:basic} such that \eqref{eq:deftrw} holds, for some $\xi\in S^{d-1}  $. Then $\psi\in C^{1}(\R\to[0,\theta])$, for $c\neq0$, and $\psi\in C(\R\to[0,\theta])$, otherwise.
\end{proposition}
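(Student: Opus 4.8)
The plan is to exploit the fact that, for a traveling wave of the form \eqref{eq:deftrw}, the PDE \eqref{eq:basic} collapses (via Proposition~\ref{prop:monot_sol}, applied to the initial datum $u_0(x)=\psi(x\cdot\xi)$) to the one-dimensional equation \eqref{eq:basic_one_dim} for $\phi(s,t)=\psi(s-ct)$. Plugging this ansatz into \eqref{eq:basic_one_dim} and using $\partial_t\phi(s,t)=-c\psi'(s-ct)$ in the distributional sense, one obtains that $\psi$ satisfies the integro-differential identity
\[
  -c\,\psi'(z)=\kap(\widecheck a^+*\psi)(z)-m\psi(z)-\kl\psi(z)^2-\kn\psi(z)(\widecheck a^-*\psi)(z),\quad z\in\R,
\]
where the right-hand side, call it $R(z)$, is a \emph{bounded continuous} function of $z$: indeed $\psi\in\M\subset L^\infty(\R)$, and by Lemma~\ref{le:simple} both convolutions $\widecheck a^\pm*\psi$ lie in $C_b(\R)$, while $\psi$ itself, being monotone, has at most countably many discontinuities — but the products $\psi\cdot(\widecheck a^-*\psi)$ and $\psi^2$ need a moment's care, which is exactly the crux (see below).

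The argument then splits on the sign of $c$. If $c\neq0$: from $-c\psi'=R$ with $R$ continuous we get that $\psi$ is $C^1$ with $\psi'=-R/c$ continuous; this is a bootstrap — once $\psi\in C(\R)$ the right-hand side $R$ improves to $C^1$ (the convolution terms are automatically smooth in the direction of integration, and $\psi^2,\psi\cdot(\widecheck a^-*\psi)$ inherit $C^1$ regularity from $\psi\in C^1$), so $\psi\in C^2$, and iterating gives $\psi\in C^\infty(\R)$ with all derivatives bounded, which upgrades item 2) of Theorem~\ref{thm:trwexist} beyond what this proposition claims. If $c=0$: the identity degenerates to $0=R(z)$, i.e. $\kap(\widecheck a^+*\psi)(z)=m\psi(z)+\kl\psi(z)^2+\kn\psi(z)(\widecheck a^-*\psi)(z)$ for a.a. $z$; here the left-hand side is continuous, so after modifying $\psi$ on a null set (using the embedding of Remark~\ref{rem:inclus}, replacing $\psi$ by its right-continuous representative) one solves the resulting pointwise quadratic in $\psi(z)$ with continuous coefficients and concludes $\psi\in C(\R)$ by continuity of the roots of a quadratic with continuous coefficients whose discriminant stays positive (the relevant root being the one in $[0,\theta]$, singled out because $\psi$ takes values there).

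The main obstacle is the first step: justifying rigorously that the distributional derivative $\partial_t u$ of the traveling-wave solution really equals $-c\psi'$ understood via the one-dimensional reduction, and that the a.e. identity can be upgraded to hold for \emph{every} $z$ after choosing the right-continuous representative — this is where $u\in\Y_\infty$ (continuous differentiability in $t$ into $L^\infty(\X)$) and Proposition~\ref{prop:monot_sol} together with Remark~\ref{rem:multi-one} are used. A secondary subtlety is that a priori $\psi$ is only right-continuous, so $R(z)$ is only \emph{right}-continuous a priori; but the equation $-c\psi'=R$ (for $c\neq 0$) forces $\psi$ to be locally Lipschitz hence continuous, which then retroactively makes $R$ genuinely continuous — so one should first establish continuity of $\psi$ by a Lipschitz estimate ($|\psi'|\le \|R\|_\infty/|c|$ in the distributional sense implies $\psi$ Lipschitz), and only afterwards run the bootstrap. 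For $c=0$ one instead reads continuity of $\psi$ directly off continuity of $\widecheck a^+*\psi$ via the quadratic, so no Lipschitz estimate is needed there.
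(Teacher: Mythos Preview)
Your approach is correct and, for the case $c\neq0$, takes a genuinely different route from the paper's. The paper proves pointwise differentiability of $\psi$ directly by a squeeze argument: fixing $s_0\in\R$ and $t_0>0$, it uses the monotonicity of $\psi$ to trap the difference quotient $\frac{\psi(s_0+h)-\psi(s_0)}{h}$ between quantities of the form $\frac{h\mp 1}{c}\,\frac{\partial\phi}{\partial t}(s,t_0)$ for $s$ in a shrinking interval near $s_0+ct_0$ (with an $h^2$-shift correction), and then lets $h\to0$ using continuity of $t\mapsto\phi_t(\cdot,t)$ in $L^\infty$; uniform continuity of $\psi'$ is then read off the same $L^\infty$-continuity of $\phi_t$. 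Your route is more functional-analytic: from $\phi(\cdot,t)=\psi(\cdot-ct)\in C^1((0,\infty)\to L^\infty(\R))$ one sees (pairing against a test function, or using the Bochner-integral form of $C^1$) that the distributional derivative $\psi'$ is represented by an $L^\infty$ function, hence $\psi$ is Lipschitz; then $R$ is honestly continuous, $\psi'=-R/c$ everywhere, and your bootstrap to $C_b^\infty$ runs exactly as you say, absorbing what the paper proves separately as Corollary~\ref{cor:infsmoothprofile}. Your argument is shorter and avoids the pointwise squeeze, at the mild cost of making explicit the identification of the $L^\infty$-valued classical $t$-derivative with the distributional one (the ``main obstacle'' you flag; it is indeed handled by the tools you name). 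The paper's argument is entirely elementary and makes visible how monotonicity of $\psi$ is doing the work. For $c=0$ the two proofs coincide: solve the pointwise quadratic (or linear, if $\kl=0$) in $\psi(s)$ with continuous coefficients supplied by Lemma~\ref{le:simple}.
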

\begin{proof}
The condition \eqref{eq:deftrw} implies \eqref{trwvincond} for the $\xi\in S^{d-1}  $. Then, by Proposition~\ref{prop:monot_sol}, there exists $\phi$ given by \eqref{repres} which solves \eqref{eq:basic_one_dim}; moreover, by Remark~\ref{rem:multi-one}, \eqref{tw1d} holds.

Let $c\neq0$. It is well-known that any monotone function is differentiable almost everywhere. Prove first that $\psi$ is differentiable everywhere on $\R$. Fix any $s_{0}\in\R$.
It follows directly from Proposition~\ref{prop:monot_sol}, that $\phi\in C^1((0,\infty)\to L^\infty(\R))$. Therefore, for any $t_0>0$ and for any $\varepsilon>0$, there exists $\delta=\delta(t_0,\varepsilon)>0$
such that, for all $t\in\R$ with $|ct|<\delta$ and $t_0+t>0$, the following inequalities hold, for a.a.~$s\in\R$,
\begin{gather}
\dfrac{\partial \phi}{\partial t}(s,t_{0}) -\eps< \dfrac{\phi(s,t_{0}+t)-\phi(s,t_{0})}{t}<\dfrac{\partial \phi}{\partial t}(s,t_{0})+\varepsilon,\label{firstfromeq}\\
\dfrac{\partial \phi}{\partial t}(s,t_{0})-\eps<\dfrac{\partial \phi}{\partial t}(s,t_{0}+t)<\dfrac{\partial \phi}{\partial t}(s,t_{0})+\varepsilon.\label{secondfromeq}
\end{gather}

Set, for the simplicity of notations, $x_0=s_0+ct_0$. Take any $0<h<1$ with $2h<\min\bigl\{\delta,|c|t_0, |c|\delta \bigr\}$.
Since $\psi$ is a decreasing function, one has, for almost all $s\in(x_0,x_0+h^{2})$,
\begin{align}
&\quad \dfrac{\psi(s_0+h)-\psi(s_0)}{h}\leq\dfrac{\psi(s-ct_0+h-h^{2})-\psi(s-ct_0)}{h} \nonumber \\
&=\dfrac{\phi(s,t_0+\frac{h^{2}-h}{c})-\phi(s,t_0)}{\frac{h^{2}-h}{c}}\dfrac{h^{2}-h}{ch}\leq\left(\dfrac{\partial \phi}{\partial t}(s,t_0)\mp\varepsilon\right)\dfrac{h-1}{c},\label{ff1}
\end{align}
by \eqref{firstfromeq} with $t=\frac{h^{2}-h}{c}$; note that then $|ct|=h-h^2<h<\delta$, and $t_0+t>0$ (the latter holds, for $c<0$, because of $t_0+t>t_0$ then; and, for $c>0$, it is equivalent to $ct_0>-ct=h-h^2$, that follows from $h<ct_0$).
Stress, that, in \eqref{ff1}, one needs to choose $-\eps$, for $c>0$, and $+\eps$, for $c<0$, according to the left and right inequalities in \eqref{firstfromeq}, correspondingly.

Similarly, for almost all $s\in(x_0-h^{2},x_0)$, one has
\begin{align}
&\quad \dfrac{\psi(s_0+h)-\psi(s_0)}{h}\geq\dfrac{\psi(s-ct_0+h+h^{2})-\psi(s-ct_0)}{h} \nonumber\\
&=\dfrac{\phi(s,t_0-\frac{h^{2}+h}{c})-\phi(s,t_0)}{-\frac{h^{2}+h}{c}}\dfrac{h^{2}+h}{-ch}\geq\left(\dfrac{\partial \phi}{\partial t}(s,t_0)\pm\varepsilon\right)\dfrac{h+1}{-c},\label{ff2}
\end{align}
where we take again the upper sign, for $c>0$, and the lower sign, for $c<0$; note also that $h+h^2<2h<\delta$.
Next, one needs to `shift' values of $s$ in \eqref{ff2} to get them the same as in \eqref{ff1}. To do this note that, by \eqref{tw1d},
\begin{equation}\label{ff3}
\phi\Bigl(s+h^2,t_0+\frac{h^2}{c}\Bigr)=\phi(s,t_0), \quad \text{a.a.}\ s\in\X.
\end{equation}
As a result,
\begin{equation}\label{ff4}
\begin{split}(\widecheck{a}^\pm *\phi)\Bigl(s+h^2,t_0+\frac{h^2}{c}\Bigr)&=\int_{\R} \widecheck{a}^\pm (s') \phi\Bigl(s-s'+h^2,t_0+\frac{h^2}{c}\Bigr)\,ds\\
&=(\widecheck{a}^\pm *\phi)(s,t_0), \quad \text{a.a.}\ s\in\X.
\end{split}
\end{equation}
Then, by \eqref{eq:basic_one_dim}, \eqref{ff3}, \eqref{ff4}, one gets
\begin{equation}\label{ff5}
\frac{\partial}{\partial t}\phi\Bigl(s+h^2,t_0+\frac{h^2}{c}\Bigr)=\frac{\partial}{\partial t}\phi(s,t_0), \quad \text{a.a.}\ s\in\X.
\end{equation}
Therefore, by \eqref{ff5}, one gets from \eqref{ff2} that, for almost all $s\in(x_0,x_0+h^{2})$, cf. \eqref{ff1},
\begin{align}
 \dfrac{\psi(s_0+h)-\psi(s_0)}{h}&\geq\left(\dfrac{\partial \phi}{\partial t}\Bigl(s,t_0+\frac{h^2}{c}\Bigr)\pm\varepsilon\right)\dfrac{h+1}{-c},\notag\\
 \intertext{and, since $\bigl\lvert \frac{h^2}{c}\bigr\rvert<\delta$, one can apply the right and left inequalities in \eqref{secondfromeq}, for $c>0$ and $c<0$, correspondingly, to continue the estimate}
& \geq\biggl(\dfrac{\partial \phi}{\partial t}(s,t_0)\pm 2\varepsilon\biggr)\dfrac{h+1}{-c}.\label{ff6}
\end{align}
Combining \eqref{ff1} and \eqref{ff6}, we obtain
\begin{multline}
\biggl(\esssup_{s\in(x_0,x_0+h^2)}\dfrac{\partial \phi}{\partial t}(s,t_0)\pm 2\varepsilon\biggr)\dfrac{h+1}{-c} \leq \dfrac{\psi(s_0+h)-\psi(s_0)}{h}\\ \leq\biggl(\esssup_{s\in(x_0,x_0+h^2)}\dfrac{\partial \phi}{\partial t}(s,t_0)\mp\varepsilon\biggr)\dfrac{h-1}{c}. \label{ff7}
\end{multline}
For fixed $s_0\in\R$, $t_0>0$ and for $x_0=s_0+ct_0$, the function
\[
	f(h):=\esssup\limits_{s\in(x_0,x_0+h^2)}\frac{\partial \phi}{\partial t}(s,t_0), \quad h\in(0,1),
\]
is bounded, as $|f(h)|\leq \bigl\lVert \frac{\partial \phi}{\partial t}(\cdot,t_0)\bigr\rVert_\infty<\infty$, and monotone; hence there exists $\bar f=\lim\limits_{h\to0+}f(h)$. As a result, for small enough $h$, \eqref{ff7} yields
\begin{equation*}
(\bar f\pm 2\varepsilon)\dfrac{1}{-c} -\eps \leq \dfrac{\psi(s_0+h)-\psi(s_0)}{h} \leq(\bar f\mp\varepsilon)\dfrac{-1}{c}+\eps,
\end{equation*}
and, therefore, there exists
$\dfrac{\partial\psi}{\partial s}(s_0+)=\dfrac{-\bar f}{c}$. In the same way, one can prove that there exists $\dfrac{\partial\psi}{\partial s}(s_0-)=\dfrac{-\bar f}{c}$, and, therefore, $\psi$ is differentiable at $s_0$.
As a result, $\psi$ is differentiable (and hence continuous) on the whole $\R$.

Next, for any $s_1,s_2,h\in\R$, we have
\begin{multline*}
  \biggl\lvert \frac{\psi(s_1+h)-  \psi(s_1)}{h}-\frac{\psi(s_2+h)-  \psi(s_2)}{h}\biggr\rvert\\
  =\frac{1}{|c|}\biggl\lvert \frac{\phi\bigl(s_1+ct_0,t_0-\frac{h}{c}\bigr)-  \phi(s_1+ct_0,t_0)}{-\frac{h}{c}}\qquad\qquad\qquad\\
  -\frac{\phi\bigl(s_1+ct_0,t_0+\frac{s_1-s_2}{c}-\frac{h}{c}\bigr)-  \phi\bigl(s_1+ct_0,t_0+\frac{s_1-s_2}{c}\bigr)}{-\frac{h}{c}}\biggr\rvert;
\end{multline*}
and if we pass $h$ to $0$, we get
\begin{align}\notag
  \lvert \psi'(s_1)-\psi'(s_2)\rvert&=\frac{1}{|c|}\biggl\lvert \frac{\partial}{\partial t}\phi(s_1+ct_0,t_0)
  -\frac{\partial}{\partial t}\phi\Bigl(s_1+ct_0,t_0+\frac{s_1-s_2}{c}\Bigr)\biggr\rvert
  \\& \leq \frac{1}{|c|}\biggl\lVert \frac{\partial}{\partial t}\phi(\cdot,t_0)
  -\frac{\partial}{\partial t}\phi\Bigl(\cdot,t_0+\frac{s_1-s_2}{c}\Bigr)\biggr\rVert.\label{eq333}
\end{align}
And now, by the continuity of $\frac{\partial}{\partial t}\phi(\cdot,t)$ in $t$ in the sense of the norm in $L^\infty(\R)$, we have that, by \eqref{secondfromeq}, the inequality $|s_1-s_2|\leq |c|\delta$ implies that, by \eqref{eq333},
$\lvert \psi'(s_1)-\psi'(s_2)\rvert\leq \frac{1}{|c|} \eps$.
As a result, $\psi'(s)$ is uniformly continuous on $\R$ and hence continuous.

Finally, consider the case $c=0$. Then \eqref{tw1d} implies that $\phi(s,t)$ must be constant in time, i.e. $\phi(s,t)=\psi(s)$, for a.a. $s\in\R$. Thus one can rewrite \eqref{eq:basic_one_dim} as follows
\begin{align}
  0 &= -\kap (\widecheck{a}^{+}*\psi)(s) +m\psi(s)+\kn\psi(s) (\widecheck{a}^{-}*\psi)(s) +\kl \psi^2(s) \nonumber \\ 
&= \kl\psi^2(s) + A(s) \psi(s) - B(s), \label{statwave}
\end{align}
where $A(s) = m+\kn (\widecheck{a}^{-}*\psi)(s)$ and $B(s) = \kap (\widecheck{a}^{+}*\psi)(s) $. Equivalently,

\begin{equation}\label{asquotient}
	\psi(s)= \frac{\sqrt{A^2(s)+4\kl B(s)}-A(s)}{4\kl}.
\end{equation}
Since $\psi\in L^\infty(\R)$, then, by Lemma~\ref{le:simple}, the r.h.s. of \eqref{asquotient} is a continuous in $s$ function, and hence $\psi\in C(\R)$.
\end{proof}

\begin{proposition}
Let $\psi\in\M$, $c\in\R$, $\xi\in S^{d-1} $ be such that there exists a solution $u\in\Y_\infty$ to the equation \eqref{eq:basic} such that \eqref{eq:deftrw} holds. Then, for each $s\in\R$,
\begin{multline}
  c\psi'(s) +\kap (\widecheck{a}^{+}*\psi)(s) -m\psi(s)
    -\kn\psi(s) (\widecheck{a}^{-}*\psi)(s) -\kl \psi^2(s)=0.\label{eq:trw}
\end{multline}
\end{proposition}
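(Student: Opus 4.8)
The plan is to substitute the traveling-wave form into the reduced one-dimensional equation \eqref{eq:basic_one_dim} and read off \eqref{eq:trw}. Since \eqref{eq:deftrw} forces the initial datum $u(\cdot,0)$ to be of the form \eqref{trwvincond} with the given $\xi$ and $\psi$, and since by Theorem~\ref{thm:existuniq} this solution is unique, Proposition~\ref{prop:monot_sol} applies: $u$ satisfies \eqref{repres} for some $\phi$ solving \eqref{eq:basic_one_dim}, and comparing \eqref{repres} with the hypothesis \eqref{eq:deftrw} gives $\phi(s,t)=\psi(s-ct)$ for a.a.\ $s\in\R$ and all $t\geq0$ (cf.\ \eqref{tw1d}); moreover $\phi$ is, by its construction, continuously differentiable in $t$ with values in $L^\infty(\R)$.

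First I would evaluate the nonlinear ingredients of \eqref{eq:basic_one_dim} on this ansatz: a change of variables in the convolution gives $(\widecheck{a}^\pm*\phi)(s,t)=(\widecheck{a}^\pm*\psi)(s-ct)$. For the time derivative, when $c\neq0$ Proposition~\ref{prop:reg_trw} provides $\psi\in C^1(\R)$ with $\psi'$ uniformly continuous on $\R$, so the difference quotients $h^{-1}\bigl(\psi(\cdot-c(t+h))-\psi(\cdot-ct)\bigr)$ converge to $-c\psi'(\cdot-ct)$ uniformly on $\R$, whence $\frac{\partial\phi}{\partial t}(s,t)=-c\psi'(s-ct)$ for every $s\in\R$; when $c=0$, \eqref{tw1d} shows $\phi$ is independent of $t$, so $\frac{\partial\phi}{\partial t}\equiv0$ and the term $c\psi'$ is read as $0$. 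Substituting into \eqref{eq:basic_one_dim} at time $t$ and putting $\sigma:=s-ct$, which runs over all of $\R$, then yields, for a.a.\ $\sigma\in\R$,
\[
c\psi'(\sigma)+\kap(\widecheck{a}^{+}*\psi)(\sigma)-m\psi(\sigma)-\kn\psi(\sigma)(\widecheck{a}^{-}*\psi)(\sigma)-\kl\psi^2(\sigma)=0 .
\]

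Next I would remove the ``almost everywhere''. By Lemma~\ref{le:simple} the functions $\widecheck{a}^\pm*\psi$ are continuous on $\R$; $\psi$ and $\psi^2$ are continuous (for $c=0$ by Proposition~\ref{prop:reg_trw}, for $c\neq0$ since $\psi\in C^1$); and $\psi'$ is continuous when $c\neq0$. Hence each term in the displayed identity is a continuous function of $\sigma$, so the identity, valid almost everywhere, holds for every $\sigma\in\R$, which is exactly \eqref{eq:trw}.

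The only delicate point is the pointwise-in-$s$ identification $\frac{\partial\phi}{\partial t}(s,t)=-c\psi'(s-ct)$: \emph{a priori} the regularity of $\phi$ yields the time derivative merely as a limit in $L^\infty(\R)$, and this must be matched with the a.e.\ pointwise derivative of the monotone profile $\psi$; this is precisely where the uniform continuity of $\psi'$ furnished by Proposition~\ref{prop:reg_trw} enters. Everything else is routine bookkeeping.
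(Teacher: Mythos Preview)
Your proof is correct and follows essentially the same route as the paper: reduce to the one-dimensional equation via Proposition~\ref{prop:monot_sol} (equivalently Remark~\ref{rem:multi-one}), invoke Proposition~\ref{prop:reg_trw} to justify differentiating $\psi(s-ct)$ in $t$ when $c\neq0$ (respectively, to reduce to \eqref{statwave} when $c=0$), and then use Lemma~\ref{le:simple} to pass from an a.e.\ identity to one valid for every $s\in\R$. The paper's own proof is a two-line sketch citing exactly these ingredients; you have simply unpacked the computation and made the pointwise-versus-$L^\infty$ identification of $\partial_t\phi$ explicit.
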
 
\begin{proof}
Let $c\neq0$. Then, by~Remark~\ref{rem:multi-one} and Proposition~\ref{prop:reg_trw},  one can differentiate $\psi(s-ct)$ in $t\geq0$. By this and Lemma~\ref{le:simple} we get \eqref{eq:trw} for all $s\in\R$. For $c=0$, one has \eqref{statwave}, i.e. \eqref{eq:trw} holds in this case as well.
\end{proof}

Let $k\in\N\cup\{\infty\}$ and $C_b^k(\R)$ denote the class of all functions on $\R$ which are $k$ times differentiable and whose derivatives (up to the order $k$) are continuous and bounded on $\R$. The following corollary finishes the proof of the second item of Theorem~\ref{thm:trwexist}.
\begin{corollary}\label{cor:infsmoothprofile}
Let $\psi\in\M$, $c\in\R$, $c\neq0$, $\xi\in S^{d-1} $ be such that there exists a solution $u\in\Y_\infty$ to the equation \eqref{eq:basic} such that \eqref{eq:deftrw} holds. Then $\psi\in C_b^\infty(\R)$.
\end{corollary}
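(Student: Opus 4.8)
The plan is to read the traveling-wave identity \eqref{eq:trw} as a recursion for the derivatives of $\psi$. Since $c\neq0$, that identity can be rewritten as
\begin{equation}\label{eq:bootstrap}
  \psi'(s)=\frac{1}{c}\Bigl(-\kap(\widecheck{a}^{+}*\psi)(s)+m\psi(s)+\kn\psi(s)(\widecheck{a}^{-}*\psi)(s)+\kl\psi^2(s)\Bigr),\qquad s\in\R .
\end{equation}
By Proposition~\ref{prop:reg_trw} we already know $\psi\in C^1(\R\to[0,\theta])$ (here $c\neq0$ is used), so in particular $\psi$ is bounded, and by the one-dimensional form of Lemma~\ref{le:simple} the convolutions $\widecheck{a}^{\pm}*\psi$ lie in $C_{ub}(\R)$ with $\|\widecheck{a}^{\pm}*\psi\|_\infty\le\|\widecheck{a}^{\pm}\|_{L^1(\R)}\|\psi\|_\infty=\|\psi\|_\infty$. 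Hence the right-hand side of \eqref{eq:bootstrap} is bounded and continuous, which gives the base case $\psi\in C_b^1(\R)$ (consistent with the uniform continuity of $\psi'$ proved in Proposition~\ref{prop:reg_trw}).

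Next I would argue by induction on $k\in\N$ that $\psi\in C_b^k(\R)$. Assume $\psi\in C_b^k(\R)$. Since $C_b^k(\R)$ is closed under products, $m\psi$ and $\kl\psi^2$ belong to $C_b^k(\R)$. For the convolution terms one differentiates under the integral sign: writing $(\widecheck{a}^{\pm}*\psi)(s)=\int_\R\widecheck{a}^{\pm}(t)\psi(s-t)\,dt$ and using that each $\psi^{(j)}$, $0\le j\le k$, is bounded (so dominated convergence applies), one gets $(\widecheck{a}^{\pm}*\psi)^{(j)}=\widecheck{a}^{\pm}*\psi^{(j)}$ for $0\le j\le k$; by Lemma~\ref{le:simple} each of these functions is in $C_{ub}(\R)$ and bounded by $\|\psi^{(j)}\|_\infty$, so $\widecheck{a}^{\pm}*\psi\in C_b^k(\R)$. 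Consequently the product $\psi\cdot(\widecheck{a}^{-}*\psi)$ lies in $C_b^k(\R)$, whence the whole right-hand side of \eqref{eq:bootstrap} is in $C_b^k(\R)$, i.e. $\psi'\in C_b^k(\R)$, that is $\psi\in C_b^{k+1}(\R)$. Combining with the base case yields $\psi\in\bigcap_{k\in\N}C_b^k(\R)=C_b^\infty(\R)$.

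The only delicate points — and hence the ``main obstacle,'' modest as it is — are the interchange of derivative and convolution integral (clean once $\psi^{(j)}$ is known to be bounded, which is exactly what the induction hypothesis supplies) and keeping track that all derivatives remain \emph{bounded}, not merely continuous; boundedness propagates through \eqref{eq:bootstrap} because convolution with an $L^1(\R)$ kernel does not increase the sup-norm and $C_b^k(\R)$ is a Banach algebra. No compactness or fixed-point argument is needed: the full regularity of the profile is driven entirely by the identity \eqref{eq:bootstrap}.
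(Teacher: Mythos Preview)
Your proposal is correct and follows essentially the same bootstrap as the paper: use \eqref{eq:trw} (rewritten as an expression for $\psi'$ since $c\neq0$), Lemma~\ref{le:simple} for $\widecheck{a}^{\pm}*\psi\in C_b(\R)$, and the fact that $(\widecheck{a}^{\pm}*\psi)^{(j)}=\widecheck{a}^{\pm}*\psi^{(j)}$ when $\psi^{(j)}$ is bounded, to climb from $C_b^k$ to $C_b^{k+1}$. The paper's version is slightly terser (it phrases the step as ``the equality \eqref{eq:trw} holds with $\psi'$ replaced by $\psi''$ and $\psi$ replaced by $\psi'$'' and cites \cite[Proposition~5.4.1]{Sta2005} for the differentiation of the convolution), but the argument is the same.
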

\begin{proof}
By Lemma~\ref{le:simple}, $\widecheck{a}^\pm*\psi\in C_b(\R)$. Then \eqref{eq:trw} yields $\psi'\in C_b(\R)$, i.e. $\psi\in C_b^1(\R)$. By e.g. \cite[Proposition~5.4.1]{Sta2005}, $\widecheck{a}^\pm*\psi\in C_b^1(\R)$ and $(\widecheck{a}^\pm*\psi)'=\widecheck{a}^\pm*\psi'$, therefore, the equality \eqref{eq:trw} holds with $\psi'$ replaced by $\psi''$ and $\psi$ replaced by $\psi'$. Then, by the same arguments $\psi\in C_b^2(\R)$, and so on. The statement is proved.
\end{proof}

We are going to prove now the third item of Theorem~\ref{thm:trwexist}. We will follow ideas of \cite{CD2007}.
\begin{proposition}\label{prop:trw_exp_est}
Let \eqref{as:chiplus_gr_m} and \eqref{as:aplus_gr_aminus-intro} hold. Let $\psi\in\M$, $c\in\R$, $\xi\in S^{d-1} $ be such that there exists a solution $u\in\Y_\infty$ to the equation \eqref{eq:basic} such that \eqref{eq:deftrw} holds. Then there exists $\mu=\mu( c, a^+, \kam, \theta)>0$ such that
  $\int_\R\psi(s)e^{\mu s}\,ds<\infty$.
\end{proposition}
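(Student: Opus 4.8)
The plan is to reduce to the one‑dimensional profile equation and then to extract an exponential bound for $\psi$ near $+\infty$ by an elementary iteration, comparing $\psi$ (or a suitable window‑average of it) at two points at a fixed distance apart. By Remark~\ref{rem:multi-one} and \eqref{eq:trw}, $\psi\in\M$ satisfies, for every $s\in\R$,
\[
  c\psi'(s)+\kap(\widecheck a^{+}*\psi)(s)-m\psi(s)=\psi(s)(G\psi)(s),
\]
with $\widecheck a^{\pm}\in L^{1}(\R)$ as in \eqref{apm1dim}, $\psi$ decreasing, $0\le\psi\le\theta$, $\psi(+\infty)=0$. Since $\widecheck a^{-}\in L^{1}(\R)$ and $\psi(s)\to0$, dominated convergence gives $(\widecheck a^{-}*\psi)(s)\to0$, hence $(G\psi)(s)\to0$, as $s\to+\infty$; so I would fix $\eps:=\tfrac14(\kap-m)>0$ and choose $S\in\R$ with $\psi(s)(G\psi)(s)\le\eps\psi(s)$ for all $s\ge S$. (For $c\neq0$ we also use $\psi\in C^{1}(\R)$, from Proposition~\ref{prop:reg_trw}; for $c=0$ the displayed relation is purely algebraic.) The target is $\psi(s)\le Ce^{-\mu s}$ for $s\ge S$ with $\mu>0$ depending only on $c,a^{+},m,\kap$, which yields \eqref{eq:trwexpint} for any positive exponent below $\mu$ after splitting $\int_{\R}$ at $S$.

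For $c\le0$ this is immediate at the level of pointwise inequalities. Since $\psi'\le0$, the equation gives $\kap(\widecheck a^{+}*\psi)(s)\le(m+\eps)\psi(s)$ for $s\ge S$, while monotonicity of $\psi$ yields, for any $h>0$,
\[
  (\widecheck a^{+}*\psi)(s)\ge\psi(s+h)\int_{\{y\ge-h\}}\widecheck a^{+}(y)\,dy=(1-\delta_{h})\psi(s+h),\qquad \delta_{h}:=\int_{\{y<-h\}}\widecheck a^{+}(y)\,dy .
\]
As $\delta_{h}\to0$ when $h\to\infty$, one may fix $h$ so large that $K:=\dfrac{m+\eps}{\kap(1-\delta_{h})}<1$; then $\psi(s+h)\le K\psi(s)$ for $s\ge S$, so $\psi(S+nh)\le K^{n}\psi(S)$, and monotonicity gives $\psi(s)\le K^{-1}\psi(S)\,K^{(s-S)/h}$.

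For $c>0$ the term $-c\psi'$ has the wrong sign, so I would first average. Put $W(s):=\int_{s}^{s+h}\psi(r)\,dr$, a decreasing function with $h\psi(s+h)\le W(s)\le h\psi(s)$. Integrating the equation over $[s,s+h]$, using $\psi G\psi\le\eps\psi$ there, the identity $\int_{s}^{s+h}(\widecheck a^{+}*\psi)(r)\,dr=(\widecheck a^{+}*W)(s)$, and $\psi(s)\le h^{-1}W(s-h)$, one gets for $s\ge S$
\[
  \kap(\widecheck a^{+}*W)(s)\le(m+\eps)W(s)+c\bigl(\psi(s)-\psi(s+h)\bigr)\le\Bigl(m+\eps+\tfrac ch\Bigr)W(s-h).
\]
Combining with $(\widecheck a^{+}*W)(s)\ge(1-\delta_{h'})W(s+h')$ and choosing $h$ large enough that $c/h<\tfrac14(\kap-m)$, then $h'$ large enough that $\delta_{h'}<\tfrac{\kap-m}{2\kap}$, one obtains $W(s+h+h')\le K''W(s)$ for $s\ge S$ with $K''<1$; hence $W$, and therefore $\psi$ via $\psi(s+h)\le W(s)/h$, decays exponentially.

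The only genuinely nonlocal ingredient is the convolution $\widecheck a^{+}*\psi$, and the key obstacle is that, unlike in the local (PDE) case, it does not ``localise'': $\widecheck a^{+}*\psi$ is in general of the same size as $\psi$, so inserting $e^{\lambda s}$ directly into the equation produces error terms comparable to the quantity one wants to bound. The device above circumvents this by comparing $\psi$ (respectively $W$) at two points at a fixed distance, using only monotonicity and the tail smallness $\delta_{h}\to0$, so that the effective shift multiplier can be forced strictly below $1$. The case $c>0$ is the delicate one, and the passage to the window‑average $W$ is precisely what neutralises the unfavourable sign of $-c\psi'$; the case $c=0$ falls under the $c\le0$ discussion since the profile equation is then algebraic.
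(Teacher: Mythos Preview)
Your argument is correct and takes a genuinely different, more elementary route than the paper. Both proofs reduce to the one-dimensional profile equation \eqref{eq:trw}, but diverge from there.

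The paper proceeds in two stages. First it shows $\psi\in L^{1}(\R_{+})$ by integrating \eqref{eq:trw} over $[r_{0},r]$, rewriting the convolution terms through the auxiliary kernel $\widecheck J_{\upsilon}=\kap\widecheck a^{+}-\upsilon\kn\widecheck a^{-}$ (nonnegative by \eqref{as:aplus_gr_aminus-intro}), and balancing the resulting integrals via monotonicity. Second, it multiplies \eqref{eq:trw} by the weight $e^{\lambda s}\varphi_{N}(s)$ with a cutoff $\varphi_{N}(s)=\1_{(-\infty,N)}+e^{-\lambda(s-N)}\1_{[N,\infty)}$, integrates, splits the resulting identity into three pieces $I_{1}+I_{2}+I_{3}=0$, and estimates each from below; letting $N\to\infty$ then yields the bound on $\int\psi(s)e^{\mu s}\,ds$.

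Your approach instead extracts a direct geometric decay $\psi(s+h)\le K\psi(s)$ (for $c\le0$) or $W(s+h+h')\le K''W(s)$ (for $c>0$) by comparing the profile at two points a fixed distance apart, using only the tail smallness $\int_{\{y<-h\}}\widecheck a^{+}\to0$ and monotonicity. The window average $W$ is an efficient device to absorb the unfavourable sign of $-c\psi'$ when $c>0$. What you gain is a shorter, self-contained argument that in fact delivers the stronger pointwise bound $\psi(s)\le Ce^{-\mu_{0}s}$ (and, incidentally, makes no explicit use of \eqref{as:aplus_gr_aminus-intro}). What the paper's weighted-integral method offers is a standard template in the travelling-wave literature that transfers readily to settings where a clean one-step iteration is not available, and it keeps the exponent $\mu$ tied transparently to the parameters $\kap,\kam,\theta,c$ through the displayed thresholds.
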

\begin{proof}
At first, we prove that $\psi\in L^1(\R_+)$. 
Under assumptions \eqref{as:chiplus_gr_m} and \eqref{as:aplus_gr_aminus-intro}, define the following function:
\begin{equation}\label{speckern}
  \widecheck{J}_\upsilon(s):=\kap \widecheck{a}^+(s)-\upsilon\kn\widecheck{a}^-(s), \quad s\in\R, \upsilon\in(0,\theta].
\end{equation}
Then, by \eqref{acheckpos}, $\widecheck{J}_\upsilon(s)\geq \widecheck{J}_\theta(s)\geq0$ for $s\in\R$, $\upsilon\in(0,\theta]$.
Since $\int_\R \widecheck{J}_\upsilon(s)\,ds = \kap -\upsilon\kn > m+\kl \upsilon$, one can choose $R_0>0$, such that
\begin{equation}\label{Risproper}
	\int_{-R_0}^{R_0} \widecheck{J}_\upsilon(s)\,ds = m+\kl\upsilon.
\end{equation}
We rewrite \eqref{eq:trw} as follows
\begin{multline}\label{eq:tr_w_ii}
	c\psi '(s) + (\widecheck{J}_\upsilon*\psi)(s) + \bigl(\upsilon-\psi(s)\bigr)\big(\kl\psi(s) +\kn(\widecheck{a}^-*\psi)(s)\big) \\
		- (m+\kl\upsilon)\psi(s) = 0,\quad s\in\R.
\end{multline}
Fix arbitrary ${r_0}>0$, such that
\begin{equation}\label{rhocond}
  \psi({r_0})<\upsilon.
\end{equation}
Let $r>{r_0}+R_0$. Integrate \eqref{eq:tr_w_ii} over $[{r_0},r]$; one gets
\begin{equation}\label{eq:tr_w_ii_int}
c(\psi(r)-\psi({r_0}))+A+B=0,
\end{equation}
where
\begin{align*}
A&:= \int_{{r_0}}^{r}(\widecheck{J}_\upsilon*\psi)(s)\, ds - (m+\kl\upsilon)\int_{{r_0}}^{r}\psi(s)ds,\\
B&:= \int_{{r_0}}^{r}(\upsilon-\psi(s))\bigl(\kl\psi(s) + \kn(\widecheck{a}^-*\psi)(s)\bigl)\,ds.
\end{align*}
By \eqref{speckern}, \eqref{Risproper}, one has
\begin{align}
	A& \geq \int_{r_0}^{r} \int_{-R_0}^{R_0} \widecheck{J}_\upsilon(\tau) \psi(s-\tau)d\tau ds -(m+\kl\upsilon) \int_{r_0}^{r} \psi(s)\,ds\nonumber\\
	&= \int_{-R_0}^{R_0} \widecheck{J}_\upsilon(\tau) \left( \int_{{r_0}-\tau}^{r-\tau}\psi(s)\,ds - \int_{{r_0}}^{r}\psi(s)\,ds \right)\,d\tau\nonumber \\
	&=\int_{0}^{R_0}\widecheck{J}_\upsilon(\tau)\left( \int_{{r_0}-\tau}^{{r_0}}\psi(s)\,ds-\int_{r-\tau}^{r}\psi(s)\,ds \right)\,d\tau\nonumber\\
	&\quad+\int_{-R_0}^{0}\widecheck{J}_\upsilon(\tau)\left( \int_{r}^{r-\tau}\psi(s)\,ds-\int_{{r_0}}^{{r_0}-\tau}\psi(s)\,ds \right)\,d\tau; \label{eq:gen_est12}
\end{align}
and since $\psi$ is a decreasing function and $r-R_0>{r_0}$, we have from \eqref{eq:gen_est12}, that
\begin{align}
 A &\geq (\psi({r_0})-\psi(r-R_0))\int_{0}^{R_0}\tau \widecheck{J}_\upsilon(\tau)\,d\tau+(\psi(r+R_0)-\psi({r_0}))\int_{-R_0}^{0}(-\tau) J_\upsilon(\tau)\,d\tau \notag \\ 
	&\geq -\theta \int_{-R_0}^{0}(-\tau) J_\upsilon(\tau)\,d\tau =:-\theta \bar{J}_{\upsilon,R_0}. \label{eq:gen_est}
\end{align}
Next, \eqref{rhocond} and monotonicity of $\psi$ imply
\begin{equation}\label{B-est}
  B\geq (\upsilon-\psi({r_0})) \int_{{r_0}}^{r} \bigl(\kl\psi(s) + \kn(\widecheck{a}^-*\psi)(s)\bigl) \,ds.
\end{equation}
Then, by \eqref{eq:tr_w_ii_int}, \eqref{eq:gen_est}, \eqref{B-est}, \eqref{rhocond}, one gets
\begin{align*}
  0 \leq (\upsilon-\psi({r_0})) &\int_{{r_0}}^{r} \bigl(\kl\psi(s) + \kn(\widecheck{a}^-*\psi)(s)\bigl) \,ds \\
  &\leq \theta \bar{J}_{\upsilon,R_0} + c(\psi({r_0})-\psi(r)) \to \theta \bar{J}_{\upsilon,R_0} + c\psi({r_0})<\infty, \quad r\to\infty,
\end{align*}
therefore, $\kl\psi + \kn\widecheck{a}^-*\psi\in L^1(\R_+)$. Finally, \eqref{cleareq} implies that there exist a measurable bounded set $\Delta\subset\R$, with $m(\Delta):=\int_\Delta \,ds\in (0,\infty)$, and a constant $\mu>0$, such that $\widecheck{a}^-(\tau)\geq\mu$, for a.a. $\tau\in\Delta$. Let $\delta=\inf \Delta\in\R$. Then, for any $s\in\R$, one has
\begin{equation*}
(\widecheck{a}^-*\psi)(s)\geq \int_\Delta \widecheck{a}^-(\tau) \psi(s-\tau)\,d\tau\geq \mu \psi(s-\delta) m(\Delta).
\end{equation*}
Therefore $\psi\in L^1(\R_+)$.

For any $N\in\N$, we define $\varphi_N(s):=\1_{(-\infty,N)}(s)+e^{-\la(s-N)}\1_{[N,\infty)}(s)$, where $\la>0$. By the proved above, $\psi,\widecheck{a}^\pm*\psi\in L^1(\R_+)\cap L^\infty(\R)$ hence, by \eqref{eq:trw}, $c\psi'\in L^1(\R_+)\cap L^\infty(\R)$. Therefore, all terms of \eqref{eq:trw} being multiplied on $e^{\lambda s}\varphi_{N}(s)$ are~integrable over $\R$. After this integration, \eqref{eq:trw} will be read as follows
\begin{equation}
I_1+I_2+I_3=0,\label{eq:int_trw_zeta_exp}
\end{equation}
where (recall that $\kam \theta-\kap =-m$)
\begin{align*}
I_1&:=c\int_{\R}\psi' (s) e^{\lambda s}\varphi_{N}(s)\,ds,\\
I_2&:=\kap \int_\R\bigl((\widecheck{a}^{+}*\psi)(s)-\psi(s)\bigr)e^{\lambda s}\varphi_{N}(s)\,ds,\\
I_3&:= \int_{\R}\psi(s)\bigl(\kap-m-\kl\psi(s)-\kn(\widecheck{a}^{-}*\psi)(s)\bigr)
e^{\lambda s}\varphi_{N}(s)\,ds
\end{align*}
We estimate now $I_1,I_2,I_3$ from below.

We start with $I_2$. One can write
\begin{align}
&\quad\int_{\R}(\widecheck{a}^{+}*\psi)(s)e^{\lambda s}\varphi_{N}(s)\,ds
=\int_{\R}\int_{\R}\widecheck{a}^{+}(s-\tau)\psi(\tau)e^{\lambda s}\varphi_{N}(s)\,d\tau ds\notag\\
&=\int_{\R}\int_{\R}\widecheck{a}^{+}(s)e^{\lambda s}\varphi_{N}(\tau+s)\,ds\, e^{\lambda \tau}\psi(\tau)\,d\tau\notag\\
&\ge\int_{\R}\biggl(\int_{-\infty}^{R}\widecheck{a}^{+}(s)e^{\lambda s}\,ds\biggr)\varphi_{N}(\tau+R)e^{\lambda \tau}\psi(\tau)\,d\tau,\label{eq:111}
\end{align}
for any $R>0$, as $\varphi$ is nonincreasing. By \eqref{cleareq}, one can choose $R>0$ such that
\[
\int_{-\infty}^{R}\widecheck{a}^{+}(\tau)\,d\tau>1-\dfrac{\kam \theta}{4}.
\]
By continuity arguments, there exists $\nu>0$ such that, for any $0<\la<\nu$,
\begin{equation}\label{eq:222}
  \int_{-\infty}^{R}\widecheck{a}^{+}(\tau)e^{\lambda \tau}\,d\tau\geq\Bigl(1-\dfrac{\kam \theta}{4}\Bigr)e^{\lambda R}.
\end{equation}
Therefore, combining \eqref{eq:111} and \eqref{eq:222}, we get
\begin{align}
I_2&\geq\int _{\R}\Bigl(1-\dfrac{\kam \theta}{4}\Bigr)e^{\lambda R}\varphi_{N}(\tau+R)e^{\lambda \tau}\psi(\tau)\,d\tau-\int _{\R}\psi(s)e^{\lambda s}\varphi_{N}(s)\,ds\notag \\
&=\int_{\R}\Bigl(1-\dfrac{\kam \theta}{4}\Bigr)\varphi_{N}(\tau)e^{\lambda \tau}\psi(\tau-R)\,d\tau-\int _{\R}\psi(s)e^{\lambda s}\varphi_{N}(s)\,ds\notag \\
&\ge-\dfrac{\kam \theta}{4}\int_{\R}\psi(s)e^{\lambda s}\varphi_{N}(s)\,ds,\label{eq:trw_exp_est:i}
\end{align}
as $\psi(\tau-R)\geq\psi(\tau)$, $\tau\in\R$, $R>0$.

Now we estimate $I_3$. By \eqref{eq:deftrw}, it is easily seen that the function $(\widecheck{a}^{-}*\psi)(s)$ decreases monotonically to $0$ as $s\to\infty$.
Suppose additionally that $R>0$ above is such that
\[
	\kl\psi(s) + \kn(\widecheck{a}^{-}*\psi)(s)<\dfrac{\kam\theta}{2}, \quad s>R.
\]
Then, one gets
\begin{align*}
	I_3& \geq \dfrac{\kam\theta}{2} \int_{R}^{\infty} \psi(s)e^{\lambda s}\varphi_{N}(s)\,ds \nonumber \\ 
		&\qquad + \int_{-\infty}^{R}\psi(s)\bigl(\kam\theta-\kl\psi(s)-\kn(\widecheck{a}^{-}*\psi)(s)\bigr)e^{\lambda s}\varphi_{N}(s)\,ds \nonumber \\
		&\geq \dfrac{\kam\theta}{2}\int _{R}^{\infty}\psi (s)e^{\lambda s}\varphi_{N}(s)\,ds,
\end{align*}
as $0\leq\psi\leq\theta$, $\varphi_N\geq0$, $(\widecheck{a}^{-}*\psi)(s)\leq\theta$.

It remains to estimate $I_1$ (in the case $c\neq0$). Since
$\lim\limits_{s\to\pm\infty} \psi(s)e^{\la s}\varphi_N(s) =0$, we have from the integration by parts formula, that
\begin{equation*}
I_1=-c\int_{\R}\psi(s)(\lambda\varphi_{N}(s)+\varphi_{N}'(s))e^{\lambda s}\,ds.
\end{equation*}
For $c>0$, one can use that $\varphi_N'(s)\leq0$, $s\in\R$, and hence
\begin{equation*}
  I_1\geq -c \lambda\int _{\R}\psi(s)\varphi_{N}(s)e^{\lambda s}\,ds.
\end{equation*}
For $c<0$, we use that, by the definition of $\varphi_N$, $\lambda\varphi_{N}(s)+\varphi_{N}'(s)=0$, $s\geq N$; therefore,
\begin{equation}
I_1=-c\lambda\int_{-\infty}^N\psi(s)\,ds>0. \label{eq:trw_exp_est:iv}
\end{equation}

Therefore, combining \eqref{eq:trw_exp_est:i}--\eqref{eq:trw_exp_est:iv}, we get from \eqref{eq:int_trw_zeta_exp}, that
\begin{equation*}
0\geq
-\lambda \bar{c}\int _{\R}\psi(s)\varphi_{N}(s)e^{\lambda s}\,ds -\dfrac{\kam \theta}{4}\int_{\R}\psi(s) e^{\lambda s}\varphi_{N}(s)\,ds +\dfrac{\kam \theta}{2}\int_{R}^{\infty}\psi(s) e^{\lambda s}\varphi_{N}(s)\,ds,
\end{equation*}
where $\bar{c}=\max\{c,0\}$.

The latter inequality can be easily rewritten as
\begin{align}\notag
&\quad \Bigl(\dfrac{\kam \theta}{4}-\lambda \bar{c}\Bigr)\int_{R}^{\infty}\psi (s) e^{\lambda s}\varphi_{N}(s)\,ds\leq \Bigl(\dfrac{\kam \theta}{4} +\lambda \bar{c}\Bigr)\int_{-\infty}^{R}\psi(s)\varphi_{N}(s)e^{\lambda s}\,ds\\
&\leq \Bigl(\dfrac{\kam \theta}{4}+\lambda \bar{c}\Bigr)\theta \int_{-\infty}^{R}e^{\lambda s}\,ds=:I_{\la,R}<\infty, \qquad 0<\la<\nu. \label{eq:333}
\end{align} 

Take now $\mu<\min\bigl\{\nu, \frac{\kam \theta}{4c}\bigr\}$, for $c>0$, and $\mu<\nu$, otherwise. Then, by \eqref{eq:333}, for any $N>R$, one get
\[
\infty>\Bigl(\dfrac{\kam \theta}{4}-\mu \bar{c}\Bigr)^{-1}I_{\mu,R}>\int_{R}^{\infty}\psi (s) e^{\mu s}\varphi_{N}(s)\,ds\geq
\int_{R}^{N}\psi (s) e^{\mu s}\,ds,
\]
thus,
\begin{align*}
\int_{\R}\psi (s) e^{\mu s}\,ds&=\int_{-\infty}^R \psi (s) e^{\mu s}\,ds
+\int_{R}^{\infty}\psi (s) e^{\mu s}\,ds
\\&\leq \theta \int_{-\infty}^R e^{\mu s}\,ds+I_{\mu,R}\Bigl(\dfrac{\kam \theta}{4}-\mu \bar{c}\Bigr)^{-1}<\infty,
\end{align*}
that gets the statement.
\end{proof}

By Proposition~\ref{prop:reg_trw}, a traveling wave solution to \eqref{eq:basic} is continuous in space as well. Because of this, to prove the fourth item of Theorem~\ref{thm:trwexist}, we can use the strong maximum principle. We suppose that $a^+$ is not degenerated in the direction $\xi$ at the origin, namely, there exist $r\geq0$, $\rho,\delta>0$ (depending on $\xi$), such that
						\begin{assum}\label{as:a+nodeg1d}
    					\int_{\{x\cdot\xi=s\}}a^{+}(x)\,dx \geq\rho \text{ \ for a.a. } |s|\leq \delta.
    				\end{assum}
Clearly, either of \eqref{as:aplus-aminus-is-pos1d}, \eqref{as:aplus-aminus-is-pos} or \eqref{as:a+nodeg} implies \eqref{as:a+nodeg1d}.

\begin{proposition}\label{prop:psidecaysstrictly}
 Let \eqref{as:chiplus_gr_m},  \eqref{as:aplus_gr_aminus-intro} and \eqref{as:a+nodeg1d} hold. Let $\psi\in\M$, $c\in\R$, $\xi\in S^{d-1} $ be such that there exists a solution $u\in\Y_\infty$ to the equation \eqref{eq:basic} such that \eqref{eq:deftrw} holds.
Then $\psi$ is a strictly decaying function, for any speed $c$.
\end{proposition}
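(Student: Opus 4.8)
The plan is to reduce to $d=1$ and then argue by contradiction in two stages: first exclude a plateau of $\psi$ at the level $\theta$, and then ``slide'' the profile to upgrade monotonicity to strict monotonicity. By Proposition~\ref{prop:monot_sol} and Remark~\ref{rem:multi-one} it suffices to treat the one-dimensional profile $\psi$ solving \eqref{eq:trw} with the kernels $\widecheck{a}^\pm$ of \eqref{apm1dim}; these satisfy \eqref{acheckpos}, so that $\widecheck{J}_\theta:=\kap\widecheck{a}^+-\kn\theta\widecheck{a}^-\geq0$ with $\int_\R\widecheck{J}_\theta=\kap-\kn\theta=m+\kl\theta>0$ (by \eqref{theta_def}), and the one-dimensional form of \eqref{as:a+nodeg1d} reads $\widecheck{a}^+\geq\rho$ a.e.\ on $[-\delta,\delta]$. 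I recall that $\psi\in C(\R)$, and $\psi\in C^1(\R)$ when $c\neq0$, by Proposition~\ref{prop:reg_trw}. Setting $v:=\theta-\psi\in[0,\theta]$, a direct substitution into \eqref{eq:trw} (using $\int\widecheck{a}^\pm=1$ and cancelling the constant terms by \eqref{theta_def}) gives the identity $cv'(s)+(\widecheck{J}_\theta*v)(s)=q(s)v(s)$ with $q(s):=\kap+\kl\theta-\kn(\widecheck{a}^-*v)(s)-\kl v(s)$, where $m+\kl\theta\leq q(s)\leq\kap+\kl\theta$; I record this at the outset (with the $cv'$ term understood as $0$ when $c=0$).

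\emph{Step 1: $\psi(s)<\theta$ for all $s$.} Suppose $\psi(s_0)=\theta$. By monotonicity and continuity $\{\psi=\theta\}=(-\infty,\beta_0]$ for some finite $\beta_0$, so $v\equiv0$ on $(-\infty,\beta_0]$ and $v>0$ on $(\beta_0,\infty)$. If $c>0$, the identity gives the linear differential inequality $cv'(s)\leq q(s)v(s)$ on $[\beta_0,\infty)$; with $v(\beta_0)=0$, $v\geq0$, a Gronwall argument forces $v\equiv0$ there, hence $v\equiv0$, contradicting $\psi(+\infty)=0$. If $c\leq0$, evaluating the identity on $(-\infty,\beta_0)$ shows $\widecheck{J}_\theta*v\equiv0$ there, which (integrand $\geq0$ and $v>0$ past $\beta_0$) forces $\widecheck{J}_\theta=0$ a.e.\ on $(-\infty,0)$; then for $s\downarrow\beta_0$ one has $(\widecheck{J}_\theta*v)(s)=\int_0^{s-\beta_0}\widecheck{J}_\theta(\sigma)v(s-\sigma)\,d\sigma\leq v(s)\int_0^{s-\beta_0}\widecheck{J}_\theta\to0$ by monotonicity of $v$, while $cv'(s)\leq0$, so dividing the identity by $v(s)>0$ yields $q(s)\to0$, contradicting $q\geq m+\kl\theta>0$.

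\emph{Step 2: sliding.} Assume $\psi$ is not strictly decreasing; then there are $h>0$ and $\sigma^*\in\R$ with $\psi(\sigma^*-h)=\psi(\sigma^*)$. Both $\phi(s,t):=\psi(s-ct)$ and $\phi_h(s,t):=\psi(s-h-ct)$ solve \eqref{eq:basic_one_dim} (the second by $s$-translation invariance), are classical (hence $C^1$ in $t$ on $(0,\infty)$), and satisfy $0\leq\phi\leq\phi_h\leq\theta$; then $W:=\phi_h-\phi\geq0$ solves, as in the derivation of \eqref{lineareqw}, the linear equation $\partial_tW=\widecheck{J}_\theta*W+\kn(\theta-\phi)(\widecheck{a}^-*W)-W(\kl(\phi_h+\phi)+\kn\widecheck{a}^-*\phi_h+m)$. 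Fixing $t_0>0$ and $s_0:=\sigma^*+ct_0$, we get $W(s_0,t_0)=0$ at an interior time, so $\partial_tW(s_0,t_0)=0$, and the equation gives $(\widecheck{J}_\theta*W)(s_0,t_0)+\kn(\theta-\phi(s_0,t_0))(\widecheck{a}^-*W)(s_0,t_0)=0$; both summands are nonnegative (note $\phi\leq\theta$), hence both vanish. By Step~1, $\theta-\phi(s_0,t_0)=\theta-\psi(\sigma^*)>0$, and for a.a.\ $|\sigma|\leq\delta$ either $\widecheck{a}^-(\sigma)>0$ or $\widecheck{J}_\theta(\sigma)=\kap\widecheck{a}^+(\sigma)\geq\kap\rho>0$; combining the two vanishing statements we obtain $W(\cdot,t_0)=0$ a.e.\ on $[s_0-\delta,s_0+\delta]$, hence $\psi(\cdot-h)=\psi(\cdot)$ on $[\sigma^*-\delta,\sigma^*+\delta]$ by continuity. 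Thus $\{\sigma:\psi(\sigma-h)=\psi(\sigma)\}$ is nonempty, closed, and contains a $\delta$-neighbourhood of each of its points, hence is all of $\R$; so $\psi$ is $h$-periodic, thus—being bounded and monotone—constant, contradicting $\psi(-\infty)=\theta\neq0=\psi(+\infty)$. Therefore $\psi$ is strictly decreasing.

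The main obstacle is Step~1: at the level $\theta$ the comparison/maximum-principle mechanism of Step~2 degenerates because the coefficient $\theta-\phi$ vanishes, so the $\theta$-plateau has to be excluded separately, and the argument splits on the sign of $c$ (only for $c>0$ is the clean Gronwall estimate available). A related subtlety, circumvented in Step~2 by also exploiting the $\kn(\theta-\phi)(\widecheck{a}^-*W)$ term, is that the nonnegative part $\widecheck{J}_\theta$ of the kernel may vanish near the origin even though $\widecheck{a}^+$ does not; this is precisely why Theorem~\ref{thm:strongmaxprinciple}, which assumes the stronger \eqref{as:aplus-aminus-is-pos}, cannot be invoked directly here.
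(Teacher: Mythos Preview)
Your proof is correct and, in fact, more careful than the paper's. The paper's argument is a direct sliding: assume a plateau, shift by a small $\delta>0$ to get $\phi^\delta\leq\phi$, and then invoke Theorem~\ref{thm:strongmaxprinciple} (the strong maximum principle) for the one-dimensional equation to conclude $\phi^\delta\equiv\phi$ or $\phi^\delta<\phi$, either of which gives a contradiction. The catch is that Theorem~\ref{thm:strongmaxprinciple} is stated under \eqref{as:aplus-aminus-is-pos}, whose one-dimensional version is \eqref{as:aplus-aminus-is-pos1d}, while Proposition~\ref{prop:psidecaysstrictly} only assumes the weaker \eqref{as:a+nodeg1d}; the paper's citation of the theorem thus hides a small adaptation.

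You make that adaptation explicit. Your Step~2 reproduces the proof of Theorem~\ref{thm:strongmaxprinciple} for the difference $W$, but at the touching point you use \emph{both} nonnegative terms $(\widecheck J_\theta*W)$ and $\kn(\theta-\phi)(\widecheck a^-*W)$: wherever $\widecheck a^-$ is positive the second term forces $W=0$, and where $\widecheck a^-=0$ one has $\widecheck J_\theta=\kap\widecheck a^+\geq\kap\rho>0$ on $[-\delta,\delta]$ by \eqref{as:a+nodeg1d}, so the first term does the job. This works only because $\theta-\phi(s_0,t_0)>0$, which is exactly what your Step~1 secures by ruling out $\psi=\theta$ anywhere; without Step~1 the argument would break down at a $\theta$-plateau, where the coefficient $\theta-\phi$ vanishes. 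The sign-of-$c$ split in Step~1 (Gronwall for $c>0$, support analysis of $\widecheck J_\theta$ for $c\leq0$) is correct and the bounds $m+\kl\theta\leq q\leq\kap+\kl\theta$ follow from $0\leq v,\widecheck a^-*v\leq\theta$ as you indicate. In short: same sliding idea, but your Step~1 and the two-term dichotomy in Step~2 make the argument self-contained under the weaker hypothesis \eqref{as:a+nodeg1d}.
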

\begin{proof}
By Remark~\ref{rem:multi-one}, there exists a traveling wave solution $\phi(s,t)=\psi(s-ct)$ to the equation \eqref{eq:basic_one_dim}.  By~Proposition~\ref{prop:reg_trw}, $\psi\in C(\R)$ and hence $\phi(s,t)=\psi(s-ct)$ is continuous in $s$ as well.
Suppose that $\psi$ is not strictly decaying, then there exists $\delta_0>0$ and $s_0\in\R$, such that $\psi(s)=\psi(s_0)$, for all $|s-s_0|\leq\delta_0$. Take any $\delta\in\bigl(0,\frac{\delta_0}{2}\bigr)$, and consider the function $\psi^\delta(s):=\psi(s+\delta)$. Clearly, $\psi^\delta(s)\leq\psi(s)$, $s\in\R$.
By~Remarks~\ref{shiftoftrw}, \ref{rem:multi-one}, $\psi^\delta$ is a profile for a traveling wave solution to the equation \eqref{eq:basic_one_dim} with the same speed $c$. Therefore, one has two solutions to \eqref{eq:basic_one_dim}: $\phi(s,t)=\psi(s-ct)$ and $\phi^\delta(s,t)=\psi^\delta(s-ct)$ and hence
  $\phi^\delta(s,t)\leq \phi(s,t)$, $s\in\R$, $t\geq0$.
  By the maximum principle for the equation \eqref{eq:basic_one_dim}, see Theorem~\ref{thm:strongmaxprinciple} with $d=1$, either $\phi\equiv \phi^\delta$, that contradicts $\delta>0$ or $\phi^\delta(s,t)< \phi(s,t)$, $s\in\R$, $t>0$. The latter, however, contradicts the equality $\phi^\delta(s,t)=\phi(s,t)$, which holds e.g.\! for $s=s_0+ct$, $ct< \delta_0$. Hence $\psi$ is a strictly decaying function. 
\end{proof}

To prove the last item of Theorem~\ref{thm:trwexist}, one can weaken the assumption \eqref{as:a+nodeg1d}, assuming that $a^+$ is not degenerated in the direction $\xi$ (not necessarily at the origin). Namely, we assume that there exist $r\geq0$, $\rho,\delta>0$ (depending on $\xi$), such that
			\begin{assum}\label{as:a+nodeg-mod}
    					\int_{\{x\cdot\xi=s\}}a^{+}(x)\,dx \geq\rho \text{ \ for a.a.\! $s\in [r- \delta, r+ \delta]$}.
    	\end{assum}
\begin{proposition}\label{prop:trw_willbe_incr}
 Let \eqref{as:chiplus_gr_m},  \eqref{as:aplus_gr_aminus-intro} and \eqref{as:a+nodeg-mod} hold. Let $\psi\in\M$, $c\in\R$, $c\neq0$, $\xi\in S^{d-1} $ be such that there exists a solution $u\in\Y_\infty$ to the equation \eqref{eq:basic} such that \eqref{eq:deftrw} holds.  Then there exists $\nu>0$, such that $\psi(t)e^{\nu t}$ is a strictly increasing function.
\end{proposition}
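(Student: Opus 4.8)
The plan is to combine the strict positivity of $\psi$, a one‑parameter family of exponential sub‑solutions, and the strong maximum principle, in the spirit of Proposition~\ref{prop:psidecaysstrictly}. Since $c\neq0$, Corollary~\ref{cor:infsmoothprofile} gives $\psi\in C_b^\infty(\R)$, and by Remark~\ref{rem:multi-one} we may argue with the one‑dimensional equation \eqref{eq:basic_one_dim}, for which $\phi(s,t)=\psi(s-ct)$ is a solution and $\psi$ satisfies the profile equation \eqref{eq:trw}. First I would show that $\psi(s)>0$ for every $s\in\R$: otherwise, since $\psi\in\M$ decreases to $0$, we would have $\psi\equiv0$ on a half‑line $[s_0,\infty)$, so \eqref{eq:trw} would force $(\widecheck{a}^+*\psi)(s)=0$ for all $s>s_0$; but for $s$ slightly larger than $s_0$ the non‑degeneracy \eqref{as:a+nodeg-mod} together with the positivity of $\psi$ to the left of $s_0$ make $(\widecheck{a}^+*\psi)(s)$ strictly positive, a contradiction.

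Next, for $\nu>0$ and $h>0$ put $\bar\phi_h(s,t):=e^{-\nu h}\psi(s-ct)$. A direct computation using \eqref{eq:trw} yields
\begin{equation*}
(\mathcal{F}\bar\phi_h)(s,t)=\bigl(e^{-2\nu h}-e^{-\nu h}\bigr)\Bigl[\kl\psi^2(s-ct)+\kn\psi(s-ct)(\widecheck{a}^-*\psi)(s-ct)\Bigr],
\end{equation*}
which is strictly negative because $e^{-\nu h}\in(0,1)$, $\psi>0$ and $\kam=\kl+\kn>0$; thus $\bar\phi_h$ is a $t$‑smooth strict sub‑solution of \eqref{eq:basic_one_dim} with $0<\bar\phi_h\leq\theta$, while $u_h(s,t):=\psi(s+h-ct)$ is a solution (Remark~\ref{shiftoftrw}). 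The assertion is equivalent to $\psi(s+h)>e^{-\nu h}\psi(s)$ for all $s\in\R$, $h>0$, i.e.\ to the strict monotonicity of $s\mapsto\psi(s)e^{\nu s}$. I would obtain the non‑strict inequality by sliding in $h$: set $h^*:=\sup\{H\geq0:\psi(\cdot+h)\geq e^{-\nu h}\psi\text{ on }\R\text{ for all }h\in[0,H]\}$. Near $s=-\infty$ the inequality holds with slack, since $\psi(s+h)\to\theta$ while $e^{-\nu h}\psi(s)\to e^{-\nu h}\theta<\theta$; near $s=+\infty$ it holds with slack once $\nu$ exceeds the logarithmic decay rate of $\psi$ (see the last paragraph). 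Hence, if $h^*<\infty$, then $\psi(\cdot+h^*)\geq e^{-\nu h^*}\psi$ with equality at some finite point $s^*$.

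To derive a contradiction, observe that $\bar\phi_{h^*}(\cdot,0)\leq u_{h^*}(\cdot,0)$, hence $\bar\phi_{h^*}\leq u_{h^*}$ for all $t\geq0$ by Proposition~\ref{compprabscont}; moreover along the line $\{s=s^*+ct\}$ we have $\bar\phi_{h^*}(s^*+ct,t)=e^{-\nu h^*}\psi(s^*)=\psi(s^*+h^*)=u_{h^*}(s^*+ct,t)$, so $w:=u_{h^*}-\bar\phi_{h^*}\geq0$ vanishes identically along that line. As in the proof of Theorem~\ref{thm:strongmaxprinciple}, $w$ satisfies a linear equation, and at a point $(s^*+ct_0,t_0)$ with $t_0>0$, where $\partial_t w=\partial_s w=0$, it reduces to $\kap(\widecheck{a}^+*w)-\kn\bar\phi_{h^*}(\widecheck{a}^-*w)=(\mathcal{F}\bar\phi_{h^*})(s^*+ct_0,t_0)<0$; but $\bar\phi_{h^*}\leq\theta$, $w\geq0$ and \eqref{acheckpos} give $\kap(\widecheck{a}^+*w)-\kn\bar\phi_{h^*}(\widecheck{a}^-*w)\geq(\widecheck{J}_\theta*w)\geq0$, a contradiction. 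Therefore $h^*=\infty$ and $\psi(t)e^{\nu t}$ is non‑decreasing; strictness follows by the same argument applied to any $h_1>0$ for which $\psi(s_1+h_1)=e^{-\nu h_1}\psi(s_1)$ at some $s_1$, which again produces a touching point for the strict sub‑solution $\bar\phi_{h_1}$ and the solution $u_{h_1}$ and hence a contradiction.

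The step I expect to be the main obstacle is the choice of $\nu$, i.e.\ bounding the logarithmic decay rate $-\psi'/\psi$ near $+\infty$. For $c<0$ this is immediate: since $\psi'\leq0$, \eqref{eq:trw} gives $\kap(\widecheck{a}^+*\psi)(s)\leq\psi(s)\bigl(m+\kl\psi(s)+\kn(\widecheck{a}^-*\psi)(s)\bigr)\leq(m+\kam\theta)\psi(s)$, whence $-\psi'(s)/\psi(s)\leq(m+\kam\theta)/|c|$ on all of $\R$; any $\nu>(m+\kam\theta)/|c|$ then makes the sliding above work without leakage at $+\infty$, and in fact directly $\psi'(s)+\nu\psi(s)>0$. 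For $c>0$ the bound on $-\psi'/\psi$ near $+\infty$ is equivalent to the boundedness of $(\widecheck{a}^+*\psi)(s)/\psi(s)$ there, i.e.\ to $\psi$ not decaying faster than a fixed exponential. I would establish this by combining the exponential integrability of $\psi$ from Proposition~\ref{prop:trw_exp_est} (which already yields an exponential upper bound for $\psi$) with the non‑degeneracy \eqref{as:a+nodeg-mod} — integrating \eqref{eq:trw} over bounded intervals and using $(\widecheck{a}^+*\psi)(s)\geq2\delta\rho\,\psi(s-r+\delta)$ — and, if necessary, with an auxiliary exponential sub‑solution $q\,e^{-\nu(s-ct)}$, which is a sub‑solution on a right half‑line whenever $\kap\A_\xi(\nu)-m-c\nu>0$ and $\A_\xi(\nu)<\infty$ (cf.~\eqref{aplusexpla}), to exclude super‑exponential decay. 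Once $-\psi'/\psi$ is bounded on $\R$, any $\nu$ above that bound completes the argument and yields the stated strict monotonicity of $\psi(t)e^{\nu t}$.
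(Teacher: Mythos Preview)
Your treatment of the case $c<0$ is correct and is essentially the paper's argument (the paper gets the slightly sharper threshold $\nu>(m+\kl\theta)/|c|$ by using \eqref{as:aplus_gr_aminus-intro} to discard $\kap\widecheck a^+*\psi-\kn\theta\widecheck a^-*\psi\geq0$, but your bound $\nu>\kap/|c|$ works just as well).

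For $c>0$, however, the proposal has a genuine gap at exactly the point you flag as ``the main obstacle''. Your sliding scheme needs $h^*>0$, and for any $h>0$ small, the inequality $\psi(s+h)\geq e^{-\nu h}\psi(s)$ linearises to $\psi'(s)+\nu\psi(s)\geq0$; so to even start the sliding you already need a global bound on $-\psi'/\psi$. Conversely, once $-\psi'/\psi\leq\nu$ on all of $\R$ (and note that on any left half-line this is automatic because $\psi\in C_b^\infty$ and $\psi\geq\psi(S)>0$ there), the conclusion $\bigl(\psi e^{\nu\cdot}\bigr)'>0$ is immediate and the sliding/strong-max-principle machinery is never used. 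The suggested ingredients---the exponential upper bound from Proposition~\ref{prop:trw_exp_est} and an exponential sub-solution $qe^{-\nu(s-ct)}$---do not combine into the needed Harnack-type control: the sub-solution is unbounded on the left, so the comparison principle (which requires both functions in $[0,\theta]$) cannot be invoked to produce a lower bound on $\psi$, and an upper bound alone says nothing about $(\widecheck a^+*\psi)/\psi$.

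What the paper does for $c>0$ is precisely supply this Harnack inequality directly from \eqref{eq:trw}. Fix $\mu\geq\kap/c$; then $c\psi'+\kap(\widecheck a^+*\psi)-c\mu\psi\leq0$, so $w(s):=\psi(s)e^{-\mu s}$ satisfies $cw'(s)+\kap\int_\R\widecheck a^+(\tau)e^{-\mu\tau}w(s-\tau)\,d\tau\leq0$, hence $w$ is decreasing. Integrating this over $[t,t+\varrho]$ and $[t-\varrho,t]$ for a suitable $\varrho>0$ chosen from \eqref{as:a+nodeg-mod}, and using monotonicity of $w$, yields a two-sided bound of the form $w(t-\varrho)\leq Cw(t)$ with an explicit constant. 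Feeding this back into \eqref{eq:trw} gives a uniform bound on $(\widecheck a^+*\psi)/\psi$, hence on $-\psi'/\psi$, and the result follows. This Harnack step is the missing idea in your proposal.
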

\begin{proof}
We start from the case $c>0$. Since $\psi(t)>0,\ t\in\R$, it is sufficient to prove that
  \begin{equation}\label{weneed}
    \frac{\psi'(t)}{\psi(t)}> -\nu,\quad t\in\R.
  \end{equation}
  Fix any $\mu\geq\dfrac{\kap }{c}>0$. Then, clearly,
  \[
  	\kl\psi^2(t) + \kn(\widecheck{a}^- *\psi)(t)+m\leq \kam \theta+m=\kap \leq c\mu,
  \]
  and we will get from \eqref{eq:trw}, that
  \begin{equation}
0\geq c\psi'(s)+\kap (\widecheck{a}^+ *\psi)(s)-c\mu\psi(s), \quad s\in\R.
\label{ineq1}
\end{equation}
Multiply both parts of \eqref{ineq1} on $e^{-\mu s}>0$ and set
\[
w(s):=\psi(s)e^{-\mu s}>0, \quad s\in\R.
\]
Then $w'(s)=\psi'(s)e^{-\mu s}-\mu w(s)$ and one can rewrite \eqref{ineq1} as follows
  \begin{align}
0&\geq c w'(s)+\kap (\widecheck{a}^+ *\psi)(s)e^{-\mu s}\notag\\&=c w'(s)+\kap \int_\R\widecheck{a}^+ (\tau)w(s-\tau)e^{-\mu \tau}d\tau, \quad s\in\R.
\label{eq:w_est}
\end{align}

By \eqref{as:a+nodeg-mod}, there exists $\varrho:=\frac{r}{2}+\frac{\delta}{4}>0$, such that
\begin{equation}\label{intfrom2rhoispos}
\int_{2\varrho}^{\infty}\widecheck{a}^+ (s)e^{-\mu s}ds>0.
\end{equation}
Integrating \eqref{eq:w_est} over $s\in[t,t+\varrho]$, one gets
  \begin{equation}\label{asd3}
  0 \geq c(w(t+\varrho)-w(t))+\kap \int_{t}^{t+\varrho}\int_\R\widecheck{a}^+ (\tau)w(s-\tau)e^{-\mu \tau}d\tau ds.
\end{equation}
Since $w(t)$ is a monotonically decreasing function, we have
  \begin{align}
     \int_{t}^{t+\varrho}\int_\R\widecheck{a}^+ (\tau)w(s-\tau)e^{-\mu \tau}d\tau ds     &\geq \varrho\int_\R\widecheck{a}^+ (\tau)w(t+\varrho-\tau)e^{-\mu \tau}d\tau \nonumber \\
      \geq \varrho \int_{2\varrho}^{\infty}\widecheck{a}^+ (\tau)w(t+\varrho-\tau)e^{-\mu \tau}d\tau &\geq \varrho w(t-\varrho)\int_{2\varrho}^{\infty}\widecheck{a}^+ (\tau)e^{-\mu \tau}d\tau.\label{asd4}
  \end{align}
We set, cf. \eqref{intfrom2rhoispos},
\[
C(\mu,\rho):=\dfrac{\kap }{c}\int_{2\varrho}^{\infty}\widecheck{a}^+ (s)e^{-\mu s}ds>0.
\]
Then \eqref{asd3} and \eqref{asd4} yield
  \begin{equation}\label{eq:ln_fy_est_i}
    w(t) -\varrho C(\mu,\rho)w(t-\varrho)\geq w(t+\varrho)>0,\quad t\in\R.
  \end{equation}

  Now we integrate \eqref{eq:w_est} over $s\in[t-\varrho,t]$. Similarly to above, one gets
  \begin{align}
    0 &\geq c(w(t)-w(t-\varrho))+\kap \int_{t-\varrho}^{t}\int_\R\widecheck{a}^+ (\tau)w(s-\tau)e^{-\mu \tau}d\tau ds \nonumber \\
      &\geq c(w(t)-w(t-\varrho))+\varrho\kap \int_\R\widecheck{a}^+ (\tau)w(t-\tau)e^{-\mu \tau}d\tau.\label{asda12}
  \end{align}
  By \eqref{eq:ln_fy_est_i} and \eqref{asda12}, we have
  \begin{equation}\label{eq:ln_fy_est_ii}
  \frac{1}{\varrho C(\mu,\rho)}\geq \dfrac{ w(t-\varrho)}{w(t)}\geq 1 +\dfrac{\varrho\kap }{c}\int_\R\widecheck{a}^+ (\tau)\dfrac{w(t-\tau)}{w(t)}e^{-\mu \tau}d\tau.
  \end{equation}
  On the other hand, \eqref{eq:trw} implies that
  \begin{equation}
-\frac{\psi'(t)}{\psi(t)}\leq \frac{\kap }{c}\frac{(\widecheck{a}^+ *\psi)(t)}{\psi(t)}=\dfrac{\kap }{c}\int_\R\widecheck{a}^+ (\tau)\dfrac{w(t-\tau)}{w(t)}e^{-\mu \tau}d\tau, \quad t\in\R.\label{eq:ln_fy_est_ii11}
\end{equation}
Finally, \eqref{eq:ln_fy_est_ii} and \eqref{eq:ln_fy_est_ii11} yield \eqref{weneed} with $\nu=\dfrac{1}{\rho^2 C(\mu,\rho)}>0$.

Let now $c<0$. For any $\nu\in\R$, one has
\begin{equation*}
\psi'(s)=e^{-\nu s}(\psi(s)e^{\nu s})'-\nu \psi(s),\quad s\in\R.
\end{equation*}
Hence, by \eqref{eq:trw}, \eqref{as:aplus_gr_aminus-intro},
\begin{align*}
	0=& ce^{-\nu s}(\psi(s)e^{\nu s})'-c\nu\psi(s)+\kap(\widecheck{a}^+ *\psi)(s) \\ 
	 &\qquad \qquad \qquad \qquad \qquad \qquad -\kl\psi^2(s)-\kn\psi(s)(\widecheck{a}^- *\psi)(s)-m\psi(s)\nonumber \\
	\geq& ce^{-\nu s}(\psi(s)e^{\nu s})'-c\nu\psi(s) +\kap(\widecheck{a}^+ *\psi)(s) \\
	 &\qquad \qquad \qquad \qquad \qquad \qquad - \kl\theta\psi(s) -\kn\theta(\widecheck{a}^- *\psi)(s)-m\psi(s)\nonumber \\
	\geq& ce^{-\nu s}(\psi(s)e^{\nu s})'-c\nu\psi(s) -\kl \theta \psi(s) -m\psi(s),\quad s\in\R.
\end{align*}
As a result, choosing $\nu>\dfrac{m+\kl\theta}{-c}$, one gets
\begin{equation*}
-c e^{-\nu s}(\psi(s)e^{\nu s})'\geq(-c\nu-\kl\theta -m)\psi(s)>0,\quad s\in\R,
\end{equation*}
i.e. $\psi(s)e^{\nu s}$ is an increasing function.
\end{proof}
\color{black}

Combining Propositions \ref{prop:trwexists}, \ref{prop:reg_trw}, \ref{prop:trw_exp_est}--\ref{prop:trw_willbe_incr} and Corolalry~\ref{cor:infsmoothprofile}, we prove Theorem~\ref{thm:trwexist}.

\section*{Acknowledgments}
Authors gratefully acknowledge the financial support by the DFG through CRC 701 ``Stochastic
Dynamics: Mathematical Theory and Applications'' (DF, YK, PT), the European
Commission under the project STREVCOMS PIRSES-2013-612669 (DF, YK), and the ``Bielefeld Young Researchers'' Fund through the Funding Line Postdocs: ``Career Bridge Doctorate\,--\,Postdoc'' (PT).

\end{document}